\newcommand{\mybinom}[3][0.8]{\scalebox{#1}{$\dbinom{#2}{#3}$}}
\DeclareMathOperator*{\argmin}{arg\,min}
\begin{document}

\title*{Modelling interactions between active and passive agents moving through heterogeneous environments}
\titlerunning{Modelling interactions between active (\dots) through heterogeneous environments} 
\author{Matteo Colangeli, Adrian Muntean, Omar Richardson and Thoa Thieu}
\institute{Matteo Colangeli\at Universit\`{a} degli Studi dell'Aquila,  Via Vetoio, 67100 L’Aquila, Italy; \email{matteo.colangeli1@univaq.it} \and Adrian Muntean \at Karlstad University, Universitetsgatan 2, Karlstad, Sweden; \email{adrian.muntean@kau.se}
\and Omar Richardson \at Karlstad University, Universitetsgatan 2, Karlstad, Sweden; \email{omar.richardson@kau.se}\and Thieu Thi Kim Thoa \at Gran Sasso Science Institute, Viale Francesco Crispi 7, 67100 LAquila, Italy; \email{thikimthoa.thieu@gssi.it}}

\maketitle

\abstract{We study the dynamics of interacting agents from two distinct inter-mixed  populations: One population includes active agents that follow a predetermined velocity field, while the second population contains exclusively passive agents, i.e. agents that have no preferred direction of motion. The orientation of their local velocity is affected by repulsive interactions with the neighboring agents and environment. We present two models that allow for a qualitative analysis of these mixed systems. We show that the residence times of this type of systems containing mixed populations is strongly affected by the interplay between these two populations. After showing our modeling and simulation results, we conclude with a couple of mathematical aspects concerning the well-posedness of our models.}

Key words: Crowd dynamics; lattice gas model; fire and smoke dynamics; particle methods; heterogeneous domains.

PACS: 02.70.Uu, 07.05.Tp, 05.06.-k.

MSC 2010: 65Z05, 82C80, 91E30.

\section{Introduction}

Unlike fluid flows, pedestrian flows are rarely uniform. Hence, their motion is difficult to predict accurately. The main source of non-uniformity stems from the fact that pedestrian flows are \textquotedblleft thinking flows\textquotedblright, i.e., both agent-agent interactions and agent-structure interactions are always active and are much more complex than the standard Van der Waals-like (attraction-repulsion) interactions which govern to a large extent the molecular description of fluids and gases. In this framework, we consider a particular type of non-uniformity. Looking at a heterogeneous environment (e.g. a complex office building), we consider our target pedestrian flow to contain the dynamics of interacting agents from two distinct populations: 
\begin{itemize}
    \item \textit{active agents}, knowing where to go (they are aware of a predetermined optimal velocity field leading towards the exits), 
 \item \textit{passive agents}, randomly exploring the environment (they have no information about the exit routes, but base their motion on interaction with other agents). 
\end{itemize}
We are particularly interested in investigating  what mechanisms can be responsible for  the  minimization of the residence time of the pedestrians when an emergency evacuation situation has occurred, for instance, due to the unexpected occurrence of a fire that produces a significant amount of smoke. Our standing assumption is that the use of a purely macroscopic crowd model, which encodes the motion of a uniform flow, is prone to underestimate the residence time and does not properly capture crowd interaction.

In this chapter, we present conceptually different crowd dynamics models that describe the joint evolution of such passive and active agents. 
One of the models employs systems of nonlinear stochastic differential equations of motion one-way coupled with the diffusive-convective dynamics of the smoke, while an other model is a lattice-gas-type approach based on a Monte Carlo stochastic dynamics.  
Both models give estimates of the residence time of the particles as well as of the local occupancy (local pedestrian densities).  
When treating such scenarios, the complexity of the work is high. 
One of the difficulties is the handling of agent-structure interaction. 
It is worth noting that even if all agents were active and their wanted path is known {\em a priori}, if their number is sufficiently high, given a certain prescribed internal geometry of the facility, the agent-agent and agent-structure interactions normally lead to clogging or to the faster-is-slower effect; see e.g. \cite{Zuriguel2014} and \cite{Garcimartin2015}.
Another difficulty is to handle the presence of the fire, and consequently, of the smoke and of the increased discomfort the agents feel. 
We refer the reader to \cite{OmarMSC} for one possible way of treating the presence of obstacles and to \cite{Omar2017} for hints on how to introduce the fire physics in the evolution equations describing the dynamics of the crowd. 
In this framework, we focus exclusively on the effect of knowledge of the geometry on the actual dynamics of the agents.


After reviewing a number of relevant related contributions, we proceed with the description of  two closely-related modeling scenarios where the type of models previously mentioned apply. Then we solve the models numerically and illustrate the typical behavior of the output: positions, residence times, discomfort values, etc.  We also discuss a few basic aspects concerning the mathematical well-posedness of one crowd model related to Model 1.  
We close the chapter with a discussion section where we also include hints towards further potential contributions in this context.
The results reported here should be seen as preliminary. More efforts are currently invested to develop these research directions. 

 





\section{Related contributions}
\label{sec:related_contributions}
Escape evacuation and social human behaviour are closely connected. In an emergency situation, building occupants require information about the surrounding environment and social interactions in order to evacuate successfully. The experiments in \cite{Horiuchi1986} can serve as a typical example for the relevance of distinguishing between two groups of occupants: regular users of the building and those less familiar with it. 

In the model presented in \cite{Chu2014}, the building occupants are modelled
as agents who decide their evacuation actions on the basis of their infrastructural knowledge and their interactions with the social groups and the neighboring crowd therein. The authors showed that both familiar agents with the geometry building and social influence can dramatically impact on egress performance. 
As a multi-agent evacuation simulation tool, the ESCAPES system (presented in \cite{Tsai2011}) describes a realistic spread of knowledge to model two types of different knowledge: exit knowledge together with event knowledge.
The conclusions made based on these models are supported by experimental findings such as those reported in \cite{ronchi17}, where an evacuation was performed and the exit choice of participants was investigated, as well as the effect of the evacuation geometry.

Commonly, agent-based crowd models are based on developing individual trajectories.
Yet for dense crowds, additional dynamics come into play. This has been observed in, for instance, \cite{corbetta14}, where the interaction in dense crowds has been measured and analyzed, obtaining statistics for aggregate dynamics.
These \emph{macroscopic} properties have been observed from a theoretical perspective as well in e.g. \cite{luding07}.
One way to bridge the gap between models for regular and dense crowds is to use models defined on different spatial scales, giving rise to a so-called multiscale model. In \cite{cristiani11}, a multiscale model is proposed in terms of a granular flow formulation to display both microscopic as well as macroscopic crowd behaviour. For an investigation of handling contacts in such flows of granular matters applied to crowds, we refer the reader to the work of Maury and co-authors, compare  \cite{Faure2015}. All these papers assume that the exits are visible. For study cases when the walking environment is not visible due to the lack of light, we refer the reader to \cite{Cialela}.  There the main question is whether the grouping of the agents (involving higher coordination costs and information overload) has  a chance to favorize an eventually  quicker evacuation. From a different perspective, interesting connections to crisis management issues are made in \cite{Bellomo2016_1} and references cited therein. 

We refer the reader to further related contributions on modeling crowd dynamics as reported, for instance, in \cite{Nguyen2013}, \ \cite{Nguyen2015}, \ \cite{Anh2012}, as well as in \cite{Bellomo2015} \ and \cite{Colombo2015_2}.  

\section{Agent-based dynamics (Model 1)}
\label{sec:model_1}
In this section we introduce an agent-based model in a continuous two-dimensional multiple connected region $\Omega$, containing obstacles with a fixed location, a fire that produces smoke, and an exit. 
$\Omega$ represents the environment in which the crowd is present and tries to find the fastest way to the exit, avoiding any obstacles and the fire. The crowd is represented by the two aforementioned groups, active and passive agents. At time $t=0$, the crowd starts to evacuate from $\Omega$. In the rest of this section, $\Omega$ refers to the geometry displayed in Figure~\ref{fig:example}.

\begin{figure}[ht]
    \centering
    \includegraphics[width=0.5\textwidth]{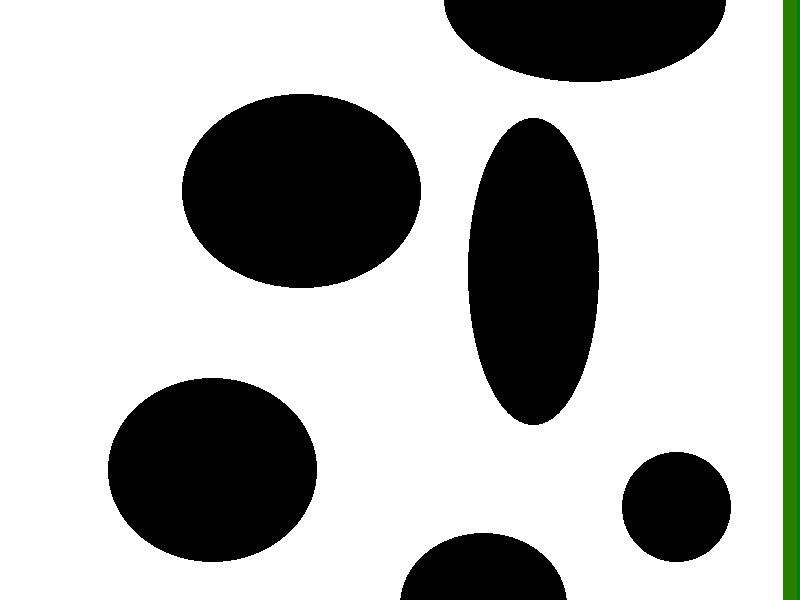}
    \caption{Basic geometry for our case study cf. Model 1. Agents are initialized in a random location within the geometry and have to reach the exit (green) while avoiding the obstacles (black).}
    \label{fig:example}
\end{figure}

Active agents have a perfect knowledge of the environment and the locations of the obstacles, but are not aware of the location of the fire prior to experiencing sensory cues. Passive agents have no information on the environment and follow their neighbours to reach the exit.
A similar model as the one described below has been presented in \cite{Omar2017}.

Active and passive agents are seen as members from the sets $X_A = \{a_1,...,a_{N_A}\}$ and $X_B = \{b_1,...,b_{N_B}\}$, respectively.
The dynamics governing their motion are described in the following sections. 
\subsection{Active agents}

The motion of the active agents is governed by a potential field model proposed by Hughes in \cite{hughes02} and adapted in \cite{treuille06}. It functions similarly to a floor field function, its counterpart in lattice models presented in e.g. \cite{tan15} and \cite{cao2014}. We modify the potential field model to account for the presence of obstacles and the effects of fire and smoke.

The potential field agrees with the principle of \emph{minimization of effort}, serving as a dynamic generalized distance transform. Let $\vec{x}$ be an arbitrary point selected in $\Omega$. We introduce a \emph{marginal cost field} $u(\vec{x})>0$, defined as
\[ u(\vec{x}) = \alpha + u_{\mathrm{obs}}(\vec{x}) + wH(\vec{x}).\]
The marginal cost field represents the effort of moving through a certain location and consists of a base level of constant walking effort $\alpha>0$, information on the geometry and the obstacles $u_{\mathrm{obs}}$, and information on the fire source $wH$. Here, $w$ takes value 1 if the agent is aware of the location of the fire and 0 otherwise.

Let $S$ be a path going from point $\vec{x}_p$ to point $\vec{x}_q$. Then the effort of walking on the path $S$ can be expressed as
\[\int_S u(\vec{\xi})d\vec{\xi}=\int_S\alpha + u_{\mathrm{obs}}(\vec{\xi}) + wH(\vec{\xi})d\vec{\xi}.\]
At the beginning of the simulation, $w$ is 0 for all agents. When an active agent experiences a significant increase in temperature because of his proximity to the location of the fire, $w$ is set to 1 and $S$ changes, and as a result, the fire is avoided.

Let $G\subset \Omega$ be the set of all inaccessible locations in the geometry (i.e. those parts of $\Omega$ covered by obstacles). Then for all $\vec{x}\in \Omega$, the geometry information (i.e. the obstacle cost field) can be expressed as
\begin{equation}
    u_{\mathrm{obs}}(\vec{x}) =
    \begin{cases}
        \infty &\mbox{if }\vec{x} \in G,\\
        \frac{1}{|d(\vec{x},G)|} & \mbox{if }\vec{x} \notin G\mbox{ and }d(\vec{x},G) \leq r_G,\\
        0 &\mbox{if } d(\vec{x},G) > r_G,
    \end{cases}
    \label{eq:pot_obs}
\end{equation}
where $r_G$ is a parameter of the order of the size of the agents. The obstacle cost makes sure that obstacle locations are inaccessible, and $r_G$ adds a tiny layer of repulsion around each obstacle to ensure the basic fact that agents do not run into walls.

The preferred path $S^*$ for an agent with location $\vec{x}_p$ and motion target $\vec{x}_q$ is determined as
\begin{equation*}
    S^* = \argmin_S \int_S u(\vec{\xi})d\vec{\xi},
\end{equation*}
where we minimize over the set of all possible motion paths $S$ from $\vec{x}_p$ to $\vec{x}_q$.
In this framework, the active agents are aware of all exits, and the optimal path $S^*$ is made available by means of the potential function $\Phi$, a solution to the equation
\begin{equation}
    \left|\left|\nabla \Phi(\vec{x})\right|\right| = u(\vec{x}),
    \label{eq:eikonal}
\end{equation}
where $||\cdot||$ denotes the standard Euclidean norm. Passive agents do not have access to the optimal paths.

Figure~\ref{fig:potential_standard} and Figure~\ref{fig:evac_path_standard} display the potential field and the corresponding paths for our case study.
Figure~\ref{fig:potential_fire} and Figure~\ref{fig:evac_path_fire} display the adaption active agents make as soon as they become aware of the fire locations and take an alternative route out.

\begin{figure}[ht]
\centering
\begin{minipage}{.5\textwidth}
        \centering
    \includegraphics[width=\textwidth]{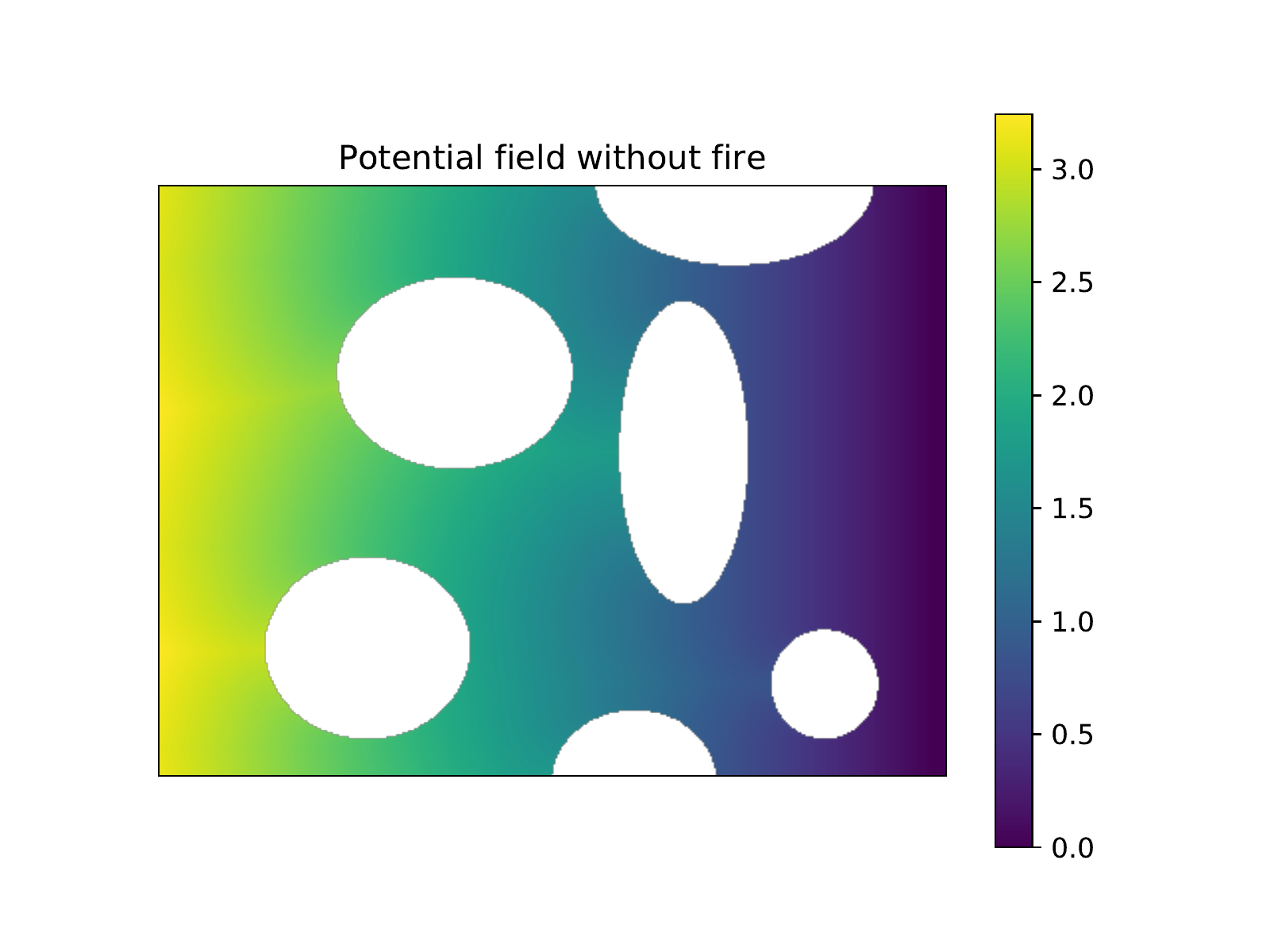}
    \caption{Potential field $\Phi$ for the environment of the case study, not taking any fire into account.}
    \label{fig:potential_standard}
\end{minipage}%
\hfill
\begin{minipage}{.4\textwidth}
        \centering
        \includegraphics[width=\textwidth]{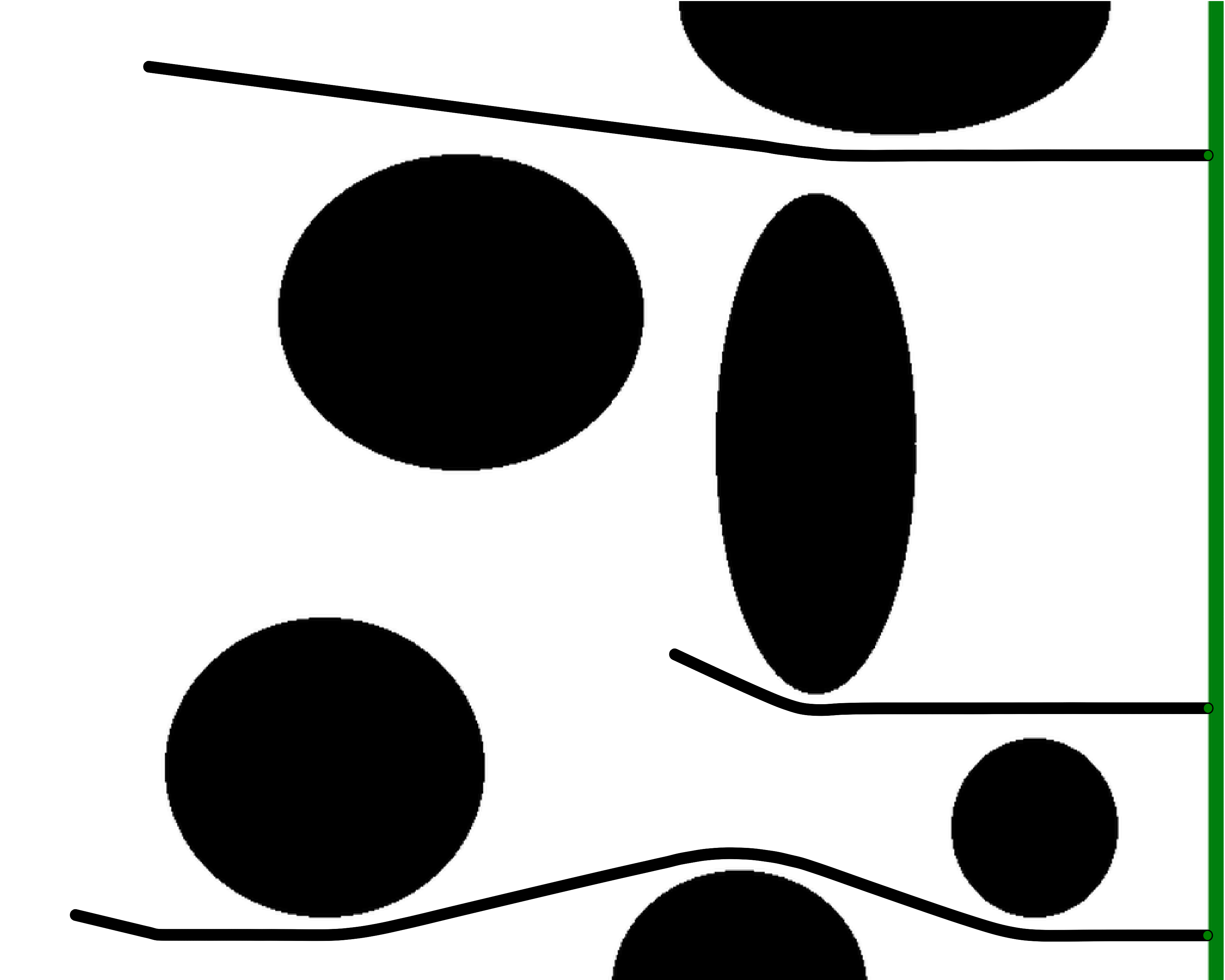}
        \caption{Paths generated from the potential field in Figure~\ref{fig:potential_standard}.}
        \label{fig:evac_path_standard}
\end{minipage}
\end{figure}

\begin{figure}[ht]
\centering
\begin{minipage}{.5\textwidth}
        \centering
    \includegraphics[width=\textwidth]{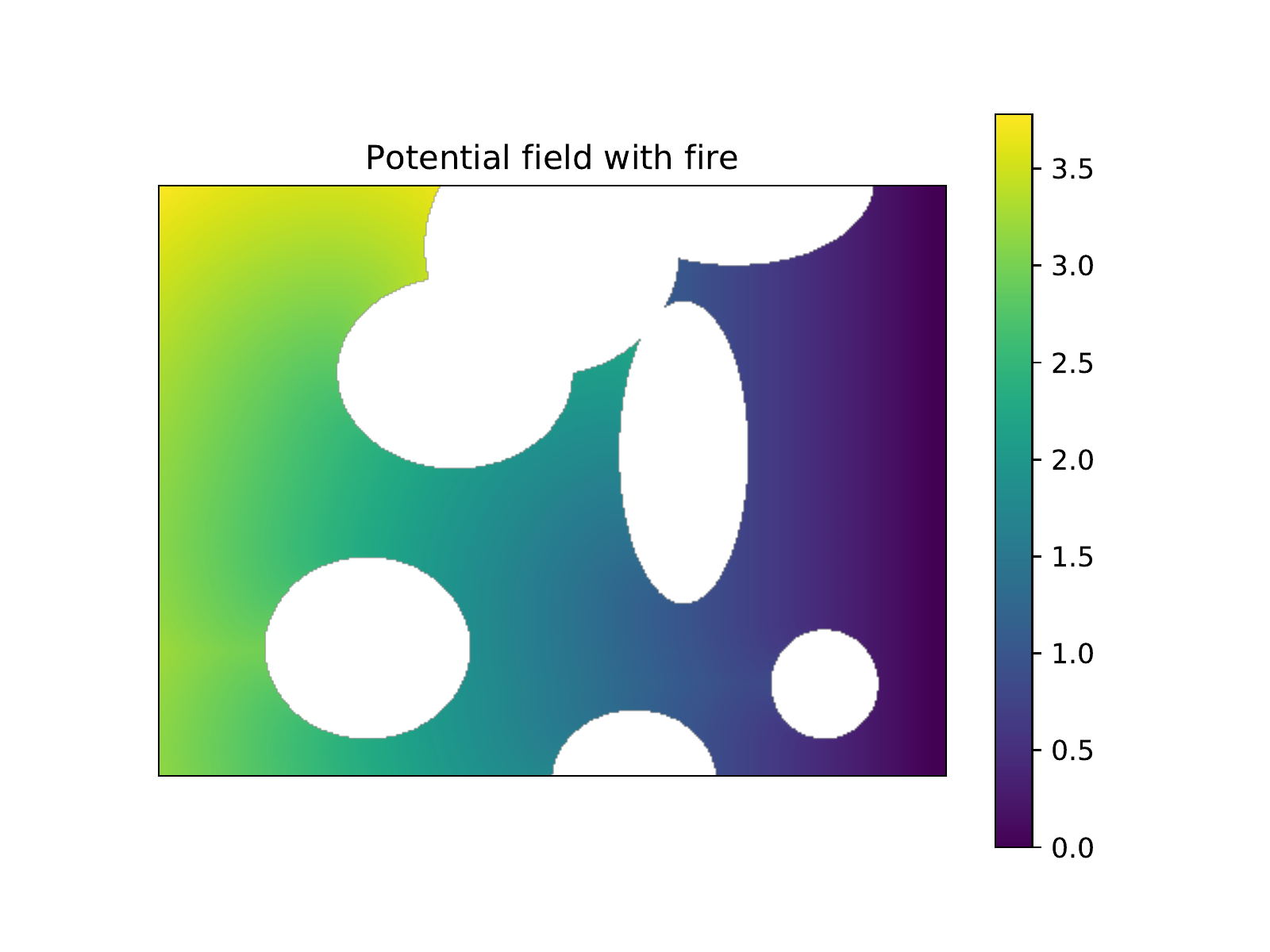}
    \caption{Potential field $\Phi$ for the environment of the case study for agents aware of the fire location.}
    \label{fig:potential_fire}
\end{minipage}%
\hfill
\begin{minipage}{.4\textwidth}
        \centering
        \includegraphics[width=\textwidth]{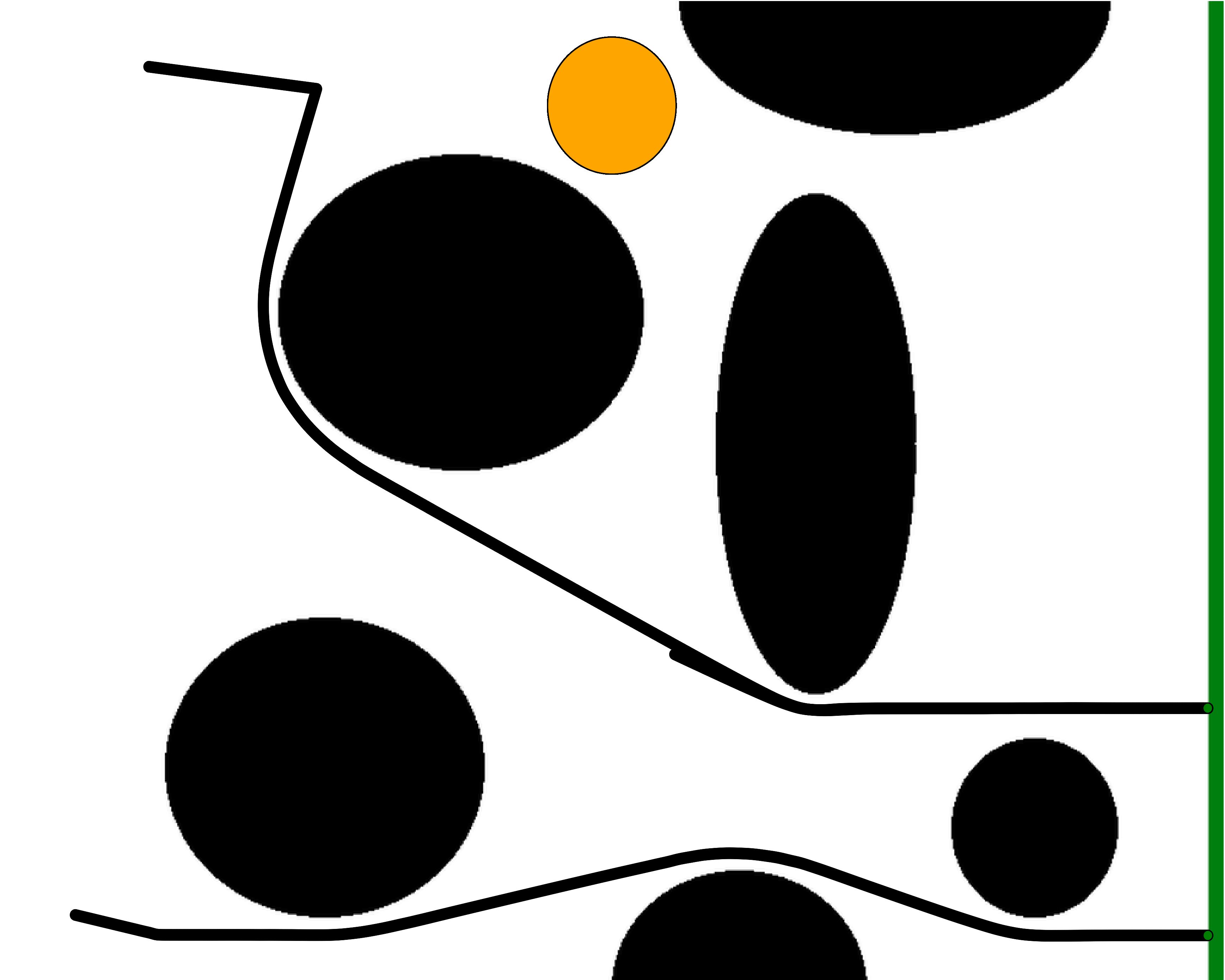}
        \caption{Paths generated from the potential field in Figure~\ref{fig:potential_fire}, avoiding the fire.}
        \label{fig:evac_path_fire}
\end{minipage}
\end{figure}

Let $\vec{x}_{a_i}(t)$ denote the position of active agent $i$ at time $t$. We express their motion within the geometry $\Omega$ by
\begin{equation}
\begin{cases}
    \displaystyle
        \frac{d\vec{x}_{a_i}}{dt} &= -v_s(\vec{x}_{a_i},t)\frac{\nabla\Phi(\vec{x}_{a_i}) - \nabla p(\vec{x}_{a_i},t)}{||\nabla\Phi(\vec{x}_{a_i}) - \nabla p(\vec{x}_{a_i},t)||},\\
        \vec{x}_{a_i}(0) &= \vec{x}_{a_i,0},
\end{cases}
\label{eq:x_a}
\end{equation}
where $\vec{x}_{a_i,0}$ represents the initial configuration of the active agents and $v_s$ represents a predefined walking speed.
In \eqref{eq:x_a}, $p$ represents a given discomfort term that influences agent interactions at the macroscopic scale. 
The discomfort measures how much agents locally have to deviate from their ideal velocity. We are on purpose vague concerning this macroscopic discomfort. In a follow-up publication, we will tackle a multiscale non-uniform crowd model where $p$ will be part of the solution to a macro-micro flow problem. 

\subsection{Passive agents}

Since we assume in this context that passive agents are unfamiliar with their environment, it is reasonable to postulate that to obtain information, they rely solely on neighbouring agents. 
This is a modelling assumption which has been confirmed for e.g. primates in \cite{meunier06}. This idea has already been applied in other crowd dynamics models (cf. e.g. \cite{helbing00}).

To model this strategy, we choose to apply a Cucker-Smale-like model which averages the velocity of nearby agents (an idea introduced originally in \cite{cucker07}).
A Brownian term $\mathbf{B}_i$ is added to this swarming-like model to represent disorienting and chaotic effects which inherently appear while moving through an unknown environment. 

We denote the positions and velocities of agent $i$ from population $X_B$ as $\vec{x}_{b_i}$ and $\vec{v}_{b_i}$, and positions and velocities from member $j$ of the complete set $X_A \cup X_B$ as $\vec{x}_j$ and $\vec{v}_j$, respectively. 

We express the motion of passive agents in the following way
\begin{equation}
\begin{cases}
    \frac{d\vec{v}_{b_i}}{dt} &= \sum_{j\in X} (\vec{v}_{j} - \vec{v}_{b_i})w_{ij}- \nabla {H}(\vec{x}_{b_i},t)\\
     &+ \frac{\vec{v}_{b_i} - \nabla p}{||\vec{v}_{b_i} - \nabla p||}\Upsilon\left(s(\vec{x}_{b_i},t)\right)+ \mathbf{B}_i(t),\\
     \frac{d\vec{x}_{b_i}}{dt}&=\vec{v}_{b_i},\\
    \vec{v}_{b_i}(0) &= \vec{v}_{b_i,0},\\
    \vec{x}_{b_i}(0) &= \vec{x}_{b_i,0}.
\end{cases}
\label{eq:x_b}
\end{equation}
In \eqref{eq:x_b}, $w_{ij}$ are weight factors, decreasing as a function of distance, defined as
\begin{equation}
    w_{ij} \sim \frac{1}{r_s^2}\exp\left(-\frac{|\vec{x_{b_i}} - \vec{x_{j}}|^2}{r_s^2}\right).
    \label{eq:weight_factor}
\end{equation}
In~\eqref{eq:weight_factor}, $r_s$ is the sight radius in the agents' location, affected by smoke level $s(\vec{x},t)$ (see Section~\ref{sec:smoke}). It should also be noted that we do not take into account those walls that block the transfer of information between agents, since they are ignored in \eqref{eq:weight_factor}. However, in the simulations described in the next section, the size of the walls generally exceeds the size of the interaction radius. 
The term $\Upsilon\left(s(\vec{x}_{b_i},t)\right)$ is simply an {\em a priori} known normalization factor depending of the smoke level; one can take $\Upsilon\left(s(\vec{x}_{b_i},t)\right)=1$ just for simplicity. In~the context of \eqref{eq:weight_factor}, the gradient in the discomfort level\footnote{Here,  we assume that the discomfort is perceptible, known.} $\nabla p$  bounds asymptotically the speed of the passive agents.

Note that, based on \eqref{eq:x_b}, passive agents follow a set of coupled second-order differential equations (a social force-like model), while following \eqref{eq:x_a}, the active agents are expected to respect a set of coupled first-order differential equations (a social velocity-like model). We believe that the 'social inertia' is much higher in the case of passive agents, so we keep the classical Langevin structure of the balance of forces, while for the active agents we choose an overdamped version. 

Another important observation is that in this model, passive agents do not know which of the other agents are active, and which are passive themselves; they follow others indiscriminately.

\subsection{Smoke effects}
\label{sec:smoke}
In addition to repelling the agents, the fire produces smoke which propagates in $\Omega$ and reduces the visual acuity of the agents.
The creation and propagation of the smoke is modelled as a diffusion-dominated reaction-advection-diffusion process.

The smoke density $s(\vec{x},t)$, is assumed to respect the following equation
\begin{equation}
\displaystyle
\begin{cases}
    \partial_ts = \operatorname{div}(D\nabla s) - \operatorname{div}(\vec{v}s) + y_s H(\vec{x})&\mbox{ in } \Omega \setminus G,\\
    (-D\nabla s+ \vec{v}s)\cdot \vec{n} = 0 &\mbox{ on } \partial \Omega \cup \partial G,\\
    s(\vec{x},0) = 0 &\mbox{ in } \Omega,
\end{cases}
\label{eq:smoke}
\end{equation}
where $D>0$ represents the smoke diffusivity, determined by the environment, $\vec{n}$ is the outer normal vector to $\partial \Omega \cup \partial G$, $\mathbf{v}$ is a given drift corresponding to, for instance, ventilation systems or indoor airflow, while $H(\vec{x})$ encodes the shape and intensity of the fire, viz.
\begin{equation}
    H(\vec{x}) = \begin{cases}
        R &\mbox{ if }|\vec{x} - \vec{x}_0| < r_0\\
        0 &\mbox{ otherwise }
    \end{cases}.
    \label{eq:fire}
\end{equation}
In our context, $D>0$ is the molecular diffusion coefficient for the smoke and a slight space dependence in $D$ is allowed. At a later stage, maybe eventually also an $s$-dependence of $D$ can be foreseen, if one would replace \eqref{eq:smoke} by an averaged version where the free motion paths and the geometry
 are perceived as some sort of \textquotedblleft homogenized\textquotedblright \ porous medium.
 
Figure~\ref{fig:smoke_prop} illustrates a snapshot of the smoke density for our case study.

\begin{figure}[ht]
    \centering
    \includegraphics[width=0.6\textwidth]{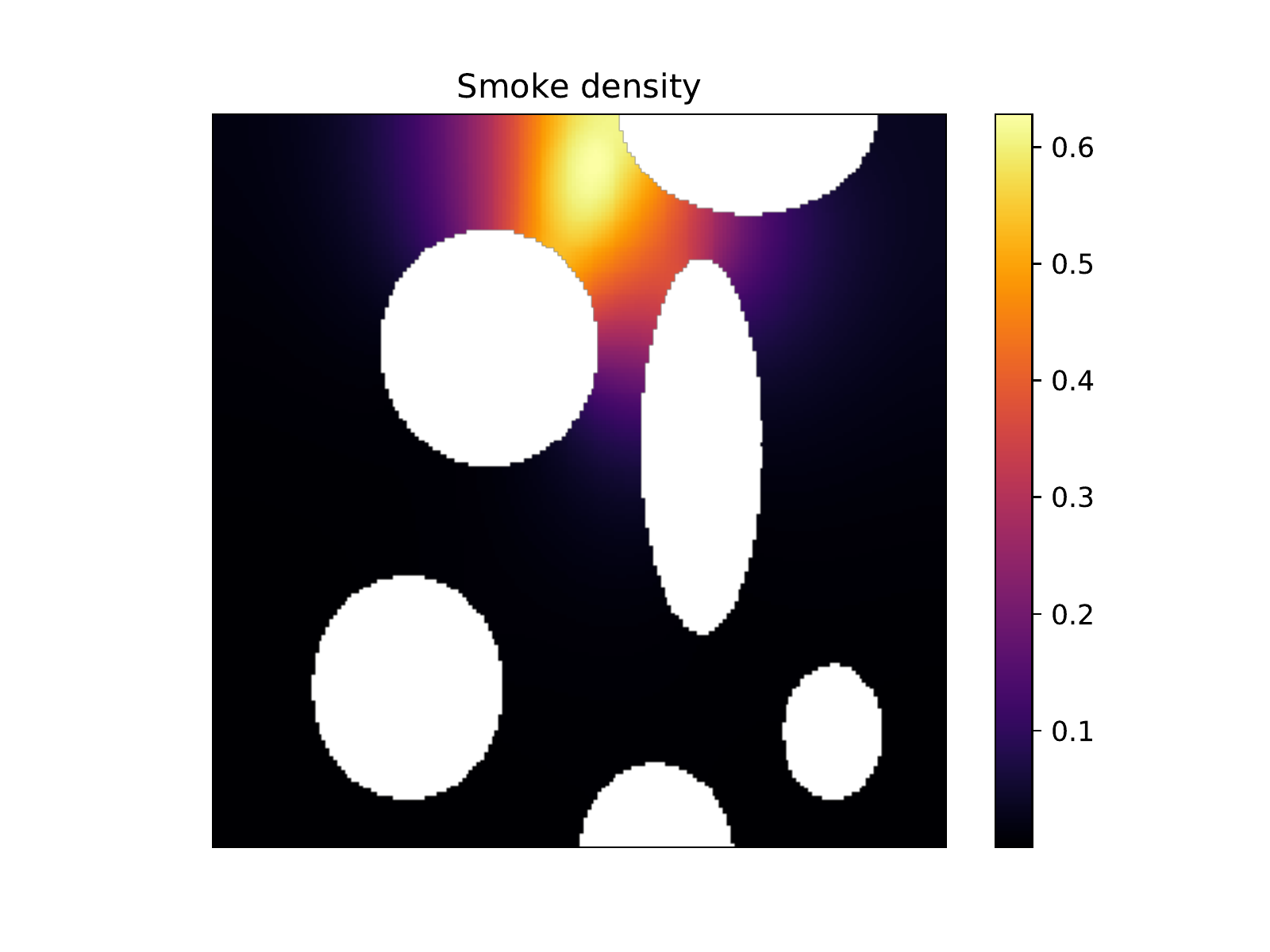}
    \caption{Smoke density in the environment at $t=60$.}
    \label{fig:smoke_prop}
\end{figure}

\section{Results Model 1: Agent-based dynamics}
\label{sec:results_1}
This section contains our numerical results obtained using the agent-based dynamics described in the previous section.

The results are run in crowd simulation prototyping application \emph{Mercurial} (\cite{mercurial}). This is an open-source framework developed in Python and Fortran to simulate hybrid crowd representations as the one described in Section~\ref{sec:model_1}. It provides both agent-based- and continuum-level visualizations and supports the design of arbitrary two-dimensional geometries. More details on the structure and implementation of \emph{Mercurial} are found in \cite{OmarMSC}.

Figure~\ref{fig:example} shows the geometry of our case study.
It has a fairly simple structure to ensure the exit can be reached even without environment knowledge.
However, the placement of the obstacles is such that zones of congestion easily occur and paths to the exit will necessarily have to be curved.

The simulation was run twice with 1000 agents, varying the ratio between active and passive agents. The first run (Case 1), of which a snapshot is presented in Figure~\ref{fig:active_configuration}, contains a total of 800 active agents and 200 passive agents. The second run (Case 2), illustrated with a snapshot in Figure~\ref{fig:passive_configuration}, contains a total of 200 active agents and 800 passive agents.

\begin{figure}[ht]
\centering
\begin{minipage}{.45\textwidth}
        \centering
        \includegraphics[width=0.8\textwidth]{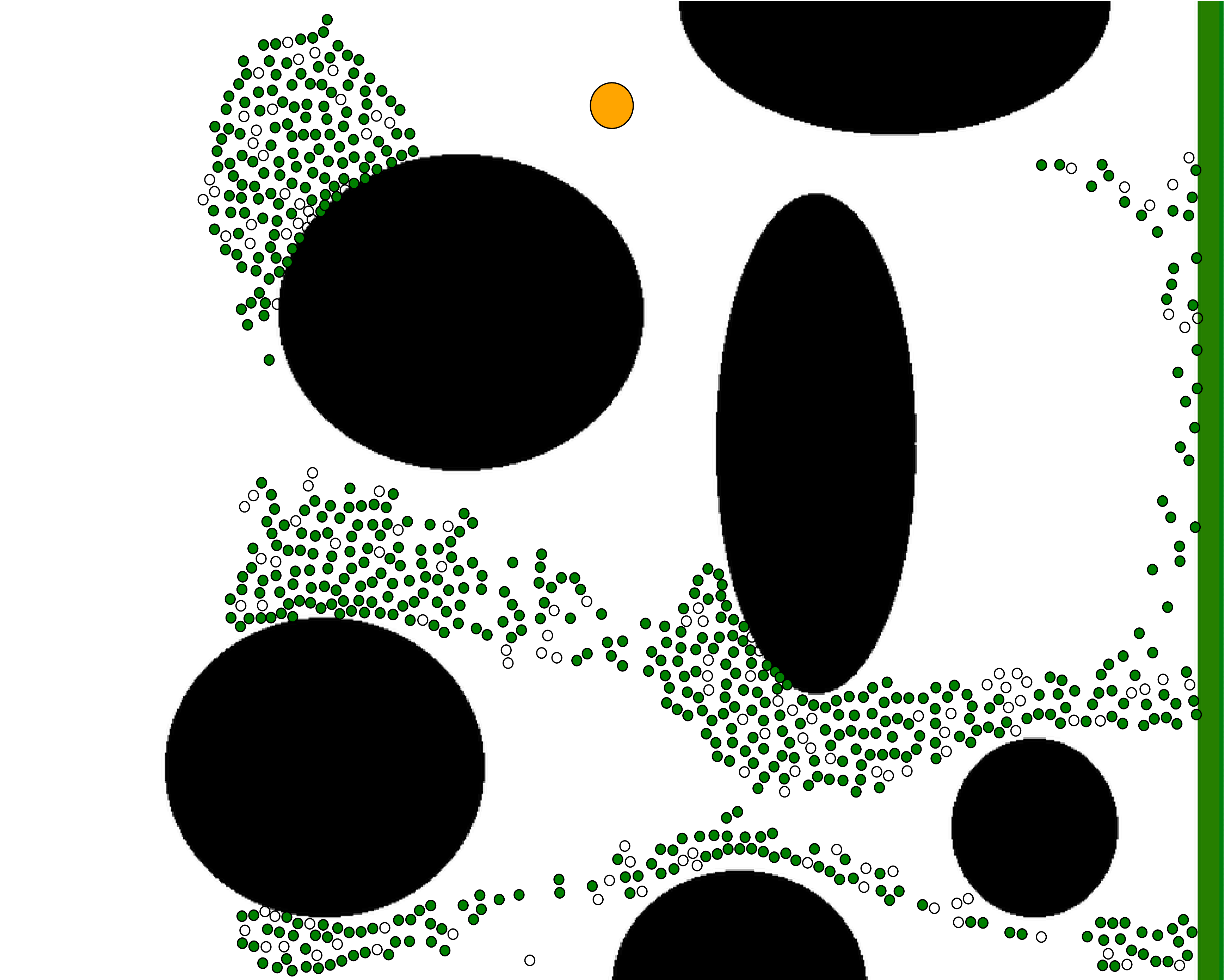}
        \caption{Snapshot after $t=17.9$ for a population with 80\% active agents. Active agents are displayed as filled circles, passive agents as open circles. The larger circle represents the center of the fire.}
    \label{fig:active_configuration}
\end{minipage}%
\hfill
\begin{minipage}{.5\textwidth}
        \centering
        \includegraphics[width=\textwidth]{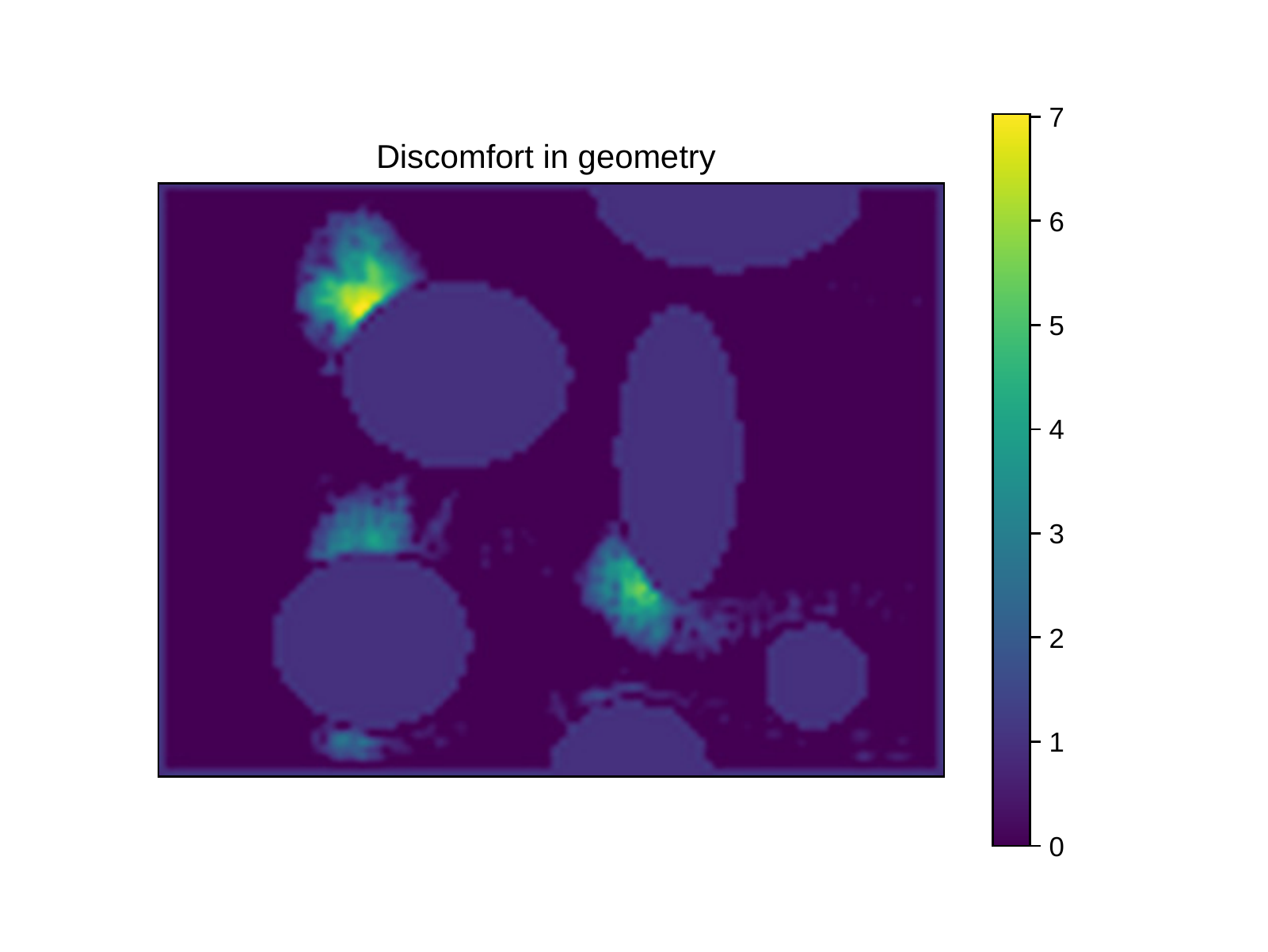}
        \caption{Discomfort observed in the simulation snapshot of Figure~\ref{fig:active_configuration}.}
        \label{fig:active_discomfort}
\end{minipage}
\end{figure}

\begin{figure}[ht]
\centering
\begin{minipage}{.45\textwidth}
    \centering
    \includegraphics[width=0.8\textwidth]{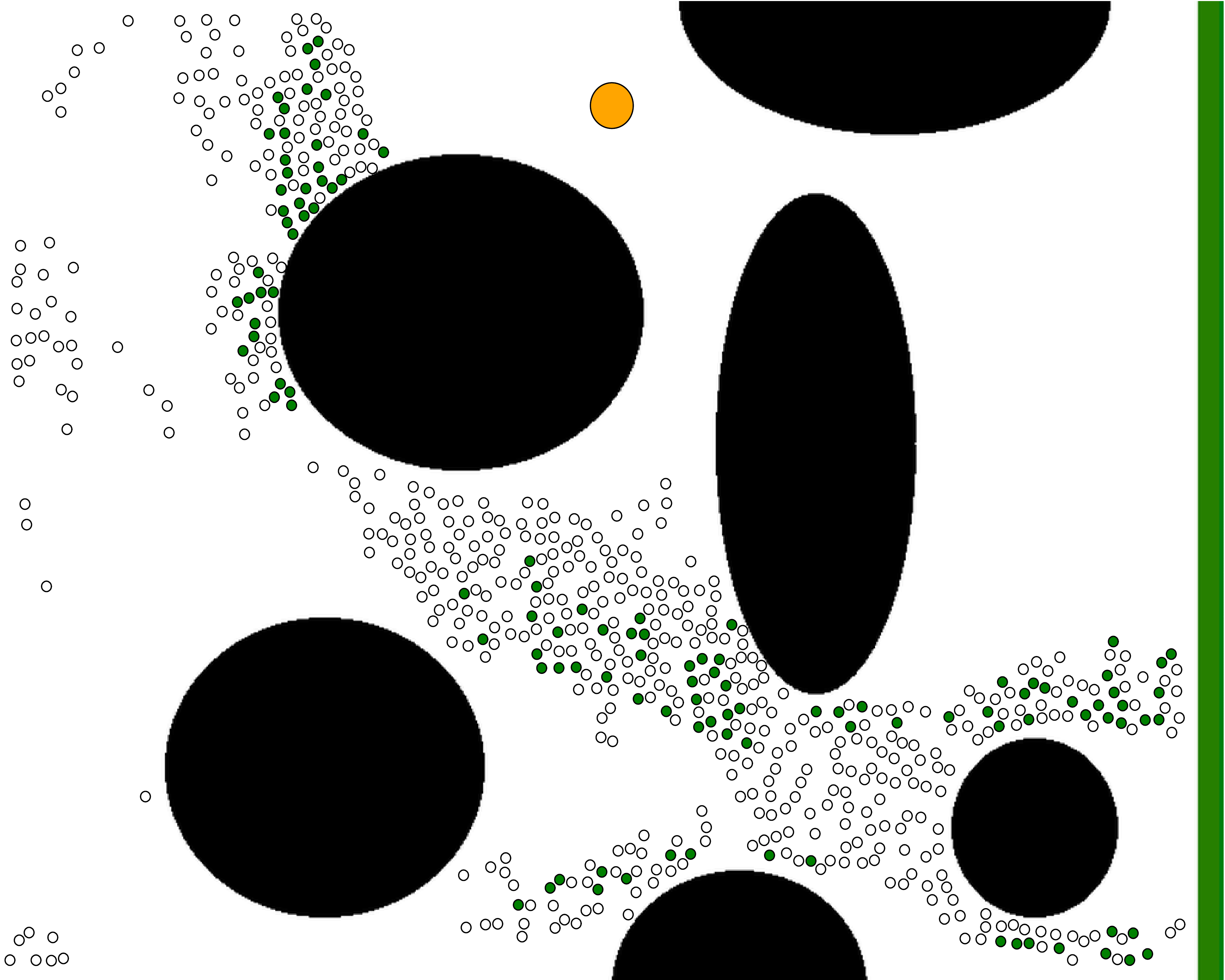}
     \caption{Snapshot after $t=37.5$ for a population with 20\% active agents. Active agents are displayed as filled circles, passive agents as open circles. The larger circle represents the center of the fire.}
     \label{fig:passive_configuration}
\end{minipage}%
\hfill
\begin{minipage}{.5\textwidth}
        \centering
        \includegraphics[width=\textwidth]{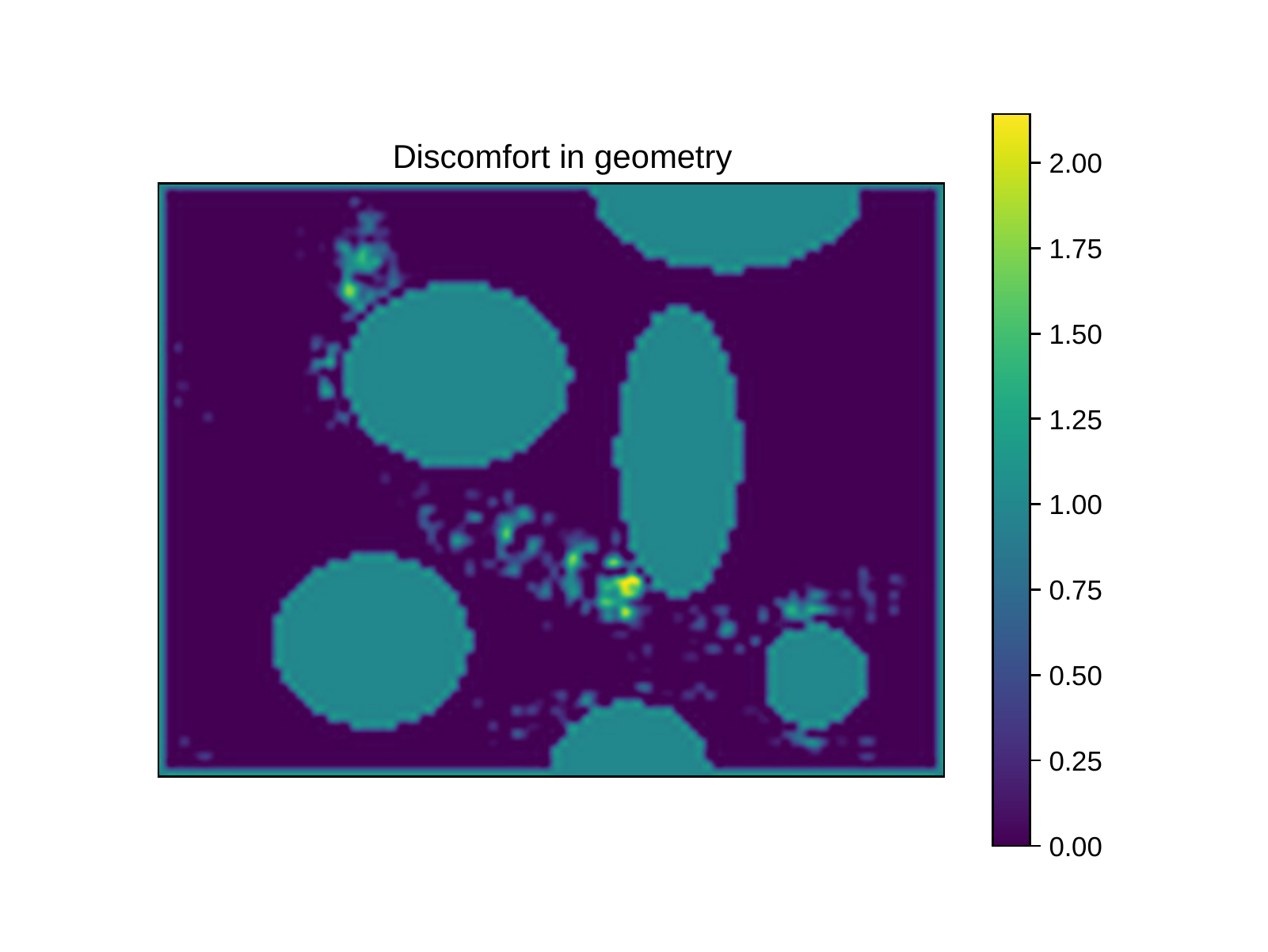}
         \caption{Discomfort observed in the simulation snapshot of Figure~\ref{fig:passive_configuration}.}
         \label{fig:passive_discomfort}
\end{minipage}
\end{figure}

Figure~\ref{fig:active_discomfort} and Figure~\ref{fig:passive_discomfort} illustrate a coherent discomfort field in an ongoing simulation due to a large number of agents with conflicting directions. The corresponding agents configuration (i.e. their spatial distribution) is displayed in Figure~\ref{fig:active_configuration} and Figure~\ref{fig:passive_configuration}. 
Visible is that close to the fire a lot of discomfort is generated. The main cause for this congestion is the conflict between active agents who have identified the location of the fire and want to move in different directions and active agents which are still unaware and want to exit the geometry through that particular corridor. This reminisces of the panic zone that occurs in crowd disasters close to the origin of the panic.
Notice how this zone is much more present in Case 1 than in Case 2, due to the lack of active agents in Case 2. While the passive agents take a lot longer to reach the exit, their following-dominated behaviour amounts to less discomfort in doing so. 

It would be interesting to have a partial differential equation describing at least approximately the macroscopic space-time evolution of such discomfort field available. Also, such an object would be very useful from a practical point of view -- it would allow a fast detection of zones of high discomfort, which could be helpful in taking management decisions to reduce the potential of risks and accidents. 

In Case 1 (Figure~\ref{fig:active_configuration}), we observe that all agents belong to a collective moving towards the exit, regardless of population. 
As one would expect, Case 2 (Figure~\ref{fig:passive_configuration}) displays less order than Case 1. Most agents move in smaller groups, either guided by active agents or randomly moving throughout the geometry. \\

Figure~\ref{fig:active_evac_times} and Figure~\ref{fig:passive_evac_times} depict the agents leaving the environment as a function of time. 
In Figure~\ref{fig:active_evac_times} we observe three stages: the first stage (from $t=0$ to $t\approx 100$) corresponds to the group of active agents that exit the geometry without any obstructions, guiding most of the passive agents while doing so. The second stage (from $t\approx 100$ to $t\approx 400$) has virtually no agents that reach the exit; all the remaining agents are trapped in the high discomfort panic-like zone close to the fire. The third stage shows the final active agents have escaped the panic zone, reaching the exit.

Figure~\ref{fig:passive_evac_times} displays a similar first stage, but because the discomfort zones are a lot less intensive, there is no pronounced second and third stage.
Notice how after the bulk of the active agents have left the geometry, the egress of the passive agents has reduced to a random walk. \\
\begin{figure}[ht]
\centering
\begin{minipage}{.45\textwidth}
    \centering
    \includegraphics[width=\textwidth]{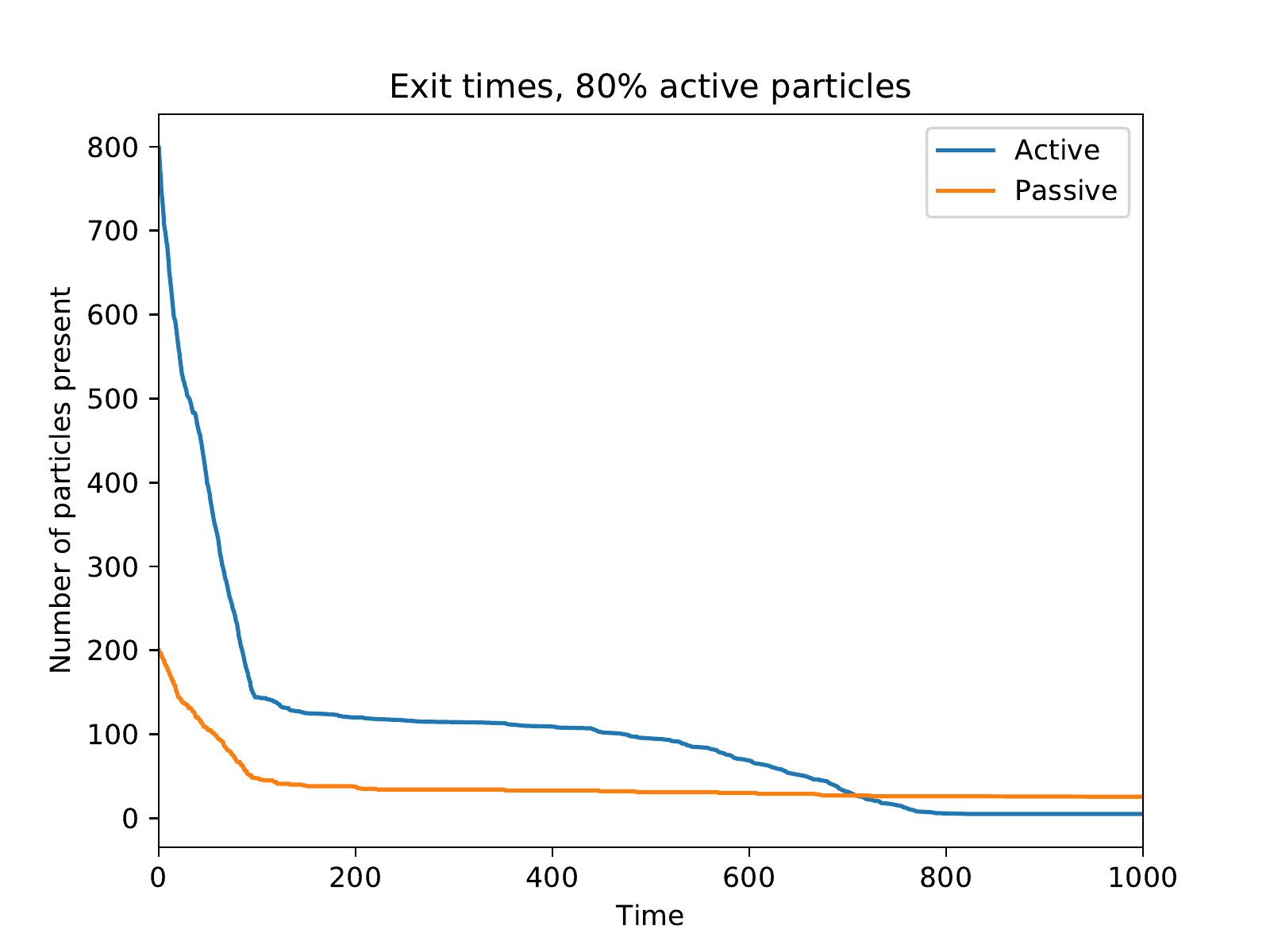}
    \caption{Agent exit times in Case 1.}
    \label{fig:active_evac_times}
\end{minipage}%
\hfill
\begin{minipage}{.45\textwidth}
    \centering
    \includegraphics[width=\textwidth]{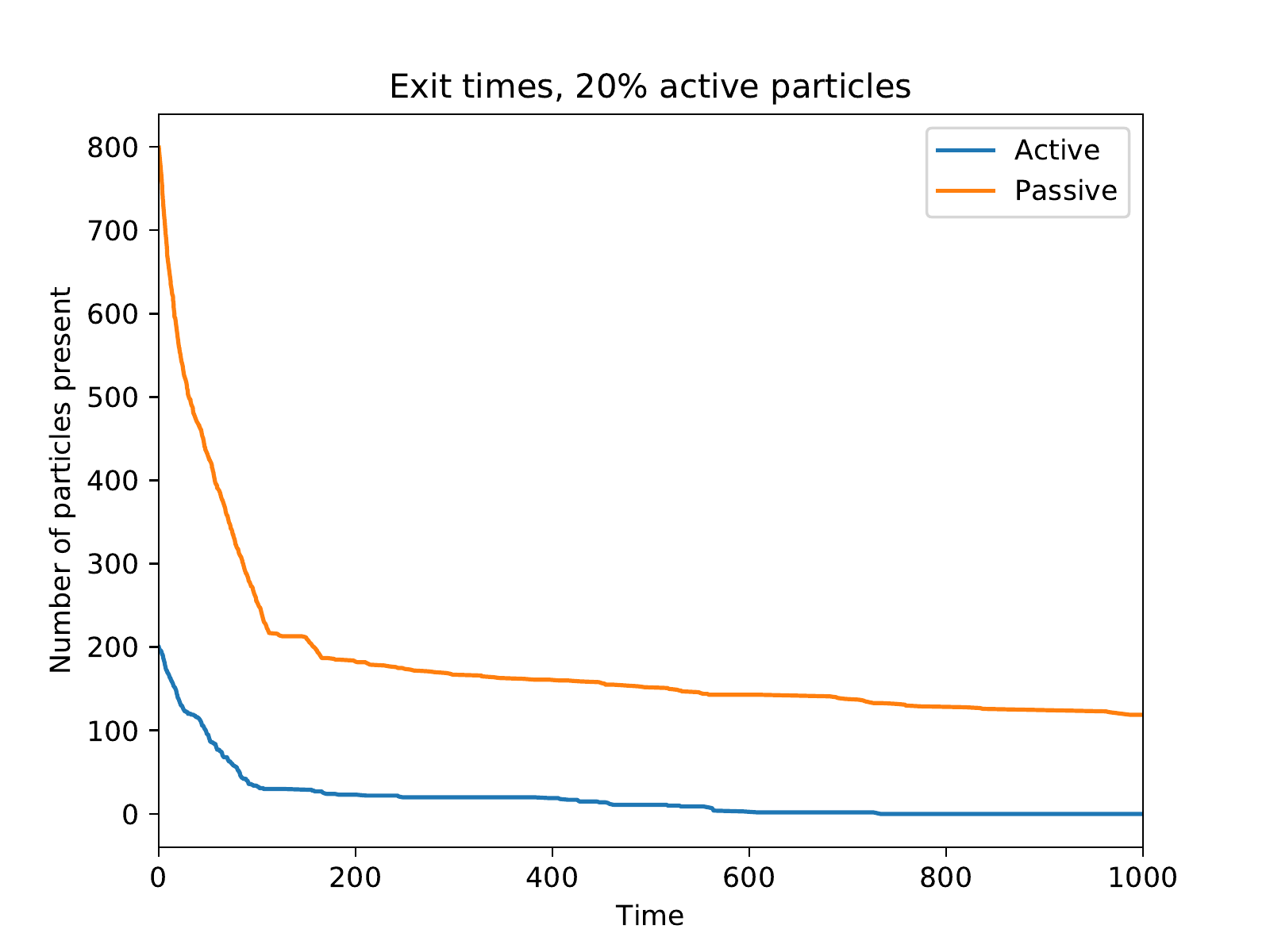}
    \caption{Agent exit times in Case 2.}
    \label{fig:passive_evac_times}
\end{minipage}

\end{figure}

These observations are supported by Figure~\ref{fig:active_cum_discomfort} and Figure~\ref{fig:passive_cum_discomfort}, where the cumulative discomfort for each location $\vec{x}$ in $\Omega$ is displayed.
Case 1 (Figure~\ref{fig:active_cum_discomfort}) shows significantly higher discomfort both near the fire and where the geometry narrows itself.
Case 2 (Figure~\ref{fig:passive_cum_discomfort}) shows a much higher usage of the space in the geometry, i.e. agents walking in locations that do not belong to any shortest path. However, the high discomfort zones are an order of magnitude lower than in Case 1, due to the \textquoteleft flexibility\textquoteright \ of passive agents.
\begin{figure}[ht]
\centering
\begin{minipage}{.45\textwidth}
        \centering
    \includegraphics[width=\textwidth]{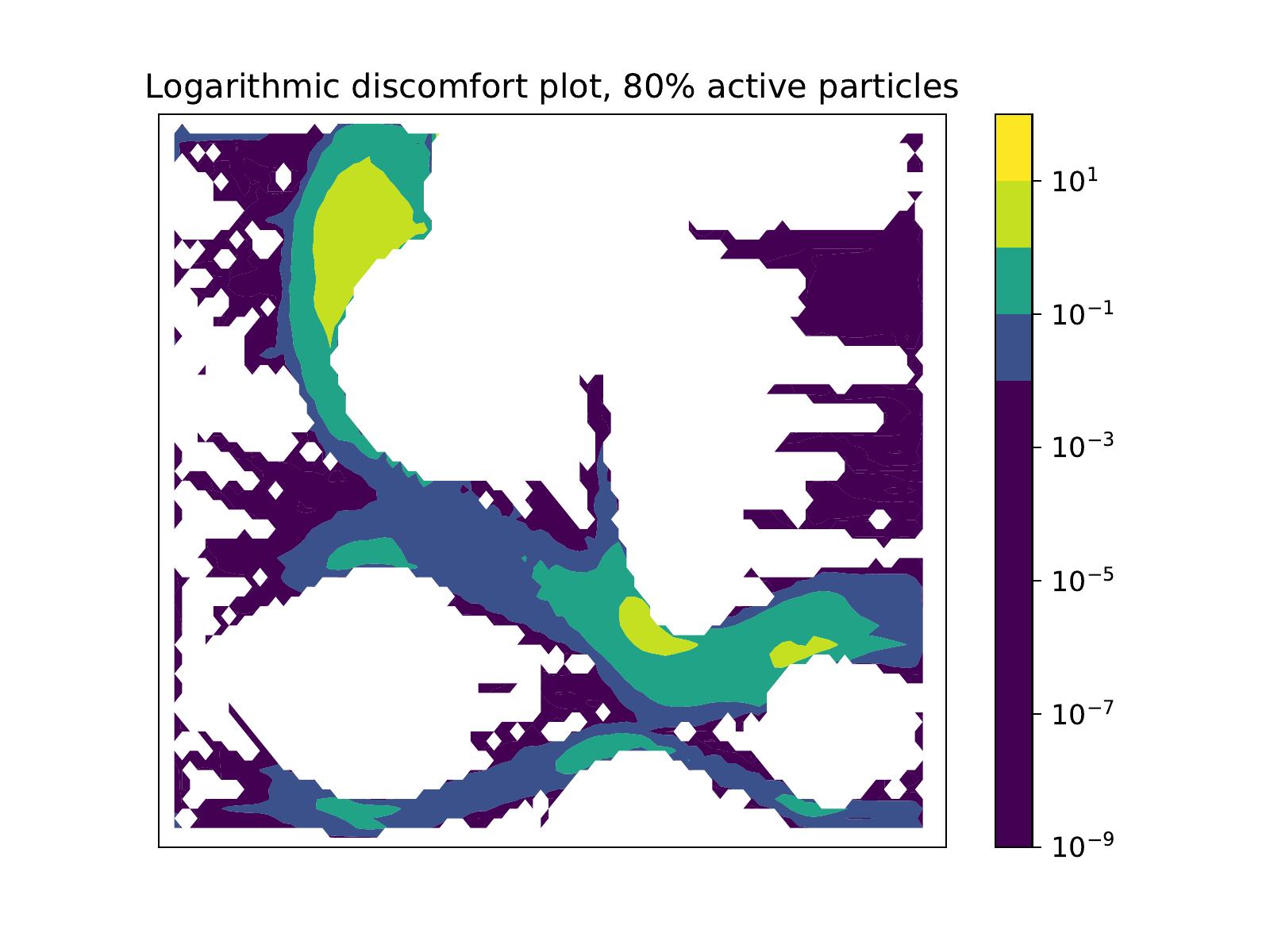}
    \caption{Logarithmic heat-map of discomfort zones that developed in the scenario with 80\% active agents. White regions indicate no experienced discomfort.}
    \label{fig:active_cum_discomfort}
\end{minipage}%
\hfill
\begin{minipage}{.45\textwidth}
        \centering
        \includegraphics[width=\textwidth]{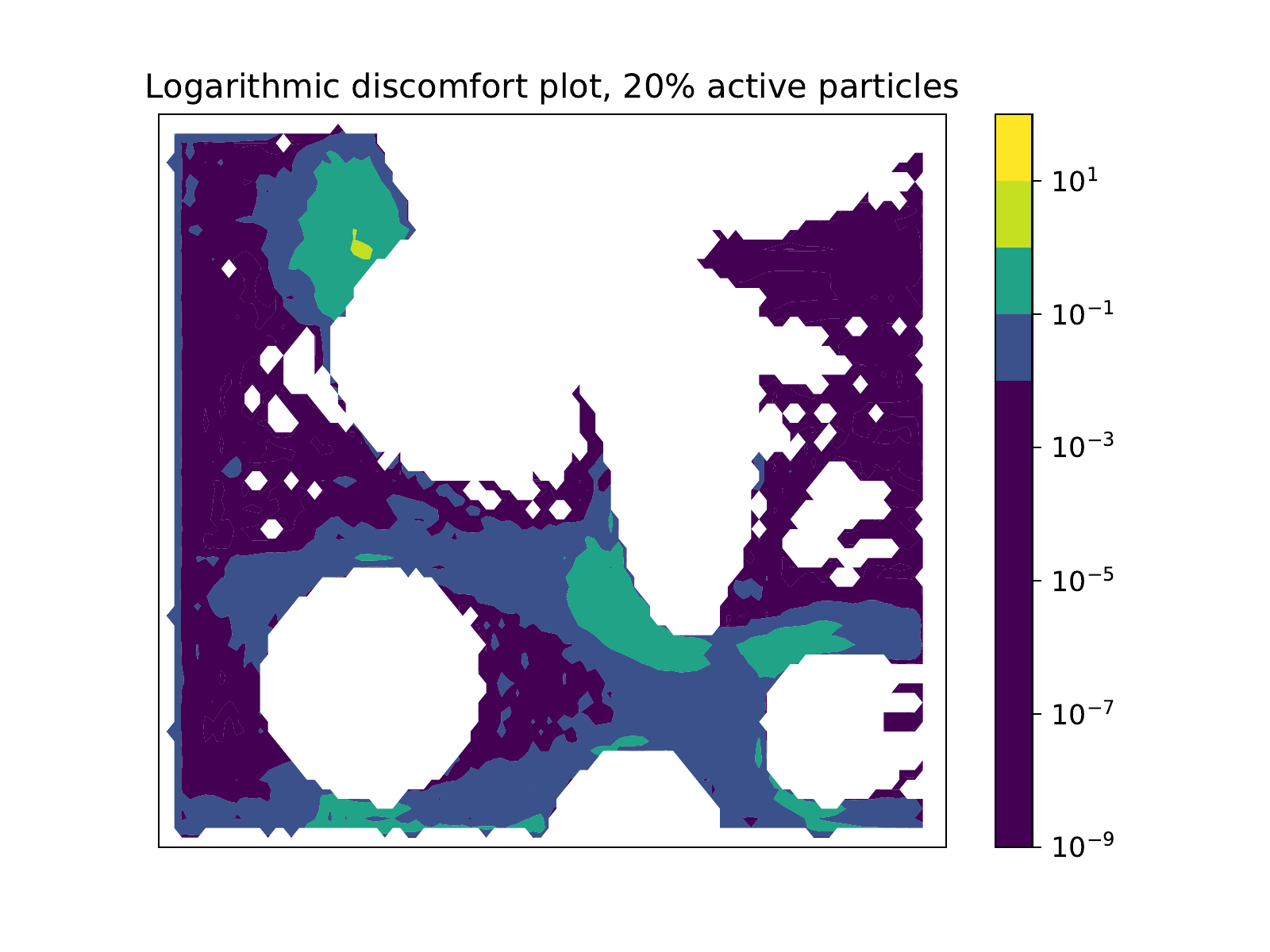}
        \caption{Logarithmic heat-map of discomfort zones that developed in the scenario with 20\% active agents. White regions indicate no experienced discomfort.}
        \label{fig:passive_cum_discomfort}
\end{minipage}
\end{figure}

Concluding, simulations support the following observations. Differences in environment knowledge can have a significant impact on several aspects on the dynamics of crowds in e.g. evacuations.
While it is true that additional knowledge decreases evacuation time, the autonomy of active agents can cause problems when their information turns out to be incorrect. When steering passive agents it is significantly more difficult to maintain order in the evacuation, but the fact that they can be guided can relieve discomfort and reduce congestion.

\clearpage

\section{Lattice gas dynamics (Model 2)}
\label{sec:model_2}

The second model we shall tackle here is a lattice gas model. Namely, we consider a Simple Exclusion Process (SEP) \cite{kipnis1998} on a two-dimensional lattice $\Lambda$:
\begin{equation}
    \Lambda=\{(i,j)\in \mathbb{Z}^2 :i=1,...,L_x \; \text{and} \; j=1,...,L_y\} \;.
\end{equation}
According to the basic tenets of the SEP dynamics, there can be only one particle per site, and particles jump independently towards one of the nearest neighbor sites on the lattice, provided that the arrival site is empty. 
We shall hereafter assume that the system is \textit{closed}, namely particles may not hop outwards from any of the boundary sites of $\Lambda$, except from a subset of lattice sites $\mathcal{D}=\{(i,j) \in \Lambda : j=L_y \; \text{and} \; i\in[i_{ex},i_{ex}+w_{ex}]\}$, called the ``exit door'': any particle located in $\mathcal{D}$ and hopping upwards is annihilated. Note that particles may just leave the system through the exit door: no inward flux of particles is considered in this model. The numerical investigation of the lattice gas model aims, indeed, to shed light on the characteristic time scales characterizing the particle evacuation from the system. \\
As in the case of Model 1, we distinguish between two species of particles, namely \textit{aware} or active particles,  and, respectively, \textit{unaware} or passive particles.  For simplicity of the notation, we shall refer to them as particles ``$A$'' and ``$U$'' in this section. While the species $U$ performs a symmetric simple exclusion dynamics on the lattice, particles of the species $A$ experience both a horizontal and a vertical drift, denoted below as $\epsilon_x$ and $\epsilon_y$, that enhance the rates at which particles of such species hop towards the exit door. The microscopic dynamics is defined as follows.
Call $\eta^{(U)}(x)$ and $\eta^{(A)}(x)$ the occupation number on the site $x$ (which is either $0$ or $1$) of the species, respectively, $U$ and $A$. Given two nearest neighbor sites $x,y \in \Lambda$, $|x-y|=1$, such that the \textit{bond} joining $x$ to $y$ is entirely contained in $\Lambda$, we define the hopping \textit{rate} from $x$ to $y$ of a particle of the species $U$ (no matter if the jump occurs along a horizontal or a vertical bond) as:
\begin{equation}
   c^{(U)}(x,y)=\eta^{(U)}(x)\left[1-\eta^{(U)}(y)-\eta^{(A)}(y)\right] \;.
   \label{cU}
\end{equation}
To define the corresponding hopping \textit{rate} for  particles of the species $A$ we shall distinguish between vertical and horizontal bonds. For the vertical bonds, i.e. when $i_y=i_x$, we set:
\begin{equation}
c^{(A)}(x,y)=\left\{\begin{array}{cc} (1+\epsilon_y)\eta^{(A)}(x)\left[1-\eta^{(A)}(y)-\eta^{(U)}(y)\right] & \quad \hbox{if $j_y>j_x$}\; ,\\ \\
0 & \quad \hbox{if $j_y<j_x$}\;.
  \label{cA1}
  \end{array}\right. 
\end{equation}
for bonds directed upwards and downwards, respectively. The microscopic dynamics of the species $A$, ruled by Eq. \eqref{cA1}, highlights the intrinsic bias of the species $A$ to move upwards, i.e. towards the exit, and prevents any redundant vertical motion in the opposite direction. Moreover, \eqref{cA1} also includes a drift term $\epsilon_y$ that marks the tendency of the species $A$ to reach the exit door with a rate that is higher than the unitary rate defining the unbiased dynamics of the species $U$, see Eq. \eqref{cU}. For the horizontal bonds, i.e. when $j_y=j_x$, we shall first consider the case $i_x \notin [i_{ex},i_{ex}+w_{ex}]$, for which we set:
\begin{equation}
c^{(A)}(x,y)=\left\{\begin{array}{cc}
(1+\epsilon_x)\eta^{(A)}(x)\left[1-\eta^{(A)}(y)-\eta^{(U)}(y)\right] & \quad \hbox{if $(i_y-i_x) (i_{ex}-i_y)\geq0$}\; ,\\ \\
\eta^{(A)}(x)\left[1-\eta^{(A)}(y)-\eta^{(U)}(y)\right] & \quad \hbox{if $(i_y-i_x) (i_{ex}-i_y)<0$}\;.
               \end{array}\right. 
               \label{cA2}
\end{equation}
The presence of a horizontal drift term $\epsilon_x$, in Eq. \eqref{cA2} reminds us that particles of the species $A$, unlike particles of the species $U$, move preferably towards the right (resp. the left), when the horizontal coordinate of the departure site is $i_x<i_{ex}$ (resp. $i_x>i_{ex}+w_{ex}$).
Instead, when $j_y=j_x$ and $i_x\in[i_{ex},i_{ex}+w_{ex}]$, we set:
\begin{equation}
c^{(A)}(x,y)=0  \;.
\label{cA3}
\end{equation}
Equation \eqref{cA3} says that, when $i_x\in [i_{ex},i_{ex}+w_{ex}]$, particles of the species $A$ may only hop upwards, namely they point directly towards the exit door without wandering along the horizontal direction.\\
The rates associated to those bonds joining any boundary site of $\Lambda$, that is not part of the exit door, to any external site, are all set equal to 0. Finally, the rates $c_{ex}$ associated to the vertical bonds joining a site $x\in \mathcal{D}$ with a site $y\notin \Lambda$, are defined as follows:

\begin{equation}
   c_{ex}(x)=\eta^{(U)}(x)(1-\eta^{(A)}(x))+(1+\epsilon_y)\eta^{(A)}(x)(1-\eta^{(U)}(x))\;.
\end{equation}

The proposed lattice gas model also accounts for the presence of fixed obstacles inside the domain, that correspond to a subset $\Lambda_{ob}\subset \Lambda$ of lattice sites that are inaccessible, represented by the black spots shown in Figure~\ref{fig:config}.
To complete the description of the microscopic dynamics we shall, hence, also set equal to zero all the rates associated to those bonds joining two sites, one (or even both) of them belonging to $ \Lambda_{ob}$.

The study of the evacuation of particles shall be pursued by considering, for each species, the behavior of the number of particles and the particle current (through the exit door) as a function of time. The particle current $J$ is defined as
\begin{equation}
    J(t)=\frac{N(0)-N(t)}{t}\;,
    \label{curr}
\end{equation}
where $N(0)$ and $N(t)$ denote, respectively, the number of particles of a given species at the initial time and at the time $t$. 

\section{Results Model 2: Lattice gas dynamics}
\label{sec:results_2}

This section contains our preliminary numerical results obtained using Model 2.\\
The dynamics was implemented by running a set of Kinetic Monte Carlo (KMC) simulations (\cite{Landau:2005}). KMC methods are notoriously suited to describe transient phenomena, in which physical time plays a crucial role in the microscopic evolution (\cite{Bortz,Voter}).
Note that, denoting by $T\in\mathbb{N}$ the number of time steps considered in the KMC simulation, the physical time $t\in\mathbb{R}$, considered in \eqref{curr}, is obtained as $t=\sum_{k=1}^{T} t_k$, where each $t_k$ (corresponding to the time elapsed between two consecutive particle jumps on the lattice) is an exponentially distributed random variable with a parameter given by the sum of all the rates associated to the lattice bonds, defined in Section \ref{sec:model_2}, see refs. \cite{CM,CCM} for details.

\begin{figure}[ht!]
\centering
\begin{tabular}{lll}
\includegraphics[width = 0.33\textwidth]{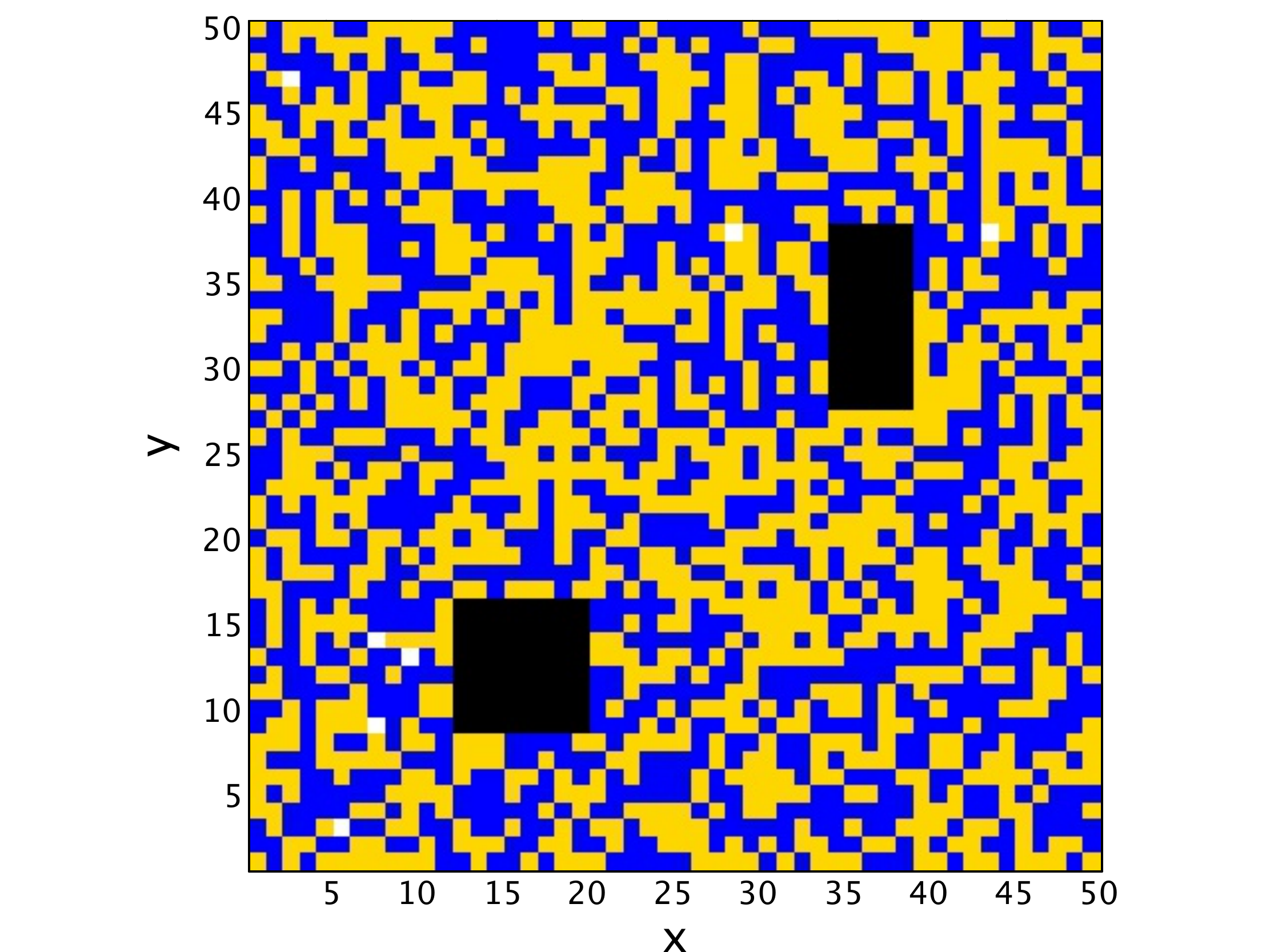} &
\includegraphics[width = 0.33\textwidth]{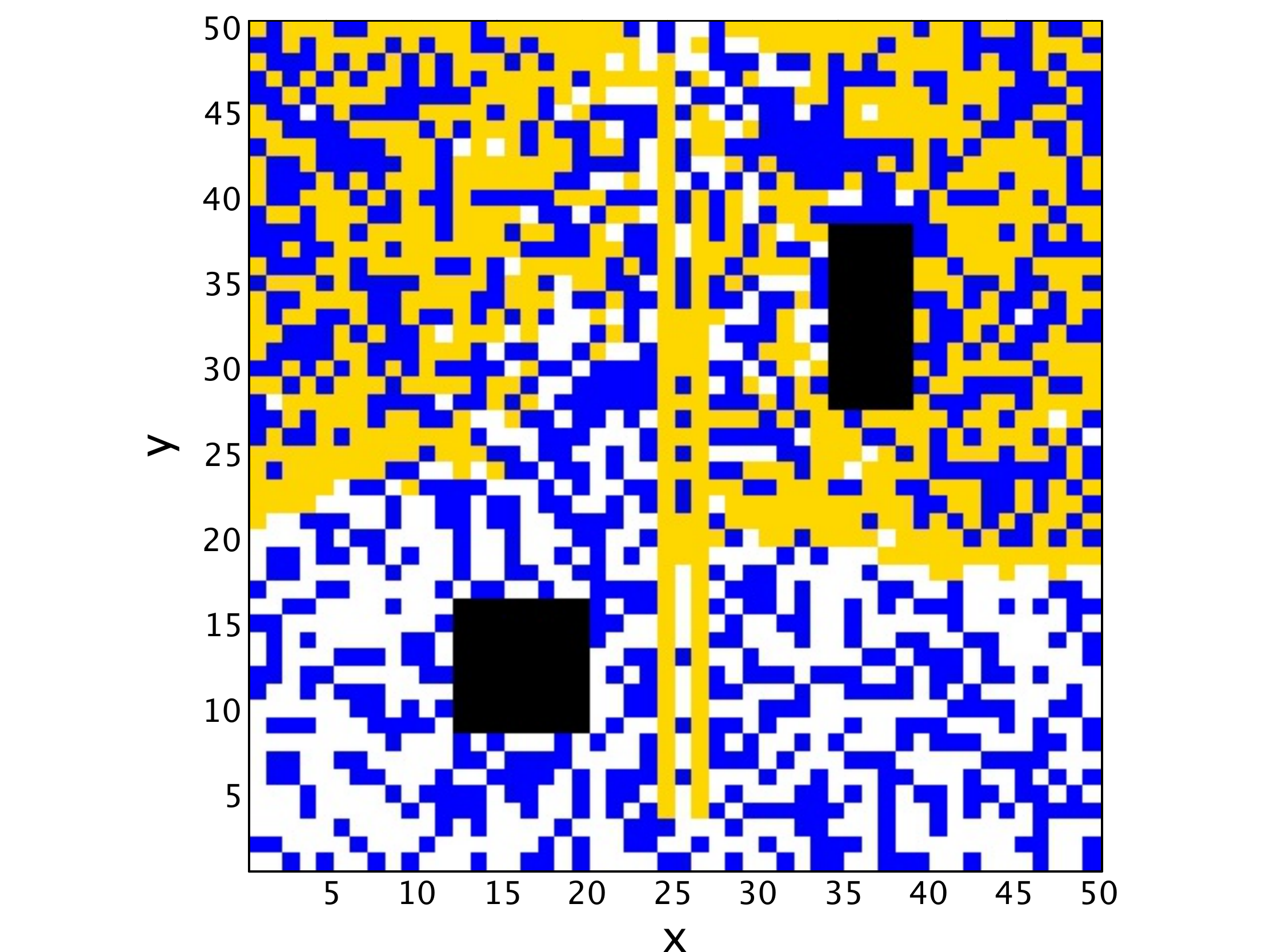} &
\includegraphics[width = 0.33\textwidth]{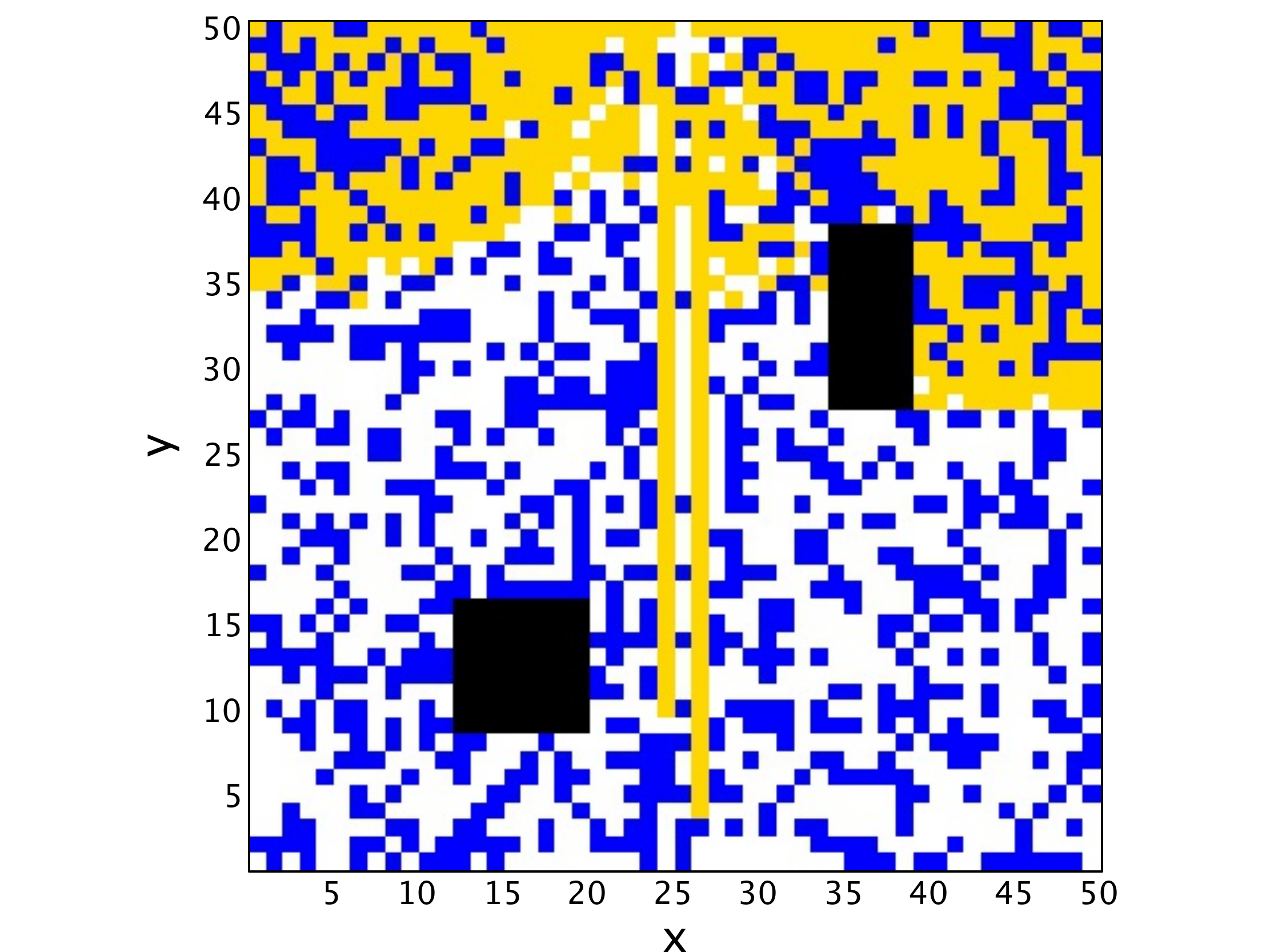}\\[0.2cm]
\includegraphics[width = 0.33\textwidth]{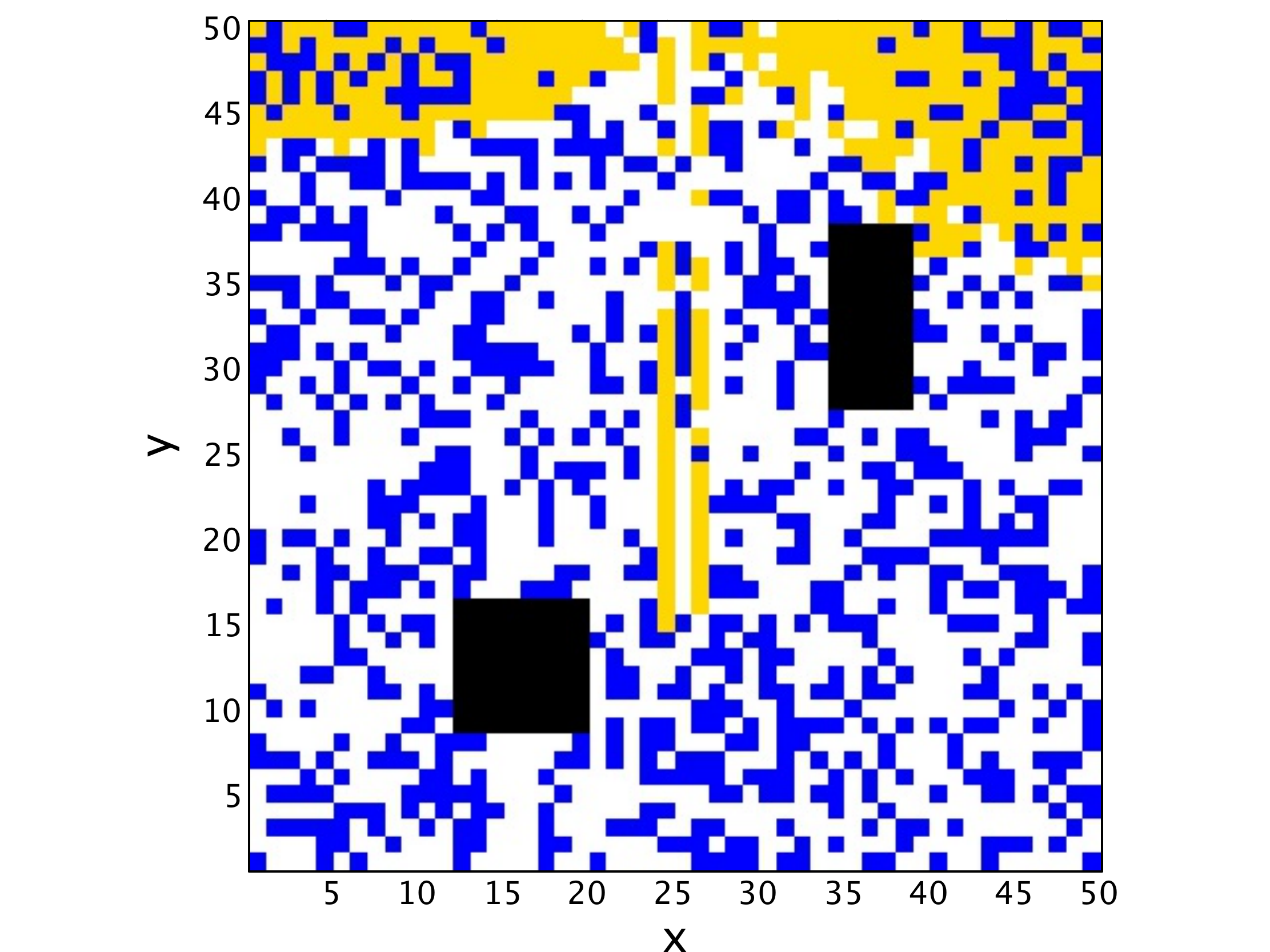} &
\includegraphics[width = 0.33\textwidth]{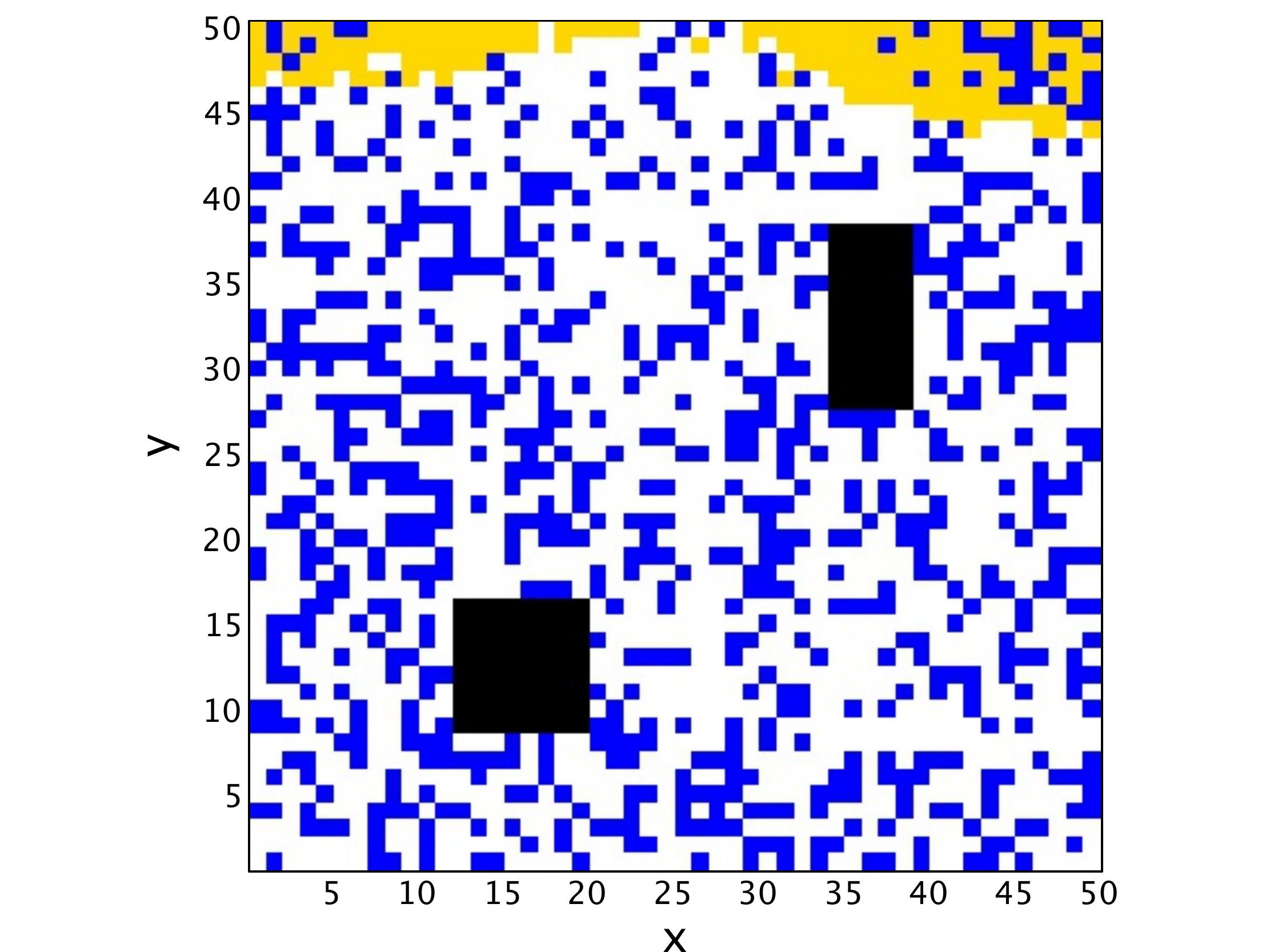}&
\includegraphics[width = 0.33\textwidth]{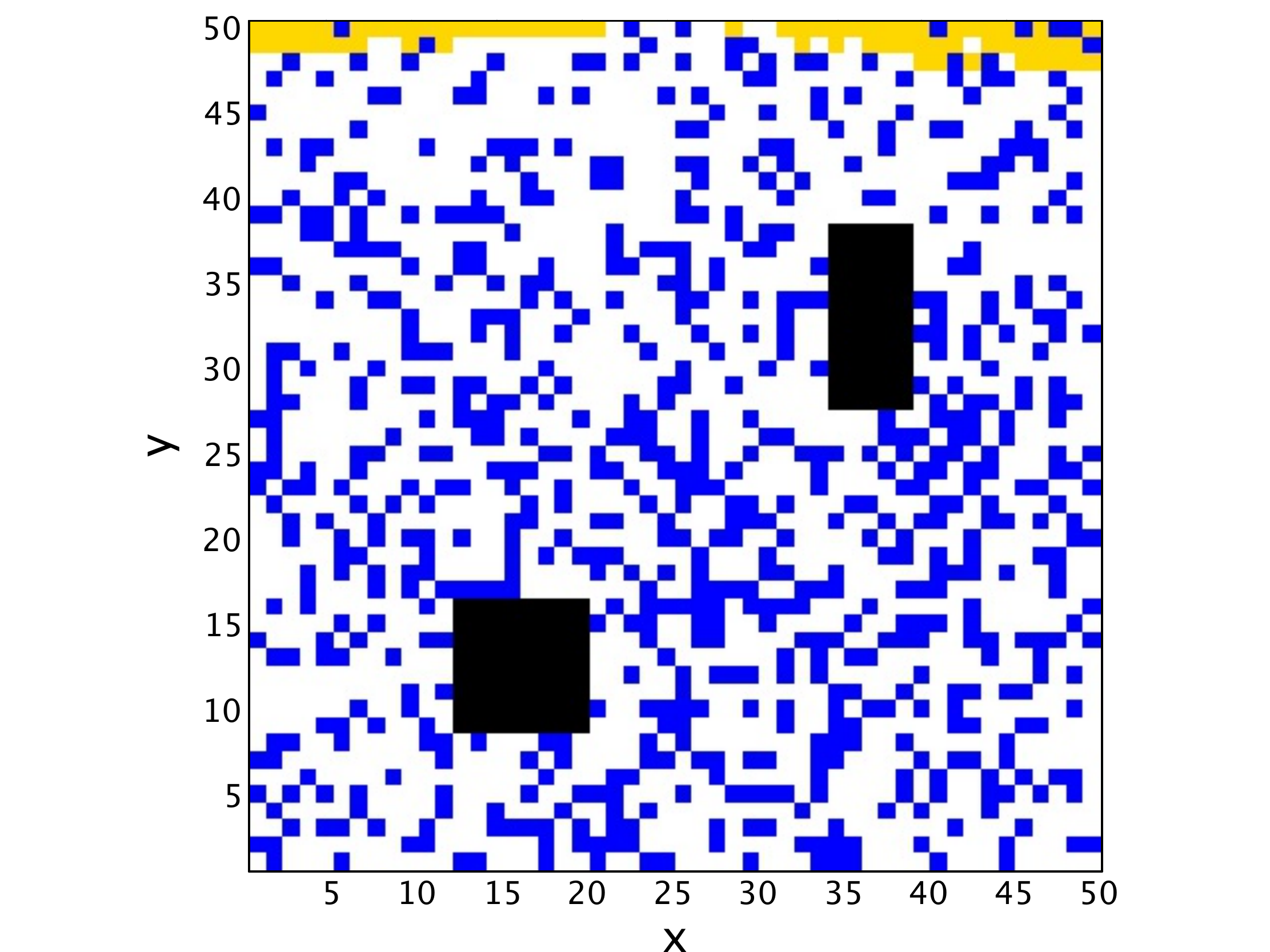}\\[0.2cm]
\includegraphics[width = 0.33\textwidth]{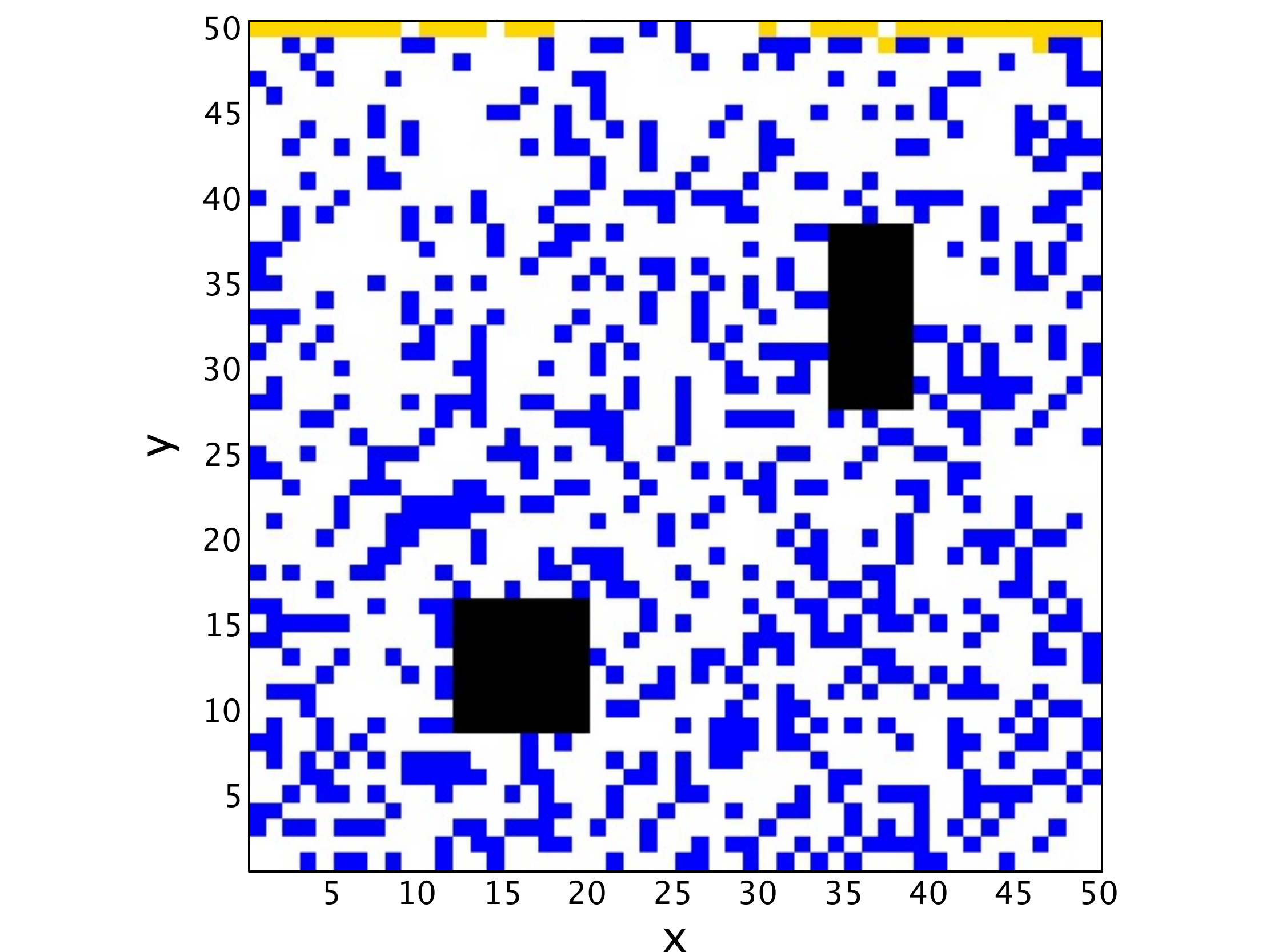} &
\includegraphics[width = 0.33\textwidth]{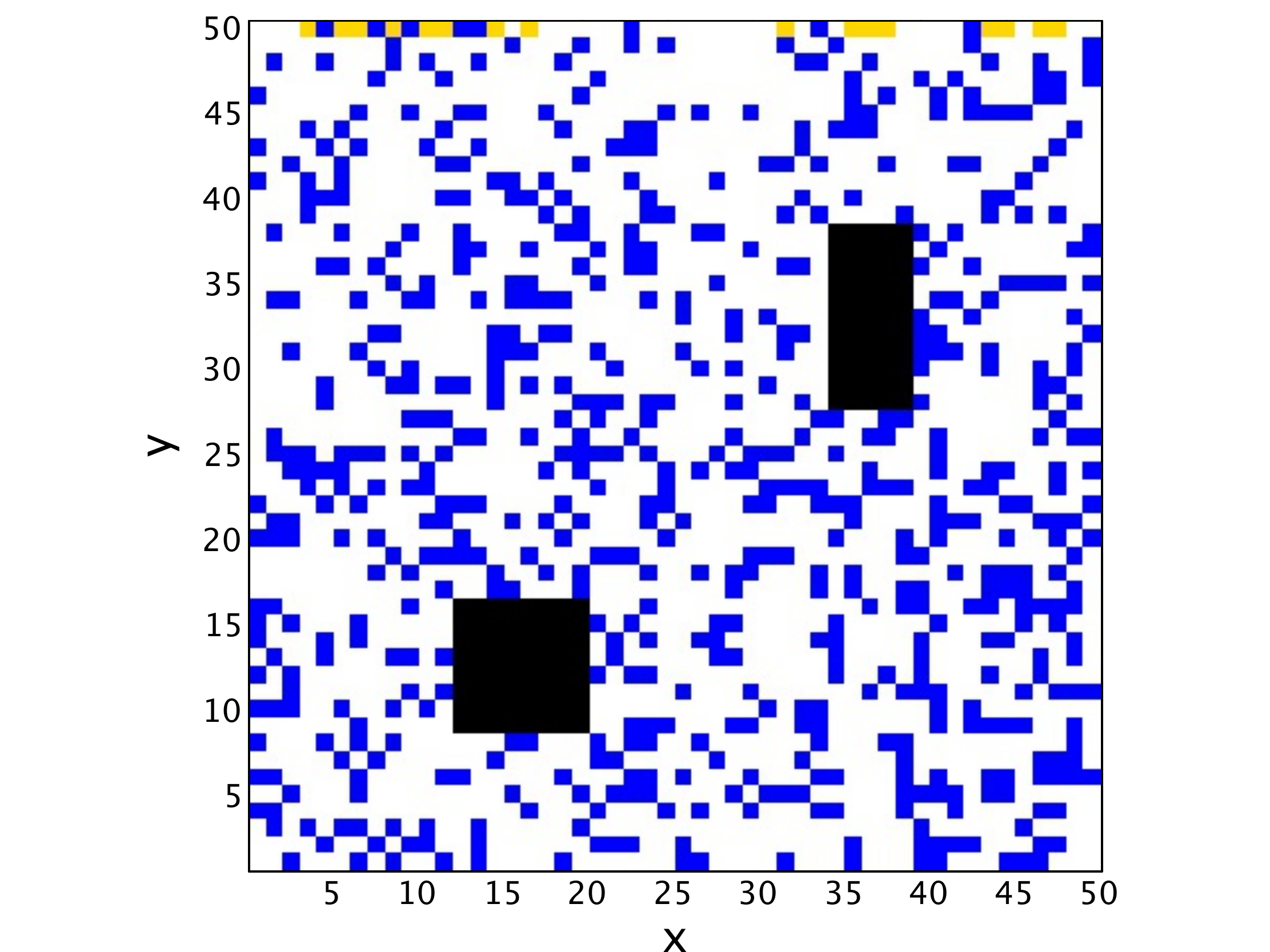} &
\includegraphics[width = 0.33\textwidth]{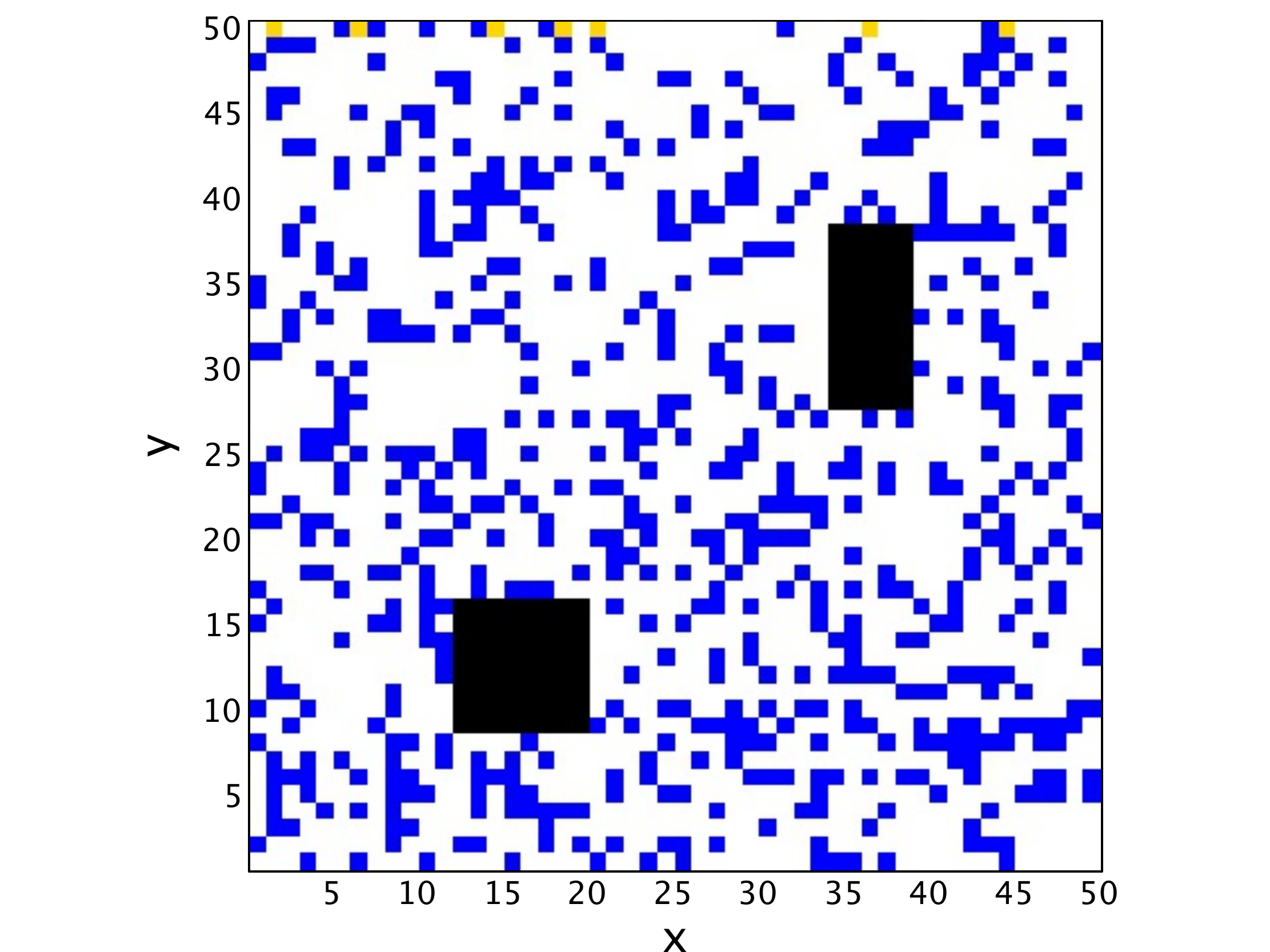}
\end{tabular}
\caption{Microscopic configurations of the particle system sampled at different times (time increases from top to bottom, left to tight). The configurations were obtained from Kinetic Monte Carlo simulations performed on a $2D$ square lattice $\Lambda$ with $L_x=L_y=50$. The yellow pixels correspond to particles of the species $A$, the blue pixels to particles of the species $U$, the white pixels represent empty spots on the lattice, whereas the black pixels denote the fixed obstacles (inaccessible sites). The width of the exit door, located at the center of the first horizontal row, is set equal to $w_{ex}=2$. The horizontal and vertical drifts affecting the dynamics of the species $A$ are set equal to $\epsilon_x=\epsilon_y=0.1$. The initial configuration (top left panel) sees an equal number of particles of the species $A$ and $U$ on the lattice, and the two species together occupy a fraction $98.5\%$ of all the accessible lattice sites.}\label{fig:config}
\end{figure}

The results of the KMC simulations for this model are portrayed in Figure~\ref{fig:config} and Figure~\ref{fig:part}. 
In Figure~\ref{fig:config} we show the microscopic configurations of the particles $A$ and $U$ at different times, namely from the initial configuration (top left panel) until the time when the evacuation of particle $A$ is essentially completed (cf. the bottom right panel, corresponding to some $3\times 10^6$ time steps of the dynamics).

\begin{figure}[ht!]
\centering
\includegraphics[width = 0.48\textwidth]{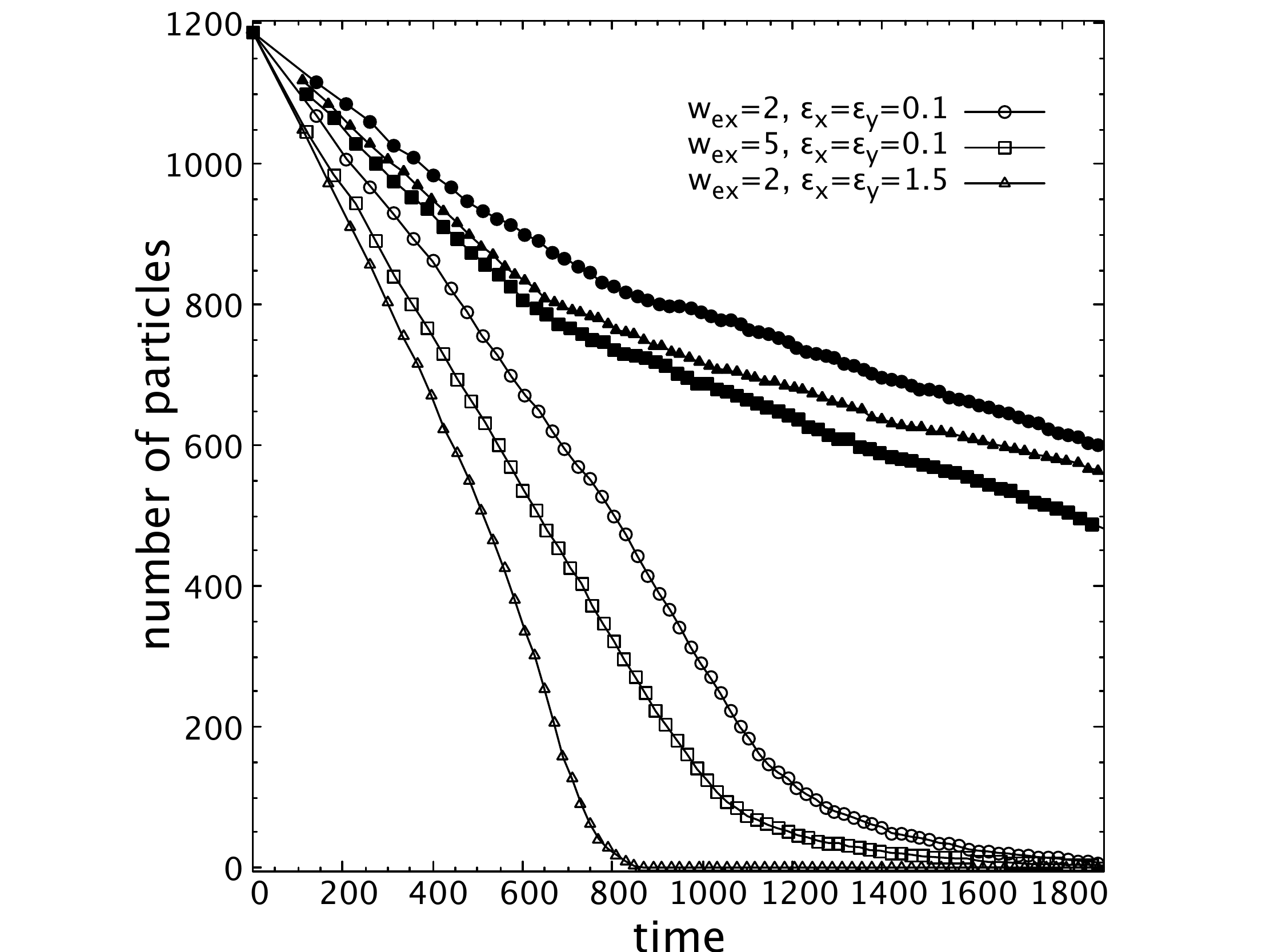} 
\hspace{0.3 cm}
\includegraphics[width = 0.48\textwidth]{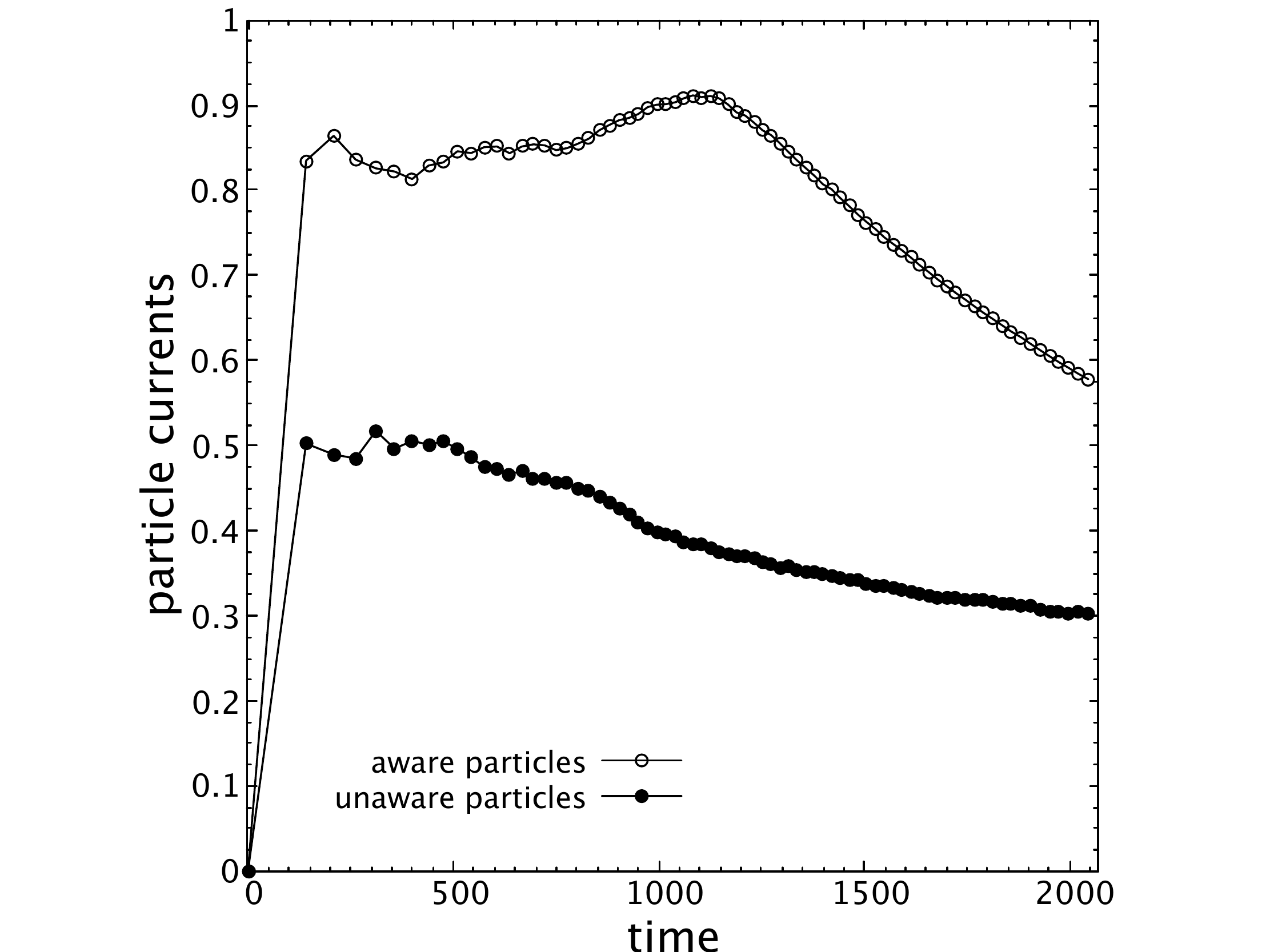} 
\caption{\textit{Left panel:} Behavior of the  number of particles of the species $A$ (empty symbols) and $B$ (filled symbols) vs. time, with $L_x=L_y=50$, and for different values of the exit width $w_{ex}$ and of the drift terms $\epsilon_x$ and $\epsilon_y$. \textit{Right panel:} behavior of the current vs. time for the two species $A$ and $U$ with $L_x=L_y=50$, $w_{ex}=2$ and $\epsilon_x=\epsilon_y=0.1$.}\label{fig:part}
\end{figure}

In the left panel of Figure~\ref{fig:part}, we show the effect of varying the width $w_{ex}$ of the exit door as well as the drifts $\epsilon_x$ and $\epsilon_y$ on the total number of particles $A$ and $U$ as a function of time. Clearly, by increasing the width of the exit door, particles of both species evacuate with a higher pace. The left panel of Figure~\ref{fig:part} also highlights an interesting effect that is obtained by varying $\epsilon_x$ and $\epsilon_y$: an increase of the drift terms induces a higher evacuation rate for particles of the species $A$, and also, consequently, for particles of the species $U$, which have access to a larger number of empty sites on the lattice. In the right panel of Figure~\ref{fig:part} we show the behavior of the particle current, defined in \eqref{curr}, for the two species $A$ and $U$ as a function of time, for fixed values of the parameters $w_{ex}$, $\epsilon_x$ and $\epsilon_y$. The higher evacuation rate observed for the aware particles stems directly from the definition of the rates for the two species $A$ and $U$, given in section \ref{sec:model_2}.

\section{Mathematical aspects of social dynamics in mixed populations}
\label{sec:analysis}
%
%
In this section, we discuss the solvability of a social dynamics model of mixed populations, resembling an overdamped version of Model 1. Note that Model 2 is well-posed by construction. Here, interesting questions would be pointing towards the rigorous derivation of the corresponding hydrodynamic limit equations \cite{DMP}, and/or the numerical evaluation of non-equilibrium collective effects (e.g. the inclusion of a reaction mechanism within the microscopic dynamics, allowing particles to switch from one species to the other, or the presence of long-range interactions between particles), but these aspects are not in our focus for the moment.  

This section contains a couple of technical preliminaries needed to state the evolution problem in a functional analytic framework. We use standard methods to handle the well-posedness of a coupled set of SDEs for the agents dynamics, also linked to a linear parabolic equation describing the motion of the smoke.

\subsection{Technical preliminaries, notation and assumptions}\label{sectio_pre}
\subsubsection{Geometry}

We consider a two dimensional domain, which we refer to as $\tilde{\Omega}$. This domain presents the geometry of the evacuation scenario. In addition, as a building geometry, parts of the domain are filled with obstacles ($G_1$ and $G_2$) denoted by $G := G_1 \cup G_2$ and the fire denoted by $\tilde{F}$. Moreover, the domain has exits denoted by $E$.
Let $\Omega:=\tilde{\Omega}\backslash(G \cup E \cup \tilde{F}) \subset \mathbb{R}^d$ for $d=2$ and $\partial \Omega$ be $C^2$, or at least satisfying the exterior sphere condition. A typical example of such $\Omega$ is depicted in Figure~\ref{fig:example_again}.

\begin{figure}[ht]
    \centering
    \includegraphics[width=0.5\textwidth]{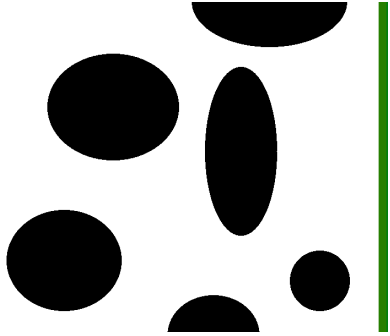}
    \caption{Basic geometry for our case study cf. Model 1. Obstacles are colored black, while the exit is colored green.}
    \label{fig:example_again}
\end{figure}


\subsubsection{Function spaces}

In this section, we employ a number of Sobolev spaces; see e.g.  \cite{Adams03}, \cite{Evans1997} 
for details on their definition and properties. 

The space $H^m(\Omega)$, $m\in \mathbb{N}$, is endowed with the norm
\begin{eqnarray}
\|v\|_{H^m(\Omega)}^2 :=\left(\sum_{|\alpha|\leq m}\int_{\Omega}|D^{\alpha}v|^2dx\right)^{1/2}  \textrm{ for all }v \in H^m(\Omega),\nonumber
\end{eqnarray}
while for the space 
$W^{m,\infty}(\Omega)$ we consider  the norm
\begin{eqnarray}
\|v\|_{m,\infty}(\Omega) := \sum_{|\alpha|\leq k}\textrm{ess} \sup_{\Omega}|D^{\alpha}v| \textrm{ for all } v \in W^{m,\infty}(\Omega), m \in \mathbb{N}; k = 0,\ldots,m.\nonumber
\end{eqnarray}


Our analysis of the stochastic differential equations (SDEs) describing the evolution of our populations follows the line of reasoning from \cite{Flandoli95} and \cite{Prato14} (the compactness method of SPDEs and martingale solutions). We refer to \cite{Evans2013} and \cite{Pavliotis2014} for the basic concepts and usual notations.



Let $\textbf{x}_t$ be a continuous-time stochastic process. We define the family of laws
\begin{eqnarray}\label{laws}
\left\{Q(\textbf{x}_t^n); t \geq 0, n\geq 1\right\}.
\end{eqnarray}
This is a family of probability distribution of $\textbf{x}_t^n$.

Recall the classical Ascoli-Arzel\`{a} theorem:\\
A family of functions $F \subset C([0,T]; \mathbb{R}^d)$ is relatively compact (in uniformly topology) if
\begin{enumerate}
	\item[i.] for every $t \in [0,T]$, the set $\{f(t);f \in F\}$ is bounded.
	\item[ii.] for every $\varepsilon>0$ and $t,s \in [0,T]$ there is $\delta > 0$ such that
	\begin{eqnarray}
	|f(t)-f(s)| \leq \varepsilon,
	\end{eqnarray}
	whenever $|t - s| \leq \delta$ for all $f \subset F$.
\end{enumerate}
We introduce the definition of H\"{o}lder seminorms, for $f: [0,T] \to \mathbb{R}^d$ as
\begin{eqnarray}\label{holder_norm}
[f]_\alpha = \sup_{t\neq s}\frac{|f(t)-f(s)|}{|t-s|}, 
\end{eqnarray}
and the supremum norm as
\begin{eqnarray}\label{infty_norm}
\|f\|_{\infty} = \sup_{t \in [0,T]}|f(t)|.
\end{eqnarray} Using  Ascoli-Arzel\`{a} theorem, starting from the facts:
\begin{enumerate}
	\item[i'.] there is $M_1>0$ such that $\|f\|_{\infty}\leq M_1$ for all $f \in F$.
	\item[ii'.] for some $\alpha \in (0,1)$, there is an $M_2>0$ such that $[f]_{C^\alpha} \leq M_2$ for all $f \in F$,
\end{enumerate}
we infer that the set
\begin{eqnarray}\label{relativelycompactKMP}
K_{M_1M_2} = \left\{f\in C([0,T];\mathbb{R}^d); \|f\|_{\infty}\leq M_1, [f]_{C^\alpha} \leq M_2\right\}
\end{eqnarray}
is relatively compact in $C([0,T]; \mathbb{R}^d)$.

For $\alpha \in (0,1)$, $T>0$ and $p>1$, the space 
$W^{\alpha,p}(0,T;\mathbb{R}^d)$ is defined as the set of all $f \in L^p(0,T;\mathbb{R}^d)$ such that
\begin{eqnarray}
[f]_{W^{\alpha,p}}:= \int_{0}^{T}\int_{0}^{T}\frac{|f(t) - f(s)|^p}{|t-s|^{1 + \alpha p}}dtds < \infty. \nonumber
\end{eqnarray}
This space is endowed with the norm
\begin{eqnarray}
\|f\|_{W^{\alpha,p}} = \|f\|_{L^p} + [f]_{W^{\alpha,p}}.\nonumber
\end{eqnarray}
Moreover, we know that if $\alpha p> 1$, then 
\begin{eqnarray}
W^{\alpha,p}(0,T; \mathbb{R}^d) \subset C^\gamma([0,T];\mathbb{R}^d) \quad \textrm{ for } (\alpha p - \gamma) >1\nonumber
\end{eqnarray}
and $[f]_{C^\gamma} \leq C_{\gamma,\alpha,p}\|f\|_{W^{\alpha,p}}$. Relying on the Ascoli-Arzel\`{a} theorem, we have the following situation:
\begin{enumerate}
	\item[ii''.] for some $\alpha \in (0,1)$ and $p>1$ with $\alpha p>1$, there is $M_2>0$ such that $[f]_{W^{\alpha,p}} \leq M_2$ for all $f \in F$. 
\end{enumerate}
If $\textrm{i'}$ and $\textrm{ii''}$ hold, then the set
\begin{eqnarray}\label{relativelycompactKMP_2}
K'_{M_1M_2} = \left\{f \in C([0,T]; \mathbb{R}^d); \|f\|_{\infty} \leq M_1, [f]_{W^{\alpha,p}} \leq M_2\right\}
\end{eqnarray}
is relatively compact in $C([0,T]; \mathbb{R}^d)$, if $\alpha p>1$.

\subsubsection{Hypotheses}

In this framework, we require the following assumptions:
\begin{enumerate}
	\item [($\textrm{A}_1$)] $\phi\in C^2(\Omega)$ (see also section \ref{eiko}).
	\item [($\textrm{A}_2$)] $p_{\max}=N|\Omega| <\infty$ (bounded maximal discomfort).
	\item [($\textrm{A}_3$)] The smoke matrix diffusion coefficient $D = \textbf{D}(x) \in W^{m,\infty}(\Omega)$ satisfies the uniform ellipticity condition, i.e. there exists positive constants $\underline{\theta}, \overline{\theta}$ such that
	$$\underline{\theta}|\xi|^2 \leq \textbf{D}(x)\xi_i\xi_j \leq \overline{\theta}|\xi|^2 \textrm{ for any } \xi \in \Omega.$$
	\item [($\textrm{A}_4$)] The smoke interface exchange coefficient on the boundary of our domain $\lambda:=\Lambda(x) \in W^{m,\infty}(\partial \Omega)$ is such that  there exist positive constants $\underline{\gamma}, \overline{\gamma}$ satisfying 
	$$-\underline{\gamma}|\xi|^2 < \lambda(x)\xi_i\xi_j \leq \overline{\gamma}|\xi|^2 \textrm{ for any } \xi \in \partial \Omega.$$
\end{enumerate}
Changing the functional framework will naturally lead to a reconsideration of these assumptions.

\subsubsection{First-order social agents dynamics}

We focus on the interaction between two groups of pedestrians, one familiar (active agents) and one unfamiliar (passive agents, visitors) with the geometry. To keep the presentation simple, we decide to tackle here the case when both active and passive agents follow a first-order dynamics (overdamped Langevin equations). To this end, we modify the dynamics of the passive agents, deviating this way from Model 1. 


Let $\textbf{x}_{a_i}$ denote the position of the agent $i$ from group $A$ at time $t$. The crowd dynamics in group A is expressed by the first-order differential equation encoding optimal environment knowledge within the domain $\Omega$, viz.
\begin{align}\label{micro_eqn1}
\begin{cases}
\frac{d\textbf{x}_{a_i}(t)}{dt} = -\Upsilon(s(\textbf{x}_{a_i},t))\left(\frac{\nabla \phi(\textbf{x}_{a_i})}{\|\nabla \phi(\textbf{x}_{a_i})\|}\right)(p_{\max} - p(\textbf{x}_{a_i},t)),\\
\textbf{x}_{a_i}(0) = \textbf{x}_{a_i,0},
\end{cases}
\end{align}
where $p_{\max}$ is a discomfort threshold proportional to the overall population size, say $p_{\max} = N|\Omega|$, with $N := N_A + N_B$ and $p(\textbf x,t)$ is the local discomfort (realization of social pressure) so that  
\begin{align}\label{pressure}
p(\textbf x,t)=\int_{\Omega\cap B(\textbf x,\tilde{\delta})}\sum_{j=1}^{N}\delta(z - \textbf{x}_{c_j}(t))dz.
\end{align}
In \eqref{pressure}, $\delta$ is the Dirac (point) measure and $B(\textbf{x},\tilde{\delta})$ is a ball center $\textbf{x}$ with small enough radius $\tilde{\delta}$ such that $\tilde{\delta} >0$. Hence, the discomfort $p(\textbf{x},t)$ represents a finite measure on the bounded set $\Omega \cap B(\textbf{x},\tilde{\delta})$.
In addition, we assume the following structural relation between the smoke extinction and the walking speed:
\begin{align}
\Upsilon(s(\textbf x,t)) = - a s(\textbf x,t) +  b,\nonumber
\end{align}
where $a, b$ are given positive numbers. Note that every member of this group wants to follow the motion path explicitly  given by $\nabla\phi$ (with $\phi$ the potential function solving the Eikonal equation), which minimizes the distance between particle positions $\textbf{x}_{a_i}$ and the exit location $E$.


As mentioned before, concerning the second population, since the agents do not know the geometry, they must rely on the information from their neighbour. The unfamiliarity with the local environment is expressed here by means of a Brownian motion term $B_i$. Moreover, the passive agents like to be stay away the fire -- for this to happen we use a repulsion term pinpointing to the location of the fire source $\nabla H_\epsilon$. Hence, the dynamics is described as a stochastic differential equation as follows
\begin{align}\label{micro_eqn2}
\begin{cases}
\frac{d\textbf{x}_{b_k}(t)}{dt} = \sum_{j = 1}^N\frac{(\textbf{x}_{c_j} - \textbf{x}_{b_k})}{\|\textbf{x}_{c_j} - \textbf{x}_{b_k}\|}w(\hat{\delta},s(\textbf{x}_{b_k},t)) - \nabla H_\epsilon(\textbf{x}_{b_k},t) + \tilde{D}B_k(t),\\
\textbf{x}_{b_k}(0) = \textbf{x}_{b_k,0}.
\end{cases}
\end{align}
Here $\tilde{D}$ is the constant diffusion coefficient matrix, while $\hat{\delta} \sim \|\textbf{x}_{c_j} - \textbf{x}_{b_k}\|$, and $w$ is a weight factor decreasing as a function of distance. They are defined as
\begin{align}\label{weight_factor}
w(x,y) \sim \frac{1}{r_s^2}\exp\left(-\frac{(x-y)^2}{r_s^2}\right).
\end{align}
In \eqref{weight_factor}, $r_s$ is the sight radius in the evacuees location. Since $H$ is in general not differentiable everywhere (cf. e.g. \eqref{H_def}), in order to be able to take the gradient of $H$, we consider from the start a mollified $H$, say $H_\epsilon$.
 Furthermore, note that $\tilde{D}$ can in principle also depend on the space position. This way the random effects can be skipped in the regions where the geometry is not available for walking. It is worth noting that we have many ways to express how the active agents sense the fire. We choose here to introduce the fire location as a region to be avoided and impose it in the definition domain of the Eikonal equation. It is worth comparing this model for the evolution of the passive agents and the one prescribed in Model 1. Notice here the following important aspects: not only the dynamics is over-damped, but also the expression of the social velocity is slightly adapted to avoid an implicit definition. 

\subsection{Well-posedness} 

Our evolution system consists of an ODE \eqref{micro_eqn1} coupled to an SDE \eqref{micro_eqn2}. Therefore, due to the randomness incorporated  in the SDE \eqref{micro_eqn2}, the ODE becomes an SDE after coupling. So,  we can consider \eqref{micro_eqn1} and \eqref{micro_eqn2} as a SDE system. Note that this system is one-way coupled with the reaction-diffusion-drift equation describing the smoke evolution. 

For convenience, we rephrase the solution to the system \eqref{micro_eqn1} and \eqref{micro_eqn2} in terms of the vector $\textbf{x}_t^n$ such that
\begin{align}
\textbf x_t^n &= ({\textbf x}_{a_i}^n(t),{\textbf x}_{b_k}^n(t)),\\ F_1({\textbf x}_t^n,t) &:= -\Upsilon(s({\textbf x}_{a_i}^n,t))\frac{\nabla \phi({\textbf x}_{a_i}^n)}{\|\nabla \phi({\textbf x}_{a_i}^n)\|}(p_{\max} - p({\textbf x}_{a_i}^n,t)),\\
F_2({\textbf x}_t^n,t) &:= \sum_{j = 1}^N\frac{({\textbf x}_{c_j}^n - {\textbf x}_{b_k}^n)}{\|{\textbf x}_{c_j}^n - {\textbf x}_{b_k}^n\|}w(\hat{\delta},s({\textbf x}_{b_k}^n,t)) - \nabla H_\epsilon({\textbf x}_{b_k}^n,t).
\end{align}
Furthermore, we set
\begin{align}
b_n(\textbf{x}_t^n,t):= \begin{bmatrix}
F_1(\textbf{x}_t^n,t)\\F_2(\textbf{x}_t^n,t)
\end{bmatrix} \textrm{ and } 
\tilde{\sigma}:=\begin{bmatrix}
\tilde{0}\\\tilde{D}
\end{bmatrix},
\end{align}
with
\begin{align}
\tilde{0}:= \begin{bmatrix}
0 &0\\
0 &0
\end{bmatrix} \quad \text{ and } \tilde{D}:=\begin{bmatrix}
D_{11} & D_{12}\\D_{21} & D_{22}
\end{bmatrix}
\end{align}
and initial datum 
\begin{align}
\textbf{x}_0^n := \begin{bmatrix}
\textbf{x}_{a_i,0}^n\\ \textbf{x}_{b_k,0}^n
\end{bmatrix}.
\end{align}
In this section, we use the compactness method for proving the existence of solutions; we follow the arguments  by G. Da Prato and J. Zabczyk  ($2014$) (cf. \cite{Prato14}, Section $8.3$) and a result of F. Flandoli (1995) (cf. \cite{Flandoli95}) for martingale solutions. The starting point of this argument is based on considering a sequence $\{\textbf{x}_t^n\}$ of solutions of the following stochastic differential equation
\begin{align}\label{approxi_formsde}
\begin{cases}
d\textbf{x}_t^n = b_n(\textbf{x}_t^n,t) dt + \tilde{\sigma} dB_t\\
\textbf{x}_0^n = \textbf{x}_0^n
\end{cases}
\end{align}
To ensure the applicability of the compactness argument, we need the following structural assumptions:
\begin{enumerate}
	\item[($\textrm{A}_5$)] $b_n$ be a consequence of continuous functions and uniformly Lipschitz in $x$.
\item[($\textrm{A}_6$)] $b_n$ be equi-bounded $\|b_n\|_{\infty} \leq C$.
\end{enumerate}

It is not difficult to see that in our case, Assumptions ($\textrm{A}_4$) and ($\textrm{A}_5$) are fulfilled. By $s\in C([0,T];C^1(\Omega))$ from Remark \ref{rm_C1}, we have $\tilde{v}_s$ Lipschitz in $x$. Moreover, by the Assumption $(\text{A}_1)$, we obtain $\nabla\phi$ is Lipschitz for $x\in \Omega$. On the other hand, the term $p_{\max} - p(\textbf{x}_{a_i},t)$ is a finite measure on bounded sets -- it is automatically Lipschitz. These considerations lead to the fact that $F_1$ is Lipschitz in $x\in \Omega$. In addition, by $(\text{A}_2)$ together with taking $H_\epsilon$ (as a mollified $H$) implies that  $\nabla H_\epsilon$ is uniformly Lipschitz in $x\in \Omega$. By the formula \eqref{weight_factor}, the weight factors are Lipschitz in $x \in \Omega$. Thus, $F_2$ inherites the Lipschitz property. Clearly, from these arguments, we obtain not only that $F_1$ and  $F_2$ are Lipschitz, but also that these functions are equibounded $\|F_1\|_{\infty} \leq C$ and $\|F_2\|_{\infty} \leq C$. Hence, we have $b_n$ satisfying both assumptions ($\textrm{A}_4$) and ($\textrm{A}_5$).

The compactness argument proceeds as follows. We begin with solutions $\textbf{x}_t^n, n \in \mathbb{N}$ of the system \eqref{micro_eqn1} and \eqref{micro_eqn2}, describing in \eqref{approxi_formsde}. The construction of these solutions can be investigated on a probability space $(\Omega,\mathcal{F}, P)$ with a filtration $\{\mathcal{F}_t\}$ and a Brownian motion $B(t)$. Next, let $Q^n$ be the laws of $\textbf{x}_t^n$ which is defined cf. \eqref{laws}. Then, by using Prokhorov's theorem, we show that the sequence of laws $\{Q^n(\textbf{x}_t^n)\}$ is weakly convergent to $Q(\textbf{x}_t)$ in $C([0,T]; \mathbb{R}^d)$. Then, by using the \textquotedblleft Skorohod representation Theorem\textquotedblright, the weak convergence is in a new probability space with a new stochastic process, for a new filtration. This leads to some arguments for weak convergence results of two stochastic processes in two different probability spaces that we need to use to obtain the existence of our SDE system. Finally, we prove the uniqueness of solutions to our system. 

Let us start with handling the tightness of the laws $\{Q^n\}$ through the following lemma.
\begin{lemma}\label{tightness}
Assume ($\textrm{A}_4$) and ($\textrm{A}_5$) hold.	The family of $\{Q^n\}$ is tight in $C([0,T];\mathbb{R}^d)$
\end{lemma}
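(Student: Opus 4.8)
The plan is to prove tightness of the family of laws $\{Q^n\}$ in $C([0,T];\mathbb{R}^d)$ by exhibiting, for each $\varepsilon>0$, a compact set $K\subset C([0,T];\mathbb{R}^d)$ with $Q^n(K)\geq 1-\varepsilon$ for all $n$. The natural candidates for these compacts are the sets $K'_{M_1M_2}$ introduced in \eqref{relativelycompactKMP_2}, which are relatively compact whenever $\alpha p>1$; thus it suffices to obtain uniform-in-$n$ moment bounds of the form $\mathbb{E}\|\textbf{x}^n\|_\infty^p\leq C$ and $\mathbb{E}[\textbf{x}^n]^p_{W^{\alpha,p}}\leq C$, and then to invoke Markov's inequality together with a standard diagonal/union argument to select $M_1,M_2$ large enough so that the complement of $K'_{M_1M_2}$ has $Q^n$-measure below $\varepsilon$.

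First I would decompose the solution of \eqref{approxi_formsde} into its drift part $\int_0^t b_n(\textbf{x}_s^n,s)\,ds$ and its martingale (stochastic integral) part $\tilde{\sigma}B_t$. For the drift part, assumption $(\textrm{A}_6)$ (equi-boundedness $\|b_n\|_\infty\leq C$) immediately gives a deterministic Lipschitz-in-time bound: $|\int_s^t b_n\,du|\leq C|t-s|$, which controls both the sup norm (on $[0,T]$) and every H\"older/Sobolev-Slobodeckij seminorm of that term uniformly in $n$. For the Brownian part, since $\tilde{\sigma}$ is a constant matrix independent of $n$, the process $\tilde{\sigma}B_t$ does not depend on $n$ at all, and the classical Kolmogorov continuity estimate gives $\mathbb{E}|\tilde{\sigma}B_t-\tilde{\sigma}B_s|^p\leq C_p|t-s|^{p/2}$; choosing $p>2$ and $\alpha\in(1/p,1/2)$ one gets $\alpha p>1$ while $\mathbb{E}[\tilde{\sigma}B]^p_{W^{\alpha,p}}=\int_0^T\int_0^T \mathbb{E}|\tilde{\sigma}B_t-\tilde{\sigma}B_s|^p/|t-s|^{1+\alpha p}\,dt\,ds<\infty$ by a direct computation (the exponent $p/2-1-\alpha p>-1$ makes the double integral converge). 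Adding the initial datum $\textbf{x}_0^n$ (assumed bounded uniformly in $n$, so that $\mathbb{E}|\textbf{x}_0^n|^p\leq C$), one assembles the two required uniform moment bounds.

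The remaining step is the selection argument: given $\varepsilon>0$, pick $M_1$ with $C/M_1^p<\varepsilon/2$ so that $Q^n(\|\textbf{x}^n\|_\infty>M_1)<\varepsilon/2$ by Markov, and similarly $M_2$ with $Q^n([\textbf{x}^n]_{W^{\alpha,p}}>M_2)<\varepsilon/2$; then $Q^n(C([0,T];\mathbb{R}^d)\setminus K'_{M_1M_2})<\varepsilon$ for every $n$, and since $K'_{M_1M_2}$ is relatively compact (its closure is compact), tightness follows from the definition. I expect the main obstacle to be purely technical rather than conceptual: verifying that the Slobodeckij seminorm estimate for the stochastic integral is genuinely uniform and that the chosen triple $(\alpha,p,\gamma)$ simultaneously satisfies $\alpha p>1$, $p>2$, and the embedding constraint $\alpha p-\gamma>1$ used earlier; once the exponents are pinned down, the Burkholder--Davis--Gundy inequality (here trivialized because $\tilde\sigma$ is constant) and Fubini's theorem do the rest. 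A secondary subtlety worth a sentence is that the drift $b_n$ depends on the smoke field $s$ and on $\nabla\phi$, $\nabla H_\epsilon$; but under $(\textrm{A}_1)$--$(\textrm{A}_4)$ these were already shown in the text to render $b_n$ equi-bounded and uniformly Lipschitz, so no new estimate on the smoke equation is needed for tightness.
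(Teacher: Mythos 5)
Your proposal is correct and follows essentially the same route as the paper's proof: the same decomposition into a uniformly bounded drift part and the constant-coefficient stochastic integral, the same relatively compact sets $K'_{M_1M_2}$ from \eqref{relativelycompactKMP_2}, Markov's inequality for both the sup-norm and the $W^{\alpha,p}$-seminorm events, and the Burkholder--Davis--Gundy bound $E[|\textbf{x}_t^n-\textbf{x}_r^n|^p]\leq C(t-r)^{p/2}$ with the exponent choice $p>2$, $\alpha<\tfrac12$, $\alpha p>1$. The only cosmetic difference is that you control the drift increments by the deterministic Lipschitz bound $C|t-s|$ up front, whereas the paper estimates the $p$-th power of the integral directly; the content is identical.
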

\begin{proof}
	In order to prove the tightness, let us recall the following compact sets $K_{M,P}$ (as in the preliminaries section \ref{sectio_pre}):
	\begin{align}
	K_{M_1M_2}=\left\{f \in C([0,T];\mathbb{R}^d); \|f\|_{\infty} \leq M_1, [f]_{C^{\alpha}} \leq M_2\right\}\nonumber
	\end{align}	
	Now, we will show that for a given $\varepsilon >0$, there are $M_1, M_2 > 0$ such that
	\begin{align}
	P(\textbf{x}_{\cdot}^n \in K_{M_1M_2}^c) < \varepsilon, \text{ for all } n\in \mathbb{N}.\nonumber
	\end{align}
	This means that 
	\begin{align}
	P(\|\textbf{x}_{\cdot}^n\|_{\infty} > M_1 \text{ or } [\textbf{x}_{\cdot}^n]_{C^{\alpha}} > M_2) < \varepsilon.\nonumber
	\end{align}
	A sufficient condition is
	\begin{align}\label{pair1}
	P(\|\textbf{x}_{\cdot}^n\|_{\infty} > M_1) < \frac{\varepsilon}{2} \text{ and } P([\textbf{x}_{\cdot}^n]_{C^{\alpha}} > M_2) < \frac{\varepsilon}{2}.
	\end{align}
	Now, we consider the first one $P(\|\textbf{x}_{\cdot}^n\|_{\infty} > M_1) < \frac{\varepsilon}{2}$. Using Markov's inequality (cf. \cite{Jacod2004}, Corollary 5.1), we get
	\begin{align}
	P(\|\textbf{x}_{\cdot}^n\|_{\infty} > M_1) \leq \frac{1}{M_1}E\left[\sup_{t \in [0,T]}\left|\textbf{x}_t^n\right|\right],\nonumber
	\end{align}
	but
	\begin{align}
	\sup_{t \in [0,T]}\left|\textbf{x}_t^n\right| = \sup_{t \in [0,T]}\left\{\left|\textbf{x}_{a_i,0}^n + \int_{0}^{t}F_1(\textbf{x}_y^n,y)dy\right|,\left|\textbf{x}_{b_k,0}^n + \int_{0}^{t}F_2(\textbf{x}_y^n,y)dy+\int_{0}^{t}\tilde{\sigma} dB_y\right|\right\}.\nonumber
	\end{align}
	We estimate
	\begin{align}
	\sup_{t \in [0,T]}\left|\textbf{x}_t^n\right| \leq \sup_{t \in [0,T]}\left\{|\textbf{x}_{a_i,0}^n| + \left|\int_{0}^{t}F_1(\textbf{x}_y^n,y)dy\right|,|\textbf{x}_{b_k,0}^n|+ \left|\int_{0}^{t}F_2(\textbf{x}_y^n,y)dy\right|+\left|\int_{0}^{t}\tilde{\sigma} dB_y\right|\right\}\nonumber
	\end{align}
	Since $F_1,F_2$ bounded, then we have
	\begin{align*}
	\int_{0}^T\left|F_1(\textbf{x}_y^n,y)\right|dy &= \int_{0}^{T}\left|-\Upsilon(s({\textbf x}_{a_i}^n,y))\left(\frac{\nabla \phi({\textbf x}_{a_i}^n)}{\|\nabla \phi({\textbf x}_{a_i}^n)\|}\right)(p_{\max} - p({\textbf x}_{a_i}^n,y))\right|dy \leq C,\\
	\int_{0}^T\left|F_2(\textbf{x}_y^n,y)\right|dy &= \int_{0}^T\left|\sum_{j = 1}^N\frac{({\textbf x}_{c_j}^n - {\textbf x}_{b_k}^n)}{\|{\textbf x}_{c_j}^n - {\textbf x}_{b_k}^n\|}w(\hat{\delta},s({\textbf x}_{b_k}^n,y)) - \nabla H_\epsilon({\textbf x}_{b_k}^n,y)\right|dy \leq C.
	\end{align*}
	Taking the expectation, we have the following estimate
	\begin{align}
	E\left[\sup_{t \in [0,T]}|\textbf{x}_t^n|\right]\leq C.\nonumber
	\end{align}
	Hence, for $\varepsilon > 0$, we can choose $M_1>0$ such that $P(\|\textbf{x}_\cdot^n\|_{\infty} > M_1) < \frac{\varepsilon}{2}$.
	
	From now on, we consider the second inequality in \eqref{pair1}. This  reads
	\begin{align}
	P([\textbf{x}_{\cdot}^n]_{C^{\alpha}} > M_2)=P\left(\sup_{t \neq r} \frac{|\textbf{x}_t^n - \textbf{x}_t^r|}{|t - r|} > M_2\right) \leq \frac{\varepsilon}{2}. \nonumber
	\end{align}
	Let us introduce another class of compact sets now in the Sobolev space $W^{\alpha,p}(0,T; \mathbb{R}^d)$ (which for suitable exponents lies in  $C^{\gamma}([0,T], \mathbb{R}^d)$).
	Additionally, we recall the relatively compact sets $K'_{M_1M_2}$ in \eqref{relativelycompactKMP_2} such that
	\begin{align}
	K'_{M_1M_2} = \left\{f \in C([0,T];\mathbb{R}^d); \|f\|_{\infty} \leq M_1, [f]_{W^{\alpha,p}} \leq M_2 \right\}.\nonumber
	\end{align}
	A sufficient condition for $K'_{M_1M_2}$ to be relatively compact in the underlying space  is $\alpha p > 1$.
	Having this in mind, we wish to prove that there exist $\alpha \in (0,1)$ and $p > 1$ with $\alpha p > 1$ together with the property: given $\varepsilon > 0$, there is $M>0$ such that
	\begin{align}
	P([\textbf{x}_{\cdot}^n]_{W^{\alpha,p}}> M_2) < \frac{\varepsilon}{2},\nonumber
	\end{align}
	for every $n \in \mathbb{N}$. 
	
	Using Markov's inequality, 
	we obtain
	\begin{align}
	P([\textbf{x}_{\cdot}^n]_{W^{\alpha,p}} > M_2) &\leq \frac{1}{M}E\left[\int_0^T\int_0^T \frac{|\textbf{x}_t^n - \textbf{x}_r^n|^p}{|t - r|^{1 + \alpha p}}dtdr\right] \nonumber\\
	&= \frac{C}{M}\int_0^T\int_0^T\frac{E\left[|\textbf{x}_t^n - \textbf{x}_r^n|^p\right]}{|t - r|^{1+\alpha p}}dtdr.\nonumber
	\end{align}
	For $t \geq r$, we have
	\begin{align}
	\textbf{x}_t^n - \textbf{x}_r^n = \begin{pmatrix}
	\int_{r}^{t}F_1(\textbf{x}_y^n,y)dy \\
	\int_{r}^{t}F_2(\textbf{x}_y^n,y)dy
	\end{pmatrix} + \begin{pmatrix}
	0\\ \int_{r}^{t}\tilde{\sigma} dB_y
	\end{pmatrix}.\nonumber
	\end{align}
	Let us introduce some further notation.  For a vector $u = (u_1,u_2)$, we set  $|u| := |u_1|+|u_2|$. At this moment, we consider the following expression:
	\begin{align}\label{modulus_def}
	|\textbf{x}_t^n - \textbf{x}_r^n| = \left|\int_{r}^{t}F_1(\textbf{x}_y^n,y)dy\right| + \left|\int_{r}^{t}F_2(\textbf{x}_y^n,y)dy+\int_{r}^{t}\tilde{\sigma} dB_y\right|.
	\end{align}
	Taking the modulus up to the power $p>1$, \eqref{modulus_def} reads
	\begin{align}\label{modulus_p}
	|\textbf{x}_t^n - \textbf{x}_r^n|^p &= \left( \left|\int_{r}^{t}F_1(\textbf{x}_y^n,y)dy\right| + \left|\int_{r}^{t}F_2(\textbf{x}_y^n,y)dy+\int_{r}^{t}\tilde{\sigma} dB_y\right|\right)^p\nonumber\\
	&\leq \left|\int_{r}^{t}F_1(\textbf{x}_y^n,y)dy\right|^p + \left|\int_{r}^{t}F_2(\textbf{x}_y^n,y)dy\right|^p + \left|\int_{r}^{t}\tilde{\sigma} dB_y\right|^p\nonumber\\
	&\leq \int_{r}^{t}\left|F_1(\textbf{x}_y^n,y)\right|^pdy+\int_{r}^{t}\left|F_2(\textbf{x}_y^n,y)\right|^pdy+\left|\int_{r}^{t}\tilde{\sigma} dB_y\right|^p\nonumber\\
	&\leq C(t-r)^p +\left|\int_{r}^{t}\tilde{\sigma} dB_y\right|^p.
	\end{align}
	Taking the expectation on \eqref{modulus_p}, we obtain the following estimate
	\begin{align}\label{expectation_modu}
	E[|\textbf{x}_t^n - \textbf{x}_r^n|^p] \leq C(t-r)^p +E\left[\left|\int_{r}^{t}\tilde{\sigma} dB_y\right|^p\right].
	\end{align}
	Now, we consider the second term of the right hand side of \eqref{expectation_modu}. By using the Burkholder-Davis-Gundy inequality (cf. \cite{Prato14}, Hypothesis 6.4), we obtain
	\begin{align}\label{Bur_ine}
	E\left[\left|\int_{r}^{t} \tilde{\sigma} dB_y\right|^p\right] \leq CE\left[\left(\int_{r}^{t} dy\right)^{p/2}\right] \leq C(t - r)^{p/2}.
	\end{align} 
	Combining \eqref{expectation_modu} and \eqref{Bur_ine}, we have the upper bound 
	\begin{align}
	E[|\textbf{x}_t^n - \textbf{x}_r^n|^p] \leq C(t - r)^{p/2}.\nonumber 
	\end{align}
	On the other hand, the integral $$ \int_{0}^{T}\int_{0}^{T} \frac{1}{|t - r|^{1 + (\alpha - \frac{1}{2})p}}dtdr$$ is finite if $\alpha < \frac{1}{2}$. Consequently,  we can pick $\alpha < \frac{1}{2}$. Taking now $p>2$ together with the constraint $\alpha p > 1$, we can find $M_2 > 0$ such that 
	\begin{align}
	P\left([\textbf{x}_{\cdot}^n]_{W^{\alpha,p}} > M_2\right) < \frac{\varepsilon}{2}.\nonumber
	\end{align}
	This complete the proof of the Lemma.
\end{proof}
\begin{theorem}
	Assume $(\text{A}_1)$ and $(\text{A}_2)$ hold. There exits a solution of the microscopic dynamics SDE system \eqref{micro_eqn1} and \eqref{micro_eqn2}.
\end{theorem}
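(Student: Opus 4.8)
The plan is to follow the scheme sketched before the statement, i.e. the compactness argument of Da Prato--Zabczyk \cite{Prato14} together with Flandoli's result \cite{Flandoli95} on martingale solutions, with Lemma~\ref{tightness} as the central a priori estimate. First I would observe that hypotheses $(\text{A}_1)$ and $(\text{A}_2)$ — combined with the mollification $H_\epsilon$ and the regularisation of the singular unit--vector fields and of the measure--valued discomfort $p$ — are precisely what is needed to produce drift coefficients $b_n$ satisfying the structural assumptions $(\text{A}_5)$ and $(\text{A}_6)$, namely uniformly Lipschitz in $x$ and equi-bounded. For each fixed $n$ the approximating equation \eqref{approxi_formsde} then has constant diffusion $\tilde\sigma$ and globally Lipschitz, bounded drift, so the classical It\^o theory yields a unique strong solution $\textbf{x}_t^n$ with a.s.\ continuous paths on the reference space $(\Omega,\mathcal F,\{\mathcal F_t\},P)$ driven by $B_t$; denote its law on $C([0,T];\mathbb{R}^d)$ by $Q^n$ as in \eqref{laws}.

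Next, Lemma~\ref{tightness} gives tightness of $\{Q^n\}$ in $C([0,T];\mathbb{R}^d)$, so Prokhorov's theorem yields a subsequence (not relabelled) and a probability measure $Q$ with $Q^n\rightharpoonup Q$. By the Skorokhod representation theorem there exist a new probability space $(\bar\Omega,\bar{\mathcal F},\bar P)$ and processes $\bar{\textbf{x}}^n,\bar{\textbf{x}}$ on it with $\mathcal L(\bar{\textbf{x}}^n)=Q^n$, $\mathcal L(\bar{\textbf{x}})=Q$ and $\bar{\textbf{x}}^n\to\bar{\textbf{x}}$ $\bar P$-a.s.\ in $C([0,T];\mathbb{R}^d)$. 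In parallel I would transfer the driving noises, obtaining Brownian motions $\bar B^n$ (in the filtration generated by $(\bar{\textbf{x}}^n,\bar B^n)$) that drive $\bar{\textbf{x}}^n$, pass to a further limit $\bar B^n\to\bar B$, and use the usual L\'evy--characterisation/martingale argument to show that $\bar B$ is a Brownian motion with respect to $\bar{\mathcal F}_t:=\sigma(\bar{\textbf{x}}_s,\bar B_s;\,s\le t)$.

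It then remains to pass to the limit in the integral identity
\[\bar{\textbf{x}}_t^n=\bar{\textbf{x}}_0^n+\int_0^t b_n(\bar{\textbf{x}}_s^n,s)\,ds+\int_0^t\tilde\sigma\,d\bar B_s^n\]
term by term: the initial data converge provided $\textbf{x}_0^n\to\textbf{x}_0$; the Lebesgue integral converges by dominated convergence using the a.s.\ uniform convergence $\bar{\textbf{x}}^n\to\bar{\textbf{x}}$, the local uniform convergence $b_n\to b$ and the uniform bound $\|b_n\|_\infty\le C$ from $(\text{A}_6)$; the stochastic integral converges via a Burkholder--Davis--Gundy estimate together with $\bar B^n\to\bar B$. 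One concludes $\bar{\textbf{x}}_t=\bar{\textbf{x}}_0+\int_0^t b(\bar{\textbf{x}}_s,s)\,ds+\int_0^t\tilde\sigma\,d\bar B_s$, i.e.\ $(\bar\Omega,\bar{\mathcal F},\{\bar{\mathcal F}_t\},\bar P,\bar B,\bar{\textbf{x}})$ is a (weak) solution of \eqref{micro_eqn1}--\eqref{micro_eqn2}. If one wants more, the Lipschitz bounds on $F_1,F_2$ recorded before Lemma~\ref{tightness} give pathwise uniqueness, and Yamada--Watanabe upgrades the construction to a unique strong solution.

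\textbf{Main obstacle.} I expect the delicate point to be the identification of the limit in the last step. Two items need care: first, the martingale/L\'evy argument that the limiting noise $\bar B$ is genuinely a Brownian motion in the enlarged filtration and that the quadratic--variation structure survives the limit — this is the technical heart of the compactness method; second, since the limiting drift $b$ may only be defined off a negligible set (zeros of $\nabla\phi$, the collision set of the swarming term, the jump set of the counting measure $p$ in \eqref{pressure}), one must check that the limit trajectory $\bar{\textbf{x}}$ spends zero Lebesgue time on that set, which relies either on non-degeneracy of $\tilde\sigma$ in the passive component or on an explicit occupation-time estimate. Everything else reduces to routine applications of Prokhorov, Skorokhod and Burkholder--Davis--Gundy as prepared in the preliminaries.
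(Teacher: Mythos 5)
Your proposal follows the same overall architecture as the paper's proof: verify that the drift coefficients are Lipschitz and equibounded so that $(\textrm{A}_5)$--$(\textrm{A}_6)$ hold, invoke Lemma~\ref{tightness} for tightness of the laws $\{Q^n\}$, apply Prokhorov's theorem and the Skorokhod representation theorem to obtain a.s.\ convergent copies $\tilde{\textbf{x}}_t^n \to \tilde{\textbf{x}}_t$ on a new probability space, and then identify the limit as a (weak/martingale) solution. The one place where you genuinely diverge is the identification step: you propose transferring the driving noises, passing to a limit $\bar B^n \to \bar B$, and running a L\'evy-characterisation/martingale argument to show the limiting noise is a Brownian motion in the enlarged filtration — which you correctly flag as the technical heart and the delicate point. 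The paper sidesteps exactly this by Bensoussan's device: it forms the bounded functional $\mathcal{M}_t^n/(\mathcal{M}_t^n+1)$ of the pair $(\textbf{x}_\cdot^n, B_\cdot)$, notes that its expectation vanishes on the original space because $\mathcal{M}_t^n = 0$ there, and transfers this identity to the Skorokhod copies purely from equality of the laws $Q^n$, concluding $\tilde{\mathcal{M}}_t^n = 0$ $\tilde P$-a.s.\ without ever characterising $\tilde B$ separately. Your route is the more classical one and makes the filtration issues explicit; the paper's is shorter but leans on the boundedness of the functional and the equality of joint laws. Your closing remarks on pathwise uniqueness plus Yamada--Watanabe go beyond what the theorem claims (the paper proves uniqueness separately in the subsequent proposition via a Gr\"onwall argument), and your caveat about the drift being undefined on a negligible set is a legitimate concern that the paper silently absorbs into the mollification of $H$ and the blanket Lipschitz claims for $F_1, F_2$.
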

\begin{proof}
	From Lemma \ref{tightness}, we have obtained that the sequence $\{Q^n\}$ is tight in $C([0,T];\mathbb{R}^d)$. Applying the Prokhorov's Theorem (cf. \cite{Billingsley1999}, Theorem $5.1$), there are subsequences $\{Q^{n_k}\}$ which converge weakly. For simplicity of the notation, we denote these subsequences by $\{Q^{n}\}$. This means that we have $\{Q^n\}$ converges weakly to some probability measure $Q$ on Borel sets in $C([0,T];\mathbb{R}^d)$.
	
	
	Since we have that $Q(\textbf{x}_t^n)$ converges weakly to $Q(\textbf{x}_t)$, by using the \textquotedblleft Skorohod Representation Theorem\textquotedblright \ (cf. \cite{Prato14}, Theorem $2.4$) , there exists a probability space $(\widetilde{\Omega}, \tilde{\mathcal{F}},\tilde{P})$ with the filtration $\{\tilde{\mathcal{F}}_t\}$ and $\tilde{\textbf{x}}_t^n$, $\tilde{\textbf{x}}_t$ belong to $C([0,T]; \mathbb{R}^d)$ with $n\in \mathbb{N}$, such that $Q(\tilde{\textbf{x}}) = Q(\textbf{x})$, $Q(\tilde{\textbf{x}}_t^n) = Q(\textbf{x}_t^n)$ with $n \in \mathbb{N}$, and $\tilde{\textbf{x}}_t^n \to \tilde{\textbf{x}}_t$ as $n \to \infty$, $\tilde{P}-$a.s. 
	
	By using this argument, we  get that $\tilde{\textbf{x}}_t^n$ converges to $\tilde{\textbf{x}}_t$ a.s. in the uniform topology on compacts sets, and then $\tilde{\textbf{x}}_t^n$ converges in probability towards $\tilde{\textbf{x}}_t$. It leads to
	\begin{align*}
	\int_{r}^{t}\tilde{b}_n(\tilde{\textbf{x}}_y^n,y)dy &\rightarrow \int_{r}^{t}\tilde{b}(\tilde{\textbf{x}}_y,y)dy
	\end{align*}
	in probability.
	To prove that these new processes satisfy the SDEs, we rely on an argument of Bensoussan cf. \cite{Bensoussan1995}. Essentially,  we need to check that the pair $(\tilde{\textbf{x}}_{\cdot}^n, \tilde{B}_{\cdot})$ satisfies the following equation 
	\begin{align}\label{equalweak1}
	\tilde{\textbf{x}}_t^n = \tilde{\textbf{x}}_0^n + \int_{0}^tb_n(\tilde{\textbf{x}}_y^n,y)dy + \int_0^t\tilde{\sigma}d\tilde{B}_y.
	\end{align}
	Let us call
	\begin{align}
	\tilde{\mathcal{M}}_t^n:=\tilde{\textbf{x}}_t^n - \tilde{\textbf{x}}_0^n - \int_{0}^tb_n(\tilde{\textbf{x}}_y^n,y)dy - \int_0^t\tilde{\sigma}d\tilde{B}_y.\nonumber
	\end{align}
	To prove \eqref{equalweak1}, we define the following equation
	\begin{align}
	\mathcal{M}_t^n:=\textbf{x}_t^n - \textbf{x}_0^n - \int_{0}^tb_n(\textbf{x}_y^n,y)dy - \int_0^t\tilde{\sigma}dB_y.\nonumber
	\end{align}
	Clearly, this definition implies $\mathcal{M}_t^n=0 \quad P \text{ a.s.}$. Hence, we have 
	$
	E\left[\frac{\mathcal{M}_t^n}{\mathcal{M}_t^n + 1}\right] = 0$. 
	Now, we want to check that
	\begin{align}
	\tilde{E}\left[\frac{\tilde{\mathcal{M}}_t^n}{\tilde{\mathcal{M}}_t^n + 1}\right] = 0.\label{MM}
	\end{align}
	Consider the fact that
	\begin{align}
	\frac{\mathcal{M}_t^n}{\mathcal{M}_t^n + 1} = \phi^n(\textbf{x}_{\cdot}^n,B_{\cdot}) \quad \text{ and } \frac{\tilde{\mathcal{M}}_t^n}{\tilde{\mathcal{M}}_t^n + 1} = \phi^n(\tilde{\textbf{x}}_{\cdot}^n,\tilde{B}_{\cdot}),\nonumber
	\end{align}
	where $\phi^n$ belong to $\mathcal{B}(E)$ which is the Borel sets of $E$ with $E := C([0,T]; \mathbb{R}^d)$.\\
	We note that
	\begin{align}
	\tilde{E}\left[\frac{\tilde{\mathcal{M}}_t^n}{\tilde{\mathcal{M}}_t^n + 1}\right] = \tilde{E}[\phi^n(\tilde{\textbf{x}}_{\cdot}^n,\tilde{B}_{\cdot})] = \int_{\mathcal{B}(E)}\phi^n dQ^n = E[\phi^n(\textbf{x}_{\cdot}^n,B_{\cdot})] = E\left[\frac{M_t^n}{\mathcal{M}_t^n + 1}\right].\nonumber
	\end{align}
	Thus, \eqref{MM} holds. 
	This implies $\tilde{\mathcal{M}}_t^n = 0 \quad \tilde{P} \text{ a.s.}$ Therefore, the new process, posed in the  new probability space, satisfies the SDE.\qed
\end{proof}
\begin{proposition}
	The solution of SDE system \eqref{micro_eqn1} and \eqref{micro_eqn2} is unique.
\end{proposition}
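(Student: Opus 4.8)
The plan is to prove \emph{pathwise uniqueness} by a Grönwall argument and then, if uniqueness in law is desired for the martingale solution produced above, to invoke the Yamada--Watanabe principle. The key structural simplification is that the diffusion matrix $\tilde\sigma$ is constant and the same Brownian motion drives any two solutions built on a common stochastic basis, so the stochastic integrals cancel in the difference of the two integral equations. Concretely, suppose $\textbf{x}_t$ and $\textbf{y}_t$ both solve \eqref{approxi_formsde} (equivalently the system \eqref{micro_eqn1}--\eqref{micro_eqn2}) on the same filtered probability space, with the same Brownian motion and the same initial datum $\textbf{x}_0$. Subtracting the integral forms gives
\[
\textbf{x}_t - \textbf{y}_t = \int_0^t \bigl( b(\textbf{x}_s,s) - b(\textbf{y}_s,s) \bigr)\, ds ,
\]
the martingale part having disappeared entirely.

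The next step is the a priori estimate. Taking $|\cdot|$, raising to a power $p\geq 2$ (or simply $p=2$), applying H\"older's inequality to the time integral, and then using the uniform Lipschitz bound on $b$ — this is precisely assumption $(\textrm{A}_5)$, which in the present setting follows from $(\textrm{A}_1)$--$(\textrm{A}_4)$, the regularity $s\in C([0,T];C^1(\Omega))$ of the smoke, the mollification of $H$, and the $\tilde\delta$-averaged definition \eqref{pressure} of the discomfort — one obtains
\[
E\Bigl[ \sup_{s\le t} |\textbf{x}_s - \textbf{y}_s|^p \Bigr] \;\le\; C_T \int_0^t E\Bigl[ \sup_{r\le s} |\textbf{x}_r - \textbf{y}_r|^p \Bigr]\, ds
\]
for a constant $C_T$ depending only on $T$, the Lipschitz constant $C$ of $b$, and $p$. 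Since the left-hand side is finite (by the equiboundedness $\|b\|_\infty \le C$, exactly as in the proof of Lemma~\ref{tightness}), Grönwall's inequality forces $E\bigl[\sup_{s\le T}|\textbf{x}_s-\textbf{y}_s|^p\bigr]=0$, hence $\textbf{x}_t = \textbf{y}_t$ for all $t\in[0,T]$, $P$-a.s. This is pathwise uniqueness.

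Finally, to transfer this to the martingale/weak solution constructed in the theorem, one applies the Yamada--Watanabe theorem: pathwise uniqueness together with the existence of a weak (martingale) solution yields uniqueness in law and, in fact, the existence of a strong solution; the hypotheses are met because all entries of $b$ were shown to be globally Lipschitz and equibounded while $\tilde\sigma$ is constant. I expect the only genuinely delicate point to be the global Lipschitz continuity of the drift near the singular sets where $\|\nabla\phi\|$ or $\|\textbf{x}_{c_j}-\textbf{x}_{b_k}\|$ vanishes, and the treatment of the measure-valued term $p(\textbf{x},t)$; but these are exactly what $(\textrm{A}_5)$ encapsulates and are taken as given under the standing assumptions, so the remainder of the argument is the routine Grönwall estimate displayed above.
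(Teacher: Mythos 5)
Your argument is essentially the paper's own proof: because $\tilde\sigma$ is constant, the stochastic integrals cancel in the difference of two solutions, the Lipschitz and equiboundedness properties of the drift (assumption $(\textrm{A}_5)$, verified in the text from $(\textrm{A}_1)$--$(\textrm{A}_4)$, the mollification of $H$, and the regularity $s\in C([0,T];C^1(\Omega))$) yield an integral inequality for the expected difference, and Gr\"onwall's lemma forces the two paths to coincide almost surely. Your only substantive addition is the closing appeal to the Yamada--Watanabe principle to convert pathwise uniqueness into uniqueness in law for the martingale solution produced by the compactness argument --- a point the paper leaves implicit but which legitimately completes the logic, since the existence theorem only constructs a weak solution on a possibly different probability space.
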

\begin{proof}
	Assume that we have two distinct solutions $\textbf{x}_1$ and $\textbf{x}_2$ belonging to $C([0,T];\mathbb{R}^d)$ with continuous sample paths almost surely.  Then it also holds
	\begin{align}
	\textbf{x}_1(t) - \textbf{x}_2(t) = \int_{0}^{t}(b(\textbf{x}_1,y) - b(\textbf{x}_2,y))dy,\nonumber
	\end{align}
	and hence, 
	\begin{align}\label{2solminus}
	E(|\textbf{x}_1(t) - \textbf{x}_2(t)|) \leq E\left(\left|\int_{0}^{t}b(\textbf{x}_1(y),y) - b(\textbf{x}_2(y),y)dy\right|\right).
	\end{align}
	For a detailed check, we  consider 
	\begin{align}
	E\begin{pmatrix}\label{expect_1}
	|\int_{0}^{t}F_1(\textbf{x}_1(y),y) - F_1(\textbf{x}_2(y),y)dy|\\
	|\int_{0}^{t}F_2(\textbf{x}_1(y),y) - F_2(\textbf{x}_2(y),y)dy|
	\end{pmatrix}.
	\end{align}
	Since the terms of $F_1$ is Lipschitz, the first line of \eqref{expect_1} reads
	\begin{align}\label{1st_line}
	&\left|\int_{0}^{t}F_1(\textbf{x}_1(y),y) - F_1(\textbf{x}_2(y),y)dy\right| \nonumber\\&=\Bigg|\int_{0}^{t}\Bigg(-\tilde{v}_{s}({\textbf x}_{a_i}^1(y),y)\left(\frac{\nabla \phi({\textbf x}_{a_i}^1(y))}{\|\nabla \phi({\textbf x}_{a_i}^1(y))\|}\right)(p_{\max} - p({\textbf x}_{a_i}^1(y),y)) \nonumber\\&+ \tilde{v}_{s}({\textbf x}_{a_i}^2(y),y)\left(\frac{\nabla \phi({\textbf x}_{a_i}^2(y))}{\|\nabla \phi({\textbf x}_{a_i}^2(y))\|}\right)(p_{\max} - p({\textbf x}_{a_i}^2(y),y))\Bigg)dy\Bigg|\nonumber\\
	&\leq C\int_{0}^{t}|\textbf{x}_{a_i}^2(y) - \textbf{x}_{a_i}^1(y)|dy.
	\end{align}
	By the same argument, the second line of \eqref{expect_1} becomes
	\begin{align}\label{2nd_line}
	&\left|\int_{0}^{t}F_2(\textbf{x}_1(y),y) - F_2(\textbf{x}_2(y),y)dy\right|=\nonumber\\
	&\Bigg|\int_{0}^{t}\Bigg(\sum_{j = 1}^N\frac{({\textbf x}_{c_j}^1 - {\textbf x}_{b_k}^1)}{\|{\textbf x}_{c_j}^1 - {\textbf x}_{b_k}^1\|}w(\hat{\delta},s({\textbf x}_{b_k}^1,y)) - \nabla H_\epsilon({\textbf x}_{b_k}^1,y) \nonumber\\&- \sum_{j = 1}^N\frac{({\textbf x}_{c_j}^2 - {\textbf x}_{b_k}^2)}{\|{\textbf x}_{c_j}^2 - {\textbf x}_{b_k}^2\|}w(\hat{\delta},s({\textbf x}_{b_k}^2,y)) + \nabla H_\epsilon({\textbf x}_{b_k}^2,y)\Bigg)dy\Bigg|= \nonumber\\
	\nonumber\\
	&\Bigg|\int_{0}^t\sum_{j=1}^{N_A}\frac{\textbf{x}_{a_j}^1 - \textbf{x}_{b_k}^1}{\|\textbf{x}_{a_j}^1 - \textbf{x}_{b_k}^1\|}w(\hat{\delta},s(\textbf{x}_{b_k}^1,y))-\sum_{j=1}^{N_A}\frac{\textbf{x}_{a_j}^2 - \textbf{x}_{b_k}^2}{\|\textbf{x}_{a_j}^2 - \textbf{x}_{b_k}^2\|}w(\hat{\delta},s(\textbf{x}_{b_k}^2,y))\nonumber\\
	&+\sum_{j=1}^{N_B}\frac{\textbf{x}_{b_j}^1 - \textbf{x}_{b_k}^1}{\|\textbf{x}_{b_j}^1 - \textbf{x}_{b_k}^1\|}w(\hat{\delta},s(\textbf{x}_{b_k}^1,y))-\sum_{j=1}^{N_B}\frac{\textbf{x}_{b_j}^2 - \textbf{x}_{b_k}^2}{\|\textbf{x}_{b_j}^2 - \textbf{x}_{b_k}^2\|}w(\hat{\delta},s(\textbf{x}_{b_k}^2,y)) \nonumber\\
	&\nabla H_\epsilon({\textbf x}_{b_k}^2,y) - \nabla H_\epsilon({\textbf x}_{b_k}^1,y)dy
	\Bigg|\nonumber\\
	&= \left|\int_{0}^{t}(A_1 + A_2 + A_3)dy\right|,
	\end{align}
	where
	\begin{align*}
	A_1&:= \sum_{j=1}^{N_A}\frac{\textbf{x}_{a_j}^1 - \textbf{x}_{b_k}^1}{\|\textbf{x}_{a_j}^1 - \textbf{x}_{b_k}^1\|}w(\hat{\delta},s(\textbf{x}_{b_k}^1,y))-\sum_{j=1}^{N_A}\frac{\textbf{x}_{a_j}^2 - \textbf{x}_{b_k}^2}{\|\textbf{x}_{a_j}^2 - \textbf{x}_{b_k}^2\|}w(\hat{\delta},s(\textbf{x}_{b_k}^2,y)),\\
	A_2&:= \sum_{j=1}^{N_B}\frac{\textbf{x}_{b_j}^1 - \textbf{x}_{b_k}^1}{\|\textbf{x}_{b_j}^1 - \textbf{x}_{b_k}^1\|}w(\hat{\delta},s(\textbf{x}_{b_k}^1,y))-\sum_{j=1}^{N_B}\frac{\textbf{x}_{b_j}^2 - \textbf{x}_{b_k}^2}{\|\textbf{x}_{b_j}^2 - \textbf{x}_{b_k}^2\|}w(\hat{\delta},s(\textbf{x}_{b_k}^2,y)),\\
	A_3&:= \nabla H_\epsilon({\textbf x}_{b_k}^2,y) - \nabla H_\epsilon({\textbf x}_{b_k}^1,y).
	\end{align*}
	By the Lipschitz property of the weight factors, the term $\left|\int_{0}^{t}A_1dy\right|$ reads
	\begin{align}\label{A_1}
	\left|\int_{0}^{t}A_1dy\right| &= \Bigg|\int_{0}^t \sum_{j=1}^{N_A}\frac{\textbf{x}_{a_j}^1 - \textbf{x}_{b_k}^1}{\|\textbf{x}_{a_j}^1 - \textbf{x}_{b_k}^1\|}w(\hat{\delta},s(\textbf{x}_{b_k}^1,y))-\sum_{j=1}^{N_A}\frac{\textbf{x}_{a_j}^1 - \textbf{x}_{b_k}^1}{\|\textbf{x}_{a_j}^1 - \textbf{x}_{b_k}^1\|}w(\hat{\delta},s(\textbf{x}_{b_k}^2,y))\nonumber\\ &+ \sum_{j=1}^{N_A}\frac{\textbf{x}_{a_j}^1 - \textbf{x}_{b_k}^1}{\|\textbf{x}_{a_j}^1 - \textbf{x}_{b_k}^1\|}w(\hat{\delta},s(\textbf{x}_{b_k}^2,y))-\sum_{j=1}^{N_A}\frac{\textbf{x}_{a_j}^2 - \textbf{x}_{b_k}^2}{\|\textbf{x}_{a_j}^2 - \textbf{x}_{b_k}^2\|}w(\hat{\delta},s(\textbf{x}_{b_k}^2,y))dy\Bigg|\nonumber\\
	&=\Bigg|\int_{0}^t\sum_{j=1}^{N_A}\frac{\textbf{x}_{a_j}^1 - \textbf{x}_{b_k}^1}{\|\textbf{x}_{a_j}^1 - \textbf{x}_{b_k}^1\|}\left(w(\hat{\delta},s(\textbf{x}_{b_k}^1,y)) - w(\hat{\delta},s(\textbf{x}_{b_k}^2,y))\right) \nonumber\\
	&+\sum_{j=1}^{N_A}\left(\frac{\textbf{x}_{a_j}^1 - \textbf{x}_{b_k}^1}{\|\textbf{x}_{a_j}^1 - \textbf{x}_{b_k}^1\|} -\frac{\textbf{x}_{a_j}^2 - \textbf{x}_{b_k}^2}{\|\textbf{x}_{a_j}^2 - \textbf{x}_{b_k}^2\|} \right)w(\hat{\delta},s(\textbf{x}_{b_k}^2,y))dy \Bigg|\nonumber\\
	&\leq C_1\int_{0}^{t}\left|\textbf{x}_{b_k}^1(y) - \textbf{x}_{b_k}^2(y)\right|dy + C_2\int_{0}^{t}\Bigg(\left|\textbf{x}_{a_j}^1(y) - \textbf{x}_{a_j}^2(y)\right| \nonumber\\&+ \left|\textbf{x}_{b_k}^1(y) - \textbf{x}_{b_k}^2(y) \right|\Bigg)dy.
	\end{align}
	In \eqref{A_1}, $C_1, C_2$ are constants with $C_2$ defined as
	\begin{align}
	C_2 := \max\left\{\frac{1}{\|\textbf{x}_{a_j}^1 - \textbf{x}_{b_k}^1\|},\frac{1}{\|\textbf{x}_{a_j}^2 - \textbf{x}_{b_k}^2\|}\right\}. \nonumber 
	\end{align}
	Thus, \eqref{A_1} can be written as
	\begin{align}\label{ineA1}
	\left|\int_{0}^{t}A_1dy\right|\leq C\int_{0}^t\left|\textbf{x}_{a_i}^1(y) - \textbf{x}_{a_i}^2(y)\right| + \left|\textbf{x}_{b_k}^1(y) - \textbf{x}_{b_k}^2(y) \right|dy.
	\end{align}
	By the same argument, we consider $\left|\int_{0}^{t}A_2dy\right|$, then we obtain
	\begin{align}\label{ineA2}
	\left|\int_{0}^{t}A_2dy\right| &\leq C\int_{0}^{t}\left|\textbf{x}_{b_j}^1(y) - \textbf{x}_{b_j}^2(y)\right| + \left|\textbf{x}_{b_k}^1(y) - \textbf{x}_{b_k}^2(y)\right|dy\nonumber\\
	&\leq C\int_{0}^{t}\left|\textbf{x}_{b_k}^1(y) - \textbf{x}_{b_k}^2(y)\right|dy.
	\end{align}
	Now, using the Lipschitz property of $\nabla H_\varepsilon$, we estimate the last term $\left|\int_{0}^{t}A_3dy\right|$ by 
	\begin{align}\label{ineA3}
	\left|\int_{0}^{t}A_3dy\right|\leq C\int_{0}^{t}\left|\textbf{x}_{b_k}^1(y) - \textbf{x}_{b_k}^2(y)\right|dy.
	\end{align}
	Combining \eqref{ineA1}, \eqref{ineA2} and \eqref{ineA3}, gives
	\begin{align}
	\left|\int_{0}^{t}(A_1 + A_2 + A_3)dy\right| \leq C\int_{0}^t\left|\textbf{x}_{a_i}^1(y) - \textbf{x}_{a_i}^2(y)\right| + \left|\textbf{x}_{b_k}^1(y) - \textbf{x}_{b_k}^2(y) \right|dy.\nonumber
	\end{align}
	Therefore, from \eqref{1st_line}, \eqref{2nd_line}, together with taking expectation, we obtain 
	\begin{align}
	&E\begin{pmatrix}
	|\int_{0}^{t}F_1(\textbf{x}_1(y),y) - F_1(\textbf{x}_2(y),y)dy|\\ |\int_{0}^{t}F_2(\textbf{x}_1(y),y) - F_2(\textbf{x}_2(y),y)dy|
	\end{pmatrix} \nonumber\\&\leq C \begin{pmatrix}\int_{0}^t|\textbf{x}_{a_i}^1(y) - \textbf{x}_{a_i}^2(y)|dy\\
	\int_{0}^t\left|\textbf{x}_{a_i}^1(y) - \textbf{x}_{a_i}^2(y)\right| + \left|\textbf{x}_{b_k}^1(y) - \textbf{x}_{b_k}^2(y) \right|dy
	\end{pmatrix}\nonumber
	\end{align}
	From \eqref{2solminus}, we get the following estimate
	\begin{align}
	\begin{pmatrix}
	E\left(\left|\textbf{x}_{a_i}^1 - \textbf{x}_{a_i}^2\right|\right)\\
	E\left(\left|\textbf{x}_{b_k}^1 - \textbf{x}_{b_k}^2\right|\right)
	\end{pmatrix} \leq C \begin{pmatrix}\int_{0}^t|\textbf{x}_{a_i}^1(y) - \textbf{x}_{a_i}^2(y)|dy\\
	\int_{0}^t\left|\textbf{x}_{a_i}^1(y) - \textbf{x}_{a_i}^2(y)\right| + \left|\textbf{x}_{b_k}^1(y) - \textbf{x}_{b_k}^2(y) \right|dy
	\end{pmatrix}\nonumber
	\end{align}
	Thanks to the Gr\"{o}nwall lemma, we obtain
	\begin{align}
	\begin{pmatrix}
	E\left(\left|\textbf{x}_{a_i}^1 - \textbf{x}_{a_i}^2\right|\right)\\
	E\left(\left|\textbf{x}_{b_k}^1 - \textbf{x}_{b_k}^2\right|\right)
	\end{pmatrix} = \begin{pmatrix}
	0\\0
	\end{pmatrix}\nonumber
	\end{align}
	This implies that $\textbf{x}_1(t) = \textbf{x}_2(t)$ almost surely that 
	\begin{align}
	P\left(\sup_{t \in [0,T]}\left|\textbf{x}_1(t) - \textbf{x}_2(t) \right|=0\right)=1.\nonumber
	\end{align}
	\qed
\end{proof}

\subsection{Background results}

This section contains a few remarks about the regularity of the agents's paths as well as of the concentration of smoke.  These results are fairly  standard; we add them here for the sake of completeness of our arguments.  

\subsubsection{A regularized Eikonal equation}\label{eiko}

In this section, we regularize the Eikonal equation introduced for Model 1; see  \eqref{eq:eikonal}. This is often referred to as a \textquotedblleft viscous\textquotedblright \ Eikonal equation. 

For $\varepsilon>0$, we introduce the following semilinear viscous problem  approximating as $\varepsilon\to 0$ our Eikonal equation: 

Find $\phi_\varepsilon\in C(\overline{\Omega})\cap C^2(\Omega)$ satisfying
\begin{align}
\begin{cases}
-\varepsilon\Delta \phi_\varepsilon + |\nabla\phi_\varepsilon|^2 = f^2 \quad &\text{ in } \Omega,\\
\phi_\varepsilon(x) = 0 \quad &\text{ at } \partial \Omega \cup \partial G,\\
\nabla\phi_\varepsilon\cdot \textbf{n} = g \quad &\text{ at } E,
\end{cases}
\end{align}
With suitable assumptions on $f,g,\Omega$, this problem with mixed Dirichlet-Neumann boundary conditions can be shown to be well-posed; see e.g. Theorem 2.1 , p.10, in \cite{Schieborn:2006} for the case of the Dirichlet problem. 
Note also that it is sometimes convenient to transform this semilinear PDE via
\begin{align}
w_a := \exp(-\varepsilon^{-1}\phi_\varepsilon) - 1,
\end{align}
where $a = \frac{1}{\varepsilon}$.
Then $w_a$ becomes a solution of the following linear PDE with mixed Dirichlet-Robin boundary conditions:
\begin{align}
\begin{cases}\label{transform:eqn}
-\Delta w_a + f^2 a^2 w_a + a^2 = 0 \quad &\text{ in } \Omega,\\
w_a = 0 \quad &\text{ at } \partial \Omega \cup \partial G,\\
\nabla w_a\cdot \textbf{n} = \tilde{g}(w_a) \quad &\text{ at } E,
\end{cases}
\end{align}
where $\tilde{g}(w_a)= -\varepsilon^{-1}(w_a + 1)g.$


\subsubsection{Higher regularity estimates for the smoke concentration}

We introduce the evolution of fire throughout a diffusion-dominated  convection process. The production and spreading of smoke, with the smoke density $s(\textbf{x},t)$, are described as the following diffusion-drift-reaction equation:  
\begin{eqnarray}
\begin{cases}\label{paraboliceqn1}
\partial_t s + \textrm{div} (-D\nabla s + \textbf{v}_ds) = y_{s}H(\textbf{x},t) &\textrm{ in } \Omega \times (0,T],\\
\left(-D\nabla s + \textbf{v}_ds\right)\cdot\textbf{n} = 0 &\textrm{ on } \partial \Omega \cup \partial G\times (0,T], \\
\left(-D\nabla s + \textbf{v}_ds\right)\cdot\textbf{n} = \lambda s &\textrm{ at } \partial E \times (0,T],\\
s(x,0) = s_0 &\textrm{ in } \Omega \times \{t = 0\},
\end{cases}
\end{eqnarray}
where $D$ is the smoke diffusive coefficient, $\textbf{v}_d$ is a given drift corresponding (e.g. wind's velocity,\ldots), $y_{s}$ is a smoke production coefficient, while $H$ represents the shape and intensity of the fire. The center of the fire location is denoted by $\textbf{x}_0$ with radius $r_0$. $H$ reads
\begin{align}\label{H_def}
H(\textbf x,t) = \begin{cases}
R(\textbf x,t) \quad \text{ if } |\textbf x-{\textbf x}_0| <r_0,\\
0 \quad \text{ otherwise },
\end{cases}
\end{align}
where $R(\textbf x,t)$ is defined by
\begin{align}
R(\textbf{x},t) = c(t)\exp\left(-\kappa\frac{|\textbf x-{\textbf x}_0|}{L}\right).\nonumber
\end{align}
Here, $\kappa$ is the convection heat transfer constant coefficient, $c(t)$ is a constant function depending on $t$, $L$ is the typical length of a stationary temperature distribution within the geometry and $\lambda$ is an interface exchange smoke coefficient. For convenience, in order to take the gradient of $H$, we consider $H_\epsilon$ a suitable mollification of $H$. 
In our case, from now on, we consider the coefficient $y_s$ as a constant $c_y$ and put $f(x,t) := c_yH_\epsilon(x,t)$, then \eqref{paraboliceqn1} becomes
\begin{align}
\begin{cases}\label{paraboliceqn}
\partial_t s + \text{div} (-D\nabla s + \textbf{v}_ds) = f(\textbf{x},t) &\text{ in } \Omega \times (0,T],\\
\left(-D\nabla s + \textbf{v}_ds\right)\cdot\textbf{n} = 0 &\text{ on } \partial \Omega \cup \partial G\times (0,T], \\
\left(-D\nabla s + \textbf{v}_ds\right)\cdot\textbf{n} = \lambda s &\text{ at } \partial E \times (0,T],\\
s(\textbf{x},0) = s_0 &\text{ in } \Omega \times \{t = 0\},
\end{cases}
\end{align}
In order to have a well-posed dynamics of pedestrians model, we need the solution of \eqref{paraboliceqn} to belong to $C([0,T];C^1(\Omega))$. Since the pedestrian dynamics system couple one way with the smoke equation, the solution $s$ of \eqref{paraboliceqn} should be Lipschitz to guarantee the well-posedness of the system. In the next part, we adapt the approach in \cite{Pankavich15} to get a short proof of increased parabolic regularity for a bounded domain $\Omega$ in $\mathbb{R}^d$. Moreover, from now on, we assume the boundaries $\partial \Omega \cup \partial G$ and $\partial E$ are $C^2$ (or, at least, they satisfy the exterior sphere condition).
\begin{theorem}\label{lower_regularity}[Lower-order regularity]
	Assume Assumptions $(\text{A}_3)$, $(\text{A}_4)$ to hold. Suppose $f\in H^1(\Omega)$ and $\textbf{v}_d \in W^{1,\infty}(\Omega)$. Then, for any $T>0$, $t \in (0,T]$, there exists a unique 
	\begin{align}
	s \in C([0,T];H^1(\Omega)) \text{ and } s' \in L^2(0,T;H^{-1}(\Omega))\nonumber
	\end{align}
	that solves \eqref{paraboliceqn}. Furthermore, the following a priori estimates hold
	\begin{align}
	\sup_{t \in [0,T]}\|s\|_{L^2(\Omega)}^2 \leq C_T\left(\|s_0\|_{L^2(\Omega)}^2 + \|f\|_{L^2(\Omega)}^2\right) \text{ and } \|\nabla s\|_{L^2(\Omega)}^2 \nonumber\\\leq \frac{C_T}{t}\left(\|s_0\|_{L^2(\Omega)}^2+\|f\|_{H^1(\Omega)}^2\right).\nonumber
	\end{align}
\end{theorem}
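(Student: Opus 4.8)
The plan is to put \eqref{paraboliceqn} into variational form and treat it by a Faedo--Galerkin argument, extracting the two a priori bounds at the level of the approximations and passing to the limit. First I would multiply \eqref{paraboliceqn} by a test function $v\in H^1(\Omega)$, integrate over $\Omega$ and use the boundary conditions to reach the weak problem: find $s$ with $s(t)\in H^1(\Omega)$, $s'(t)\in H^{-1}(\Omega)$, $s(0)=s_0$ and
\begin{align*}
\langle s'(t),v\rangle_{H^{-1},H^1} + a\big(s(t),v\big) = (f,v)_{L^2(\Omega)}\qquad\text{for all }v\in H^1(\Omega),
\end{align*}
where $a(u,v):=\int_\Omega D\nabla u\cdot\nabla v\,dx - \int_\Omega (\textbf{v}_d\,u)\cdot\nabla v\,dx + \int_{\partial E}\lambda\,u\,v\,dS$. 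Boundedness of $a$ on $H^1(\Omega)\times H^1(\Omega)$ follows from $\textbf{v}_d\in L^\infty(\Omega)$, $\lambda\in W^{m,\infty}(\partial\Omega)\subset L^\infty(\partial\Omega)$ and the trace theorem; and, using the uniform ellipticity $(\text{A}_3)$, the lower bound $\lambda\ge-\underline\gamma$ from $(\text{A}_4)$, Young's inequality and the interpolation trace inequality $\|v\|_{L^2(\partial\Omega)}^2\le\varepsilon\|\nabla v\|_{L^2(\Omega)}^2+C_\varepsilon\|v\|_{L^2(\Omega)}^2$, one obtains a G\aa rding inequality $a(v,v)+\lambda_0\|v\|_{L^2(\Omega)}^2\ge\beta\|v\|_{H^1(\Omega)}^2$. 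With these two properties, the standard Galerkin scheme on a Hilbert basis of $H^1(\Omega)$ (finite-dimensional approximants $s_m$ solving a linear ODE system, uniform a priori bounds, weak compactness, identification of the limit) delivers a weak solution in $L^2(0,T;H^1(\Omega))\cap C([0,T];L^2(\Omega))$ with $s'\in L^2(0,T;H^{-1}(\Omega))$; see \cite{Evans2013}.

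Next I would establish the two estimates so that they survive the limit. Testing with $v=s_m$ and using Young's and the trace inequality to absorb the advective and Robin contributions into $\underline\theta\|\nabla s_m\|_{L^2}^2$ gives
\begin{align*}
\tfrac{d}{dt}\|s_m\|_{L^2(\Omega)}^2 + \underline\theta\|\nabla s_m\|_{L^2(\Omega)}^2 \le C\|s_m\|_{L^2(\Omega)}^2 + C\|f\|_{L^2(\Omega)}^2,
\end{align*}
whence Gr\"onwall's lemma yields $\sup_{[0,T]}\|s_m\|_{L^2}^2+\int_0^T\|\nabla s_m\|_{L^2}^2\,dt\le C_T\big(\|s_0\|_{L^2}^2+\|f\|_{L^2}^2\big)$, the first claimed bound. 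For the smoothing bound I would test with $v=s_m'(t)$; since $D$ and $\lambda$ do not depend on $t$, the diffusive and boundary terms combine into $\tfrac12\tfrac{d}{dt}a_0(s_m,s_m)$ with $a_0$ the symmetric part of $a$, the drift term is controlled with $\textbf{v}_d\in W^{1,\infty}$ and Young, and $(f,s_m')$ by Young with the $\|s_m'\|_{L^2}^2$ part absorbed. Multiplying the resulting identity by $t$, using $\tfrac{d}{dt}\big(t\,a_0(s_m,s_m)\big)=a_0(s_m,s_m)+t\,\tfrac{d}{dt}a_0(s_m,s_m)$, and integrating over $(0,t)$ --- the term $\int_0^t a_0(s_m,s_m)\,d\tau$ being controlled by the first estimate, the term $\int_0^t\tau(f,s_m')\,d\tau$ by Young --- one arrives at $t\,\|\nabla s_m(t)\|_{L^2}^2\le C_T\big(\|s_0\|_{L^2}^2+\|f\|_{L^2}^2\big)$, which a fortiori gives the stated bound with $\|f\|_{H^1}$ on the right; alternatively, inserting $s'(t)$ and $\nabla s(t)\in L^2$ for $t>0$ into $-\operatorname{div}(D\nabla s)=f-s'-\operatorname{div}(\textbf{v}_d s)$ and invoking elliptic regularity on the $C^2$ domain produces the $H^1$-norm of $f$ directly while also yielding an $H^2$-bound. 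Uniqueness is immediate: the difference $w$ of two solutions solves the linear problem with $w(0)=0$, and testing with $w$ and applying Gr\"onwall forces $w\equiv0$.

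The routine parts are the Galerkin construction and the compactness limit. The main obstacle is the careful handling of the two lower-order terms in the energy identities: the advection term $\int_\Omega(\textbf{v}_d s)\cdot\nabla v\,dx$ and, more delicately, the Robin term $\int_{\partial E}\lambda\,sv\,dS$ whose coefficient is only bounded \emph{below} by $-\underline\gamma$ under $(\text{A}_4)$; both must be dominated via the trace inequality and absorbed into the coercive term $\underline\theta\|\nabla v\|_{L^2}^2$, which is precisely what degrades genuine coercivity of $a$ to the G\aa rding inequality and forces the exponential-in-$T$ constants $C_T$. The second delicate point is the sharp time weight $1/t$ in the smoothing estimate, i.e. allowing merely $s_0\in L^2(\Omega)$ rather than $s_0\in H^1(\Omega)$; getting it cleanly is where the short argument of \cite{Pankavich15} is adapted, and one has to introduce the weight $t$ \emph{before} the time integration so that the boundary contribution at $\tau=0$ drops out.
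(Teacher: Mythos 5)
Your proposal is correct in substance and matches the paper for the setup (Galerkin scheme on an $H^1(\Omega)$ basis, testing with $s_m$ for the $L^2$ bound, Aubin--Lions for the passage to the limit, Gr\"onwall for uniqueness), but it takes a genuinely different route to the key $1/t$-smoothing estimate. The paper does \emph{not} test with $t\,s_m'$; instead it differentiates the equation in space, tests the differentiated equation with $\partial_x s_m$, and feeds the two resulting differential inequalities into the linearly weighted functional $\zeta_1(t)=\|s_m\|_{L^2(\Omega)}^2+\tfrac{C(\underline{\theta},\underline{\gamma})t}{2}\|\nabla s_m\|_{L^2(\Omega)}^2$ before applying Gr\"onwall. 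This is why the paper needs $f\in H^1(\Omega)$ (to control $\partial_x f$) and why the same technique iterates to the higher-order Theorem \ref{Higer_order} by induction on the number of spatial derivatives; your $t\,s_m'$ argument, by contrast, yields the gradient bound with only $\|f\|_{L^2(\Omega)}$ on the right, which is stronger here but does not bootstrap as directly to $H^m$. Two points in your version deserve more care. First, testing with $s_m'$ produces the term $\int_\Omega(\textbf{v}_d s_m)\cdot\nabla s_m'\,dx$, in which $\nabla s_m'$ is not controlled; you must integrate by parts (legitimate since $\textbf{v}_d\in W^{1,\infty}$), which reintroduces a boundary term $\int_{\partial E}(\textbf{v}_d\cdot\textbf{n})\,s_m s_m'\,d\sigma$ that has to be rewritten as a time derivative of a trace quantity and absorbed --- this is the one step where your sketch is not yet a proof. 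Second, your use of the interpolated trace inequality $\|v\|_{L^2(\partial E)}^2\le\varepsilon\|\nabla v\|_{L^2}^2+C_\varepsilon\|v\|_{L^2}^2$ to reach a G\aa rding inequality is actually cleaner than the paper, which uses the crude trace bound and consequently must impose the implicit smallness condition $C(\underline{\gamma})<\underline{\theta}/2$ relating the Robin coefficient to the ellipticity constant. Finally, note that the theorem asserts $s\in C([0,T];H^1(\Omega))$, not merely a pointwise-in-$t$ $H^1$ bound; the paper closes this with a translation argument ($w_r(t)=s(t+r)-s(t)$ solves the homogeneous problem), which your proposal omits.
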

\begin{proof} We adapt the arguments from \cite{Pankavich15} to our setting and split the proof into fourth steps:
	\begin{itemize}
		\item Step 1: {\it Galerkin approximation}
		
		Firstly, we assume that the functions $w_k = w_k(x) (k \in \mathbb{N})$ are smooth and that
		\begin{align}
		\{w_k\}_{k=1}^{\infty} \text{ is an orthonormal basis of } H^1(\Omega).
		\end{align}
		We are looking for an approximation of \eqref{paraboliceqn} in the form
		\begin{align}\label{approxima_galerkin1}
		s_m(t) := \sum_{k=1}^{m}d_n^k(t)w_k,
		\end{align}
		where the coefficients $d_m^k$ satisfy the following system
		\begin{align}\label{weakformgalerkin1}
		\begin{cases}
		\langle s'_m, w_k\rangle_{L^2(\Omega)} + \langle D\nabla s_m,\nabla w_k\rangle_{L^2(\Omega)} - \langle\textbf{v}_ds_m,\nabla w_k\rangle_{L^2(\Omega)} \\+ \langle \lambda s_m,w_k\rangle_{L^2(\partial E)} = \langle f,w_k \rangle_{L^2(\Omega)}, \\
		s_m(0) = s_{0m} \text{ with } k = 1\ldots m,
		\end{cases}
		\end{align}
		where 
		\begin{align}\label{s0approximateform1}
		s_{0m} = \sum_{k = 1}^{m}c_{m}^{k}w_k \to s_0
		\end{align} 
		strongly in $L^2(\Omega)$.
		\item Step 2: {\it A priori estimates}
		
		The goal of this step is to obtain some useful a priori estimates. Multiplying \eqref{weakformgalerkin1} by $d_m^k(t)$, taking the summation for $k\in \{1,\ldots,m\}$. Then recalling \eqref{approxima_galerkin1}, using Green's formula together with the mixed boundary condition, we arrive at
		\begin{align}\label{weakform_test1}
		\frac{1}{2}\frac{d}{dt}\|s_m(t)\|_{L^2(\Omega)}^2 + \int_{\Omega}\nabla s_m\cdot D\nabla s_mdx +  \int_{\partial E}\lambda s_m^2d\sigma(E) \nonumber\\= \int_{\Omega}s_m\textbf{v}_d\cdot\nabla s_mdx + \int_{\Omega}fs_mdx.
		\end{align}
		Thanks to Cauchy-Schwarz's inequality $\varepsilon$ for an $\varepsilon >0$, we have
		\begin{align}
		\frac{1}{2}\frac{d}{dt}\|s_m(t)\|_{L^2(\Omega)}^2 + \int_{\Omega}\nabla s_m\cdot D\nabla s_mdx + \int_{\partial E}\lambda s_m^2d\sigma(E) \leq  + \|\textbf{v}_d\|_{1,\infty}\Big(\varepsilon\|s_m\|_{L^2(\Omega)}^2 \nonumber\\+\frac{1}{\varepsilon}\|\nabla s_m\|_{L^2(\Omega)}^2 \Big) + \frac{1}{2}\left(\|f\|_{L^2(\Omega)}^2 + \|s_m\|_{L^2(\Omega)}^2\right).\nonumber
		\end{align}
		Next, by using the ellipticity property of the diffusion coefficient $D$ and the assumption on the interface exchange coefficient, we obtain
		\begin{align}\label{prior_1}
		\frac{1}{2}\frac{d}{dt}\|s_m(t)\|_{L^2(\Omega)}^2 \leq -\underline{\theta}\|\nabla s_m\|_{L^2(\Omega)}^2 + \underline{\gamma}\|s_m\|_{L^2(\partial E)}^2 + \|\textbf{v}_d\|_{1,\infty}\Bigg(\frac{1}{\varepsilon}\|s_m\|_{L^2(\Omega)}^2 \nonumber\\+\varepsilon\|\nabla s_m\|_{L^2(\Omega)}^2 \Bigg) + \frac{1}{2}\left(\|f\|_{L^2(\Omega)}^2 + \|s_m\|_{L^2(\Omega)}^2\right).
		\end{align}
		By the trace inequality applied to $\|s_m\|_{L^2(\partial E)}^2$, \eqref{prior_1} reads
		\begin{align}
		\frac{1}{2}\frac{d}{dt}\|s_m(t)\|_{L^2(\Omega)}^2 \leq -\underline{\theta}\|\nabla s_m\|_{L^2(\Omega)}^2 + C(\underline{\gamma})\|s_m\|_{H^1(\Omega)}^2 + \|\textbf{v}_d\|_{1,\infty}\Bigg(\frac{1}{\varepsilon}\|s_m\|_{L^2(\Omega)}^2 \nonumber\\+\varepsilon\|\nabla s_m\|_{L^2(\Omega)}^2 \Bigg) + \frac{1}{2}\left(\|f\|_{L^2(\Omega)}^2 + \|s_m\|_{L^2(\Omega)}^2\right).\nonumber
		\end{align}
		By choosing $\varepsilon = \underline{\theta}(2\|\textbf{v}_d\|_{1,\infty})^{-1}$, we get the following estimate
		\begin{align}\label{priori_1}
		\frac{1}{2}\frac{d}{dt}\|s_m(t)\|_{L^2(\Omega)}^2 \leq C\left(\|f\|_{L^2(\Omega)}^2 + \|s_m\|_{H^1(\Omega)}^2\right) + \left(C(\underline{\gamma}) - \frac{\underline{\theta}}{2}\right)\|\nabla s_m\|_{L^2(\Omega)}^2.
		\end{align}
	Multiplying with $\varphi = \partial_x s_m$ \eqref{paraboliceqn} differentiated with respect to $x$, we obtain after integrating by part that
		\begin{align}
		\frac{1}{2}\frac{d}{dt}\|\partial_x s_m\|_{L^2(\Omega)}^2 + \int_{\Omega}\nabla \partial_x s_m\cdot D \nabla \partial_x s_mdx + \int_{\Omega}\nabla\partial_x s_m \cdot \partial_x D\nabla s_mdx \nonumber\\- \int_{\Omega}\nabla\partial_x s_m\cdot\textbf{v}_d\partial_xs_mdx -\int_{\Omega}\nabla\partial_xs_m\cdot \partial_x \textbf{v}_ds_mdx+ \int_{\partial E}\lambda|\partial_x s_m|^2d\sigma(E)\nonumber\\ + \int_{\partial E}\partial_x(\lambda s_m)\partial_xs_md\sigma (E) = \int_{\Omega}\partial_x f \partial_x s_mdx.\nonumber
		\end{align}
		This leads to 
		\begin{align}\label{priori_test2}
		\frac{1}{2}\frac{d}{dt}\|\partial_x s_m\|_{L^2(\Omega)}^2=-\int_{\Omega}\nabla \partial_x s_m\cdot D \nabla \partial_x s_mdx - \int_{\Omega}\nabla\partial_x s_m \cdot \partial_x D\nabla s_mdx\nonumber\\ + \int_{\Omega}\nabla\partial_x s_m\cdot\textbf{v}_d\partial_xs_mdx +\int_{\Omega}\nabla\partial_xs_n\cdot \partial_x \textbf{v}_ds_mdx - \int_{\partial E}\lambda|\partial_x s_m|^2d\sigma(E)\nonumber\\ - \int_{\partial E}\partial_x(\lambda s_m)\partial_xs_md\sigma(E) + \int_{\Omega}\partial_x f \partial_x s_mdx.
		\end{align}
		Using the assumptions on $D$ and $\lambda$ as well as  Cauchy-Schwarz's inequality for the RHS of \eqref{priori_test2}, we obtain the following estimate
		\begin{align}
		\frac{1}{2}\frac{d}{dt}\|\partial_x s_m\|_{L^2(\Omega)}^2 \leq -\underline{\theta} \|\nabla \partial_x s_m\|_{L^2(\Omega)}^2 + \|D\|_{W^{1,\infty}}\Bigg(\varepsilon_1\|\nabla\partial_xs_m\|_{L^2(\Omega)}^2 \nonumber\\+ \frac{1}{\varepsilon_1}\|\nabla s_m\|_{L^2(\Omega)}^2\Bigg)
		+ |\textbf{v}_d|\left(\varepsilon_{2'}\|\nabla \partial_xs\|_{L^2(\Omega)}^2+\frac{1}{\varepsilon_{2'}}\|\partial_xs\|_{L^2(\Omega)}^2\right) \nonumber\\+ \|\textbf{v}_d\|_{1,\infty}\left(\varepsilon_2\|\nabla \partial_xs\|_{L^2(\Omega)}^2 + \frac{1}{\varepsilon_2}\|s\|_{L^2(\Omega)}^2\right) +\underline{\gamma}\|\partial_xs_m\|_{L^2(\partial E)}^2 \nonumber\\+ \|\lambda\|_{1,\infty}\|\partial_xs_m\|_{L^2(\partial E)}^2 + \frac{1}{2}\left(\|\partial_xf\|_{L^2(\Omega)}^2 + \|\partial_xs_m\|_{L^2(\Omega)}^2\right).\nonumber
		\end{align}
		By choosing $\varepsilon_1= \underline{\theta}(4\|D\|_{1,\infty})^{-1}$, $\varepsilon_{2'}=\underline{\theta}(8C)^{-1}$, $\varepsilon_2 = \underline{\theta}(8\|\textbf{v}_d\|_{1,\infty})^{-1}$ together with the use of the trace inequality to handle the boundary terms, we arrive at
		\begin{align}\label{priori_2}
		\frac{1}{2}\frac{d}{dt}\|\partial_x s_m\|_{L^2(\Omega)}^2 &\leq -\frac{\underline{\theta}}{2}\|\nabla\partial_xs_m\|_{L^2(\Omega)}^2 + C\|\nabla s_m\|_{L^2(\Omega)}^2 + C\|\partial_xs_m\|_{L^2(\Omega)}^2\nonumber\\&+ C\|s_m\|_{L^2(\Omega)}^2  +C(\underline{\gamma})\left(\|\partial_x s_m\|_{L^2(\Omega)}^2 + \|\nabla \partial_x s_m\|_{L^2(\Omega)}^2\right)
		\nonumber\\&+\frac{1}{2}\left(\|\partial_x f\|_{L^2(\Omega)}^2 + \|\partial_x s_m\|_{L^2(\Omega)}^2\right)\nonumber\\
		&\leq \left(-\frac{\underline{\theta}}{2} + C(\underline{\gamma})\right)\|\nabla\partial_xs_m\|_{L^2(\Omega)}^2 \nonumber\\&+ C\left(\|\nabla s_m\|_{L^2(\Omega)}^2 + \|\partial_xs_m\|_{L^2(\Omega)}^2 + \|s_m\|_{L^2(\Omega)}^2\right).
		\end{align}
		Taking the summation over all first order derivatives, we have
		\begin{align}
		\frac{1}{2}\frac{d}{dt}\|\nabla s_m\|_{L^2(\Omega)}^2 \leq \left(C(\underline{\gamma})- \frac{\underline{\theta}}{2}\right)\|\nabla^2s_m\|_{L^2(\Omega)}^2 + C\left(\| s_m\|_{H^1(\Omega)}^2 + \|f\|_{H^1(\Omega)}^2\right).\nonumber
		\end{align}
		Let us introduce a linear expansion in $t$ as follow
		\begin{align}\label{expansion}
		\zeta_1(t) = \|s_m\|_{L^2(\Omega)}^2 + \frac{C(\underline{\theta},\underline{\gamma})t}{2}\|\nabla s_m\|_{L^2(\Omega)}^2.
		\end{align}
		Taking the derivative of \eqref{expansion} with respect to $t$, we obtain
		\begin{align}
		\zeta_1'(t) = \frac{d}{dt}\|s_m\|_{L^2(\Omega)}^2 + \frac{C(\underline{\theta},\underline{\gamma})}{2}\|\nabla s_m\|_{L^2(\Omega)}^2 + \frac{C(\underline{\theta},\underline{\gamma})t}{2}\frac{d}{dt}\|\nabla s_m\|_{L^2(\Omega)}^2.\nonumber
		\end{align}
		Combining \eqref{priori_1} and \eqref{priori_2}, we are led to the following estimate
		\begin{align}
		\zeta'(t) \leq 2C\left(\|f\|_{L^2(\Omega)}^2 + \|s_m\|_{H^1(\Omega)}^2\right) - 2\left(C(\underline{\gamma})- \frac{\underline{\theta}}{2}\right)\|\nabla s_m\|_{L^2(\Omega)}^2 \nonumber\\+ \frac{C(\underline{\theta},\underline{\gamma})}{2}\|\nabla s_m\|_{L^2(\Omega)}^2 + \frac{C(\underline{\theta},\underline{\gamma})t}{2}\Bigg[\left(C(\underline{\gamma})- \frac{\underline{\theta}}{2}\right)\|\nabla^2s_m\|_{L^2(\Omega)}^2 \nonumber\\+ C\left(\| s_m\|_{H^1(\Omega)}^2 + \|f\|_{H^1(\Omega)}^2\right)\Bigg].\nonumber
		\end{align}
		Choosing $\underline{\theta},\underline{\gamma}$ such that
		$- \frac{\underline{\theta}}{2} + C(\underline{\gamma})< 0$ and put $- \frac{\underline{\theta}}{2} + C(\underline{\gamma}) =: -C(\underline{\theta},\underline{\gamma})$, we obtain
		\begin{align}\label{bef_gronwall}
		\zeta'(t) \leq C_T\left(\|f\|_{H^1(\Omega)}^2 + \zeta(t)\right) \textrm{ for a.e. } t \in (0,T).
		\end{align}
		Applying Gr\"{o}nwall's inequality to \eqref{bef_gronwall}, we have the following estimate
		\begin{align}\label{Gronwall}
		\zeta(t) \leq C_{T}\left(\zeta(0) + \|f\|_{H^1(\Omega)}^2\right) = C_T\left(\|s_0\|_{L^2(\Omega)}^2 + \|f\|_{H^1(\Omega)}^2\right).
		\end{align}
		Combining \eqref{expansion} and \eqref{Gronwall} gives
		\begin{align}\label{pri_resl1}
		\|s_m(t)\|_{L^2(\Omega)}^2 \leq C_T\left(\|s_0\|_{L^2(\Omega)}^2 + \|f\|_{H^1(\Omega)}^2\right)
		\end{align}
		and 
		\begin{align}\label{pri_resl2}
		\|\nabla s_m\|_{L^2(\Omega)}^2 \leq \frac{C_T}{Ct}\left(\|s_0\|_{L^2(\Omega)}^2 + \|f\|_{H^1(\Omega)}^2\right).
		\end{align}
		The estimates \eqref{pri_resl1} and \eqref{pri_resl2} imply that $s_m$ is a bounded sequence in $H^1(\Omega)$ and a.e $t \in (0,T)$.
		\item Step 3: { \it Passage to the limit $m \to \infty$}
		
		Using the {\em a priori} estimates \eqref{pri_resl1} and \eqref{pri_resl2}, we obtain the following inequality
		\begin{align}
		\int_{0}^{T}\frac{1}{2}\frac{d}{dt}\|s_m(t)\|_{L^2(\Omega)}^2dt + \int_{0}^{T}\|\nabla s_m\|_{L^2(\Omega)}^2dt \leq C_T\int_{0}^{T}\left(\|f\|_{H^1(\Omega)}^2+\|s_0\|_{L^2(\Omega)}^2\right).\nonumber
		\end{align}
		This implies that $(s_m)$ is a bounded sequence in $L^2(0,T;H^1(\Omega))$.
		
		On the other hand, in order to use Aubin-Lions's lemma, we additionally need to prove $s_m' \in L^2(0,T;H^{-1}(\Omega))$. Take an arbitrary $v \in H^1(\Omega)$, with $\|v\|_{H^1(\Omega)} \leq 1$. We can deduce for a.e. $0 < t < T$ that
		\begin{align}
		\langle s_m', v\rangle_{L^2(\Omega)} = \langle f,v\rangle_{L^2(\Omega)} + \langle \textbf{v}_ds_m,\nabla v\rangle_{L^2(\Omega)} - \langle D\nabla s_m,\nabla v\rangle_{L^2(\Omega)} - \langle\lambda s_m,v\rangle_{L^2(\partial E)}.\nonumber
		\end{align}
		Then, we get
		\begin{align}\label{est_st11}
		|\langle s_m', v\rangle| \leq  C\|s_m\|_{H^1(\Omega)} + C\|f\|_{L^2(\Omega)}.
		\end{align}
		for $\|v\|_{W^{1,2}(\Omega)} \leq 1$. Moreover, \eqref{est_st11} implies that
		\begin{align}\label{est_st12}
		\|s_m'\|_{H^{-1}(\Omega)} \leq C\left(\|s_m\|_{H^1(\Omega)} + \|f\|_{L^2(\Omega)}\right) .
		\end{align}
		Integrating \eqref{est_st12} on $(0,T)$, we obtain the following estimate
		\begin{align}\label{est_st13}
		\int_{0}^{T}\|s_m'\|_{H^{-1}(\Omega)}^2dt &\leq C\int_{0}^{T} \|s_m\|_{H^1(\Omega)}+\|f\|_{L^2(\Omega)}dt 
		\nonumber\\&\leq C\left(\|s_0\|_{L^2(\Omega)}^2 + \|f\|_{L^2(0,T;L^2(\Omega))}\right).
		\end{align}
		Thus, $s_m' \in L^2(0,T;H^{-1}(\Omega))$.
		Therefore, we conclude that
		\begin{align}
		\begin{cases}
		s_m \rightharpoonup s \text{ weakly in } L^2(0,T;H^1(\Omega)),\\
		{s'}_m \rightharpoonup s' \text{ weakly in } L^2(0,T;H^{-1}(\Omega)).
		\end{cases}\nonumber
		\end{align}
		Relying on Aubin-Lions lemma in \cite{Boyer2013} with $p,q = 2$,
		$$E_0 = H^1(\Omega), \quad E = L^2(\Omega), \quad E_1 = H^{-1}(\Omega)$$
		together with Rellich theorem (cf. \cite{Evans1997}, Section 5.7, Theorem 1) for the compactness embedding $H^1(\Omega) \subset L^2(\Omega)$, we have the sequence $\{s_m\}$ is relatively compact in $L^2(0,T;L^2(\Omega))$ in the strong topology. This sequence also weakly relatively compact in $L^2(0,T;H^1(\Omega))$ and weakly star relatively compact in $C([0,T];L^2(\Omega))$. 
		Hence, there exists a subsequence $s_{m_k}$ (just for simplicity of notation, let us 
		denote it by $s_m$) which converges to a function $s$ belonging to $L^2(0,T;H^1(\Omega))$ and $C([0,T];L^2(\Omega))$. Therefore, we can conclude that there exists a solution $s \in L^2(0,T;H^1(\Omega)) \cup C([0,T];L^2(\Omega))$ satisfying equation \eqref{paraboliceqn}.
		\item Step 4: {\it Uniqueness of solutions}
		
		 Assume that equation \eqref{paraboliceqn} admits $2$ solutions $s_1$ and $s_2$ belonging to\\ $L^2(0,T;H^1(\Omega)) \cup C([0,T];L^2(\Omega))$. Denote $w = s_1 - s_2$. Then equation \eqref{paraboliceqn} becomes
		\begin{align}
		\begin{cases}
		\partial_t w + \text{div}(-D\nabla w + \textbf{v}_dw) = 0\quad \text{ in } \Omega \times (0,T],\\
		(-D\nabla w + \textbf{v}_dw)\cdot \textbf{n} = 0\quad \text{ on } \partial \Omega \cup \partial G\times (0,T],\\
		(-D\nabla w + \textbf{v}_dw)\cdot \textbf{n} = \lambda w \quad\text{ at } \partial E \times (0,T],\\
		w(t = 0) = 0 \quad \text{ in } \Omega \times \{t=0\},
		\end{cases}\nonumber
		\end{align}
		Recalling \eqref{weakformgalerkin1}, we note that
		\begin{align}
		\frac{1}{2}\frac{d}{dt}\int_{\Omega}w^2dx + \int_{\Omega}D|\nabla w|^2dx + \int_{\partial E}\lambda w^2d\sigma(E) = \int_{\Omega}w\textbf{v}_d\cdot\nabla wdx ,\nonumber
		\end{align}
		which leads to
		\begin{align}
		\frac{d}{dt}\left(\|w\|_{L^2(\Omega)}^2\right) + \overline{\theta}\|\nabla w\|_{L^2(\Omega)}^2 + \overline{\gamma}\|w\|_{L^2(\partial E)}^2 \leq C\|w\|_{L^2(\Omega)}^2.\nonumber
		\end{align}
		This also implies 
		\begin{align}\label{gronwall_12}
		\frac{d}{dt}\left(\|w\|_{L^2(\Omega)}^2\right) \leq C\|w\|_{L^2(\Omega)}^2.
		\end{align}
		Integrating \eqref{gronwall_12} on $(0,T)$, gives
		\begin{align}
		\|w\|_{L^2(\Omega)}^2 \leq \|w(0)\|_{L^2(\Omega)}^2 + C\int_{0}^t\|w\|_{L^2(\Omega)}^2.\nonumber
		\end{align}
		Gr\"{o}nwall's lemma ensure
		\begin{align}
		\|w\|_{L^2(\Omega)}^2 \leq \|w(0)\|_{L^2(\Omega)}^2(1 + Cte^{Ct}),\nonumber
		\end{align}
		which for $w(0) = 0$, gives $\|w\|_{L^2(\Omega)} = 0$. So, $w = 0$ a.e. in $\Omega$ and everywhere in $[0,T]$, which ensures the desired uniqueness.
		
		Now, let us show that $s \in C([0,T];H^1(\Omega))$. We consider $w_r(t) = s(t+r) - s(t)$, then $w_r(t)$ satisfies the equation \eqref{paraboliceqn} with $f = 0$, $w(0) = s_0 - s(r)$ and $\lambda s(t+r) - \lambda s(t) = \lambda w_r(t)$. By using similar argument, we obtain
		\begin{align}
		\|w_r(t)\|_{L^2(\Omega)}^2 + \frac{C(\underline{\theta},\underline{\gamma}) t}{2}\|\nabla w_r(t)\|_{L^2(\Omega)}^2 \leq C_T\left(\|s_0 - s(r)\|_{L^2(\Omega)}^2\right).\nonumber
		\end{align}
		Since we have $s \in C([0,T];L^2(\Omega))$, then $\lim_{r \to \infty}\|s(t+r)-s(t)\| = 0$ and \\ $\lim_{r \to \infty}\|\nabla s(t+r)-\nabla s(t)\| = 0$ for $t>0$.
		Therefore, we obtain $s \in C([0,T];H^1(\Omega))$.
	\end{itemize}
\qed
\end{proof}
\begin{theorem}\label{Higer_order}[High-order regularity]
	Assume $(A_3)$, $(A_4)$ to hold. Suppose $f\in H^m(\Omega)$ and  $\textbf{v}_d \in W^{m,\infty}(\Omega)$ for every $m \in \mathbb{N}$, and $s_0 \in L^2(\Omega)$. Then, for any $T>0$, $t \in [0,T]$, the solution of \eqref{paraboliceqn} satisfies the following estimate
	\begin{align}
	\|\nabla^k s\|_{L^2(\Omega)}^2 \leq \frac{C_T}{t^k}\left(\|s_0\|_{L^2(\Omega)}^2 + \|f\|_{H^k(\Omega)}^2\right) \text{ for } k=0,1,\ldots,m.\nonumber
	\end{align}	
\end{theorem}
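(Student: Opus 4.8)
The plan is to iterate the energy argument already carried out for $k=1$ in Theorem~\ref{lower_regularity}, differentiating \eqref{paraboliceqn} $k$ times in space and running a single energy estimate against a Lyapunov functional whose time weight is a polynomial in $t$ of degree $k$. The cases $k=0$ and $k=1$ are precisely the content of Theorem~\ref{lower_regularity}; so I would fix $k\in\{2,\dots,m\}$ and establish the estimate at order $k$ by the same scheme with a higher-degree weight, reading off all the lower-order bounds as by-products.

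\emph{Step 1: differentiated energy identities.} Mimicking the differentiated-equation estimate used for the gradient in the proof of Theorem~\ref{lower_regularity} (done formally on the smooth Galerkin approximants $s_m$, then passed to the limit), I would differentiate the equation $j$ times, test with $\partial^{\alpha}s_m$ for $|\alpha|=j$, sum and integrate by parts. One uses the ellipticity in $(\text{A}_3)$ for the principal term $\int_{\Omega}\nabla\partial^{\alpha}s_m\cdot D\,\nabla\partial^{\alpha}s_m$, Cauchy--Schwarz with small parameters for the commutator terms $\int_{\Omega}\nabla\partial^{\alpha}s_m\cdot(\partial^{\beta}D)\nabla\partial^{\alpha-\beta}s_m$ and for the drift terms (here $D,\textbf{v}_d\in W^{m,\infty}$ are needed), and the trace inequality for the boundary integrals on $\partial E$ coming from differentiating the Robin condition (here $(\text{A}_4)$ and $\lambda\in W^{m,\infty}(\partial E)$ enter). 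Fixing $\underline\gamma$ small relative to $\underline\theta$, exactly as between \eqref{priori_1} and \eqref{priori_2}, so that $C(\underline\gamma)-\underline\theta/2=:-c<0$, this gives for every $j=0,1,\dots,k$
\begin{align}
\tfrac{1}{2}\tfrac{d}{dt}\|\nabla^{j}s_m\|_{L^2(\Omega)}^2 \le -c\,\|\nabla^{j+1}s_m\|_{L^2(\Omega)}^2 + C\big(\|s_m\|_{H^{j}(\Omega)}^2+\|f\|_{H^{j}(\Omega)}^2\big).\nonumber
\end{align}

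\emph{Step 2: polynomial weight, Grönwall, and passage to the limit.} Generalizing \eqref{expansion}, I would set $\zeta_k(t):=\sum_{j=0}^{k}a_j\,t^{j}\,\|\nabla^{j}s_m\|_{L^2(\Omega)}^2$ with $a_0=1$ and the $a_j$ chosen recursively small enough that $j\,a_j\le 2c\,a_{j-1}$. Differentiating $\zeta_k$ and inserting Step~1, the weight-derivative terms $j a_j t^{j-1}\|\nabla^{j}s_m\|^2$ ($j\ge1$) are absorbed by the dissipation terms $-2c\,a_{j-1}t^{j-1}\|\nabla^{j}s_m\|^2$ inherited from the level-$(j-1)$ identity; the top dissipation term at level $k$ is discarded; and since $t\le T$ the remaining contributions $\sum_j a_j t^{j}C(\|s_m\|_{H^{j}}^2+\|f\|_{H^{j}}^2)$ are bounded by $C_T\big(\zeta_k(t)+\|f\|_{H^{k}(\Omega)}^2\big)$, using $t^{j}\le T^{\,j-i}t^{i}$ for $i\le j$. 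Hence $\zeta_k'(t)\le C_T\big(\zeta_k(t)+\|f\|_{H^{k}(\Omega)}^2\big)$, and Grönwall's lemma gives $\zeta_k(t)\le C_T\big(\zeta_k(0)+\|f\|_{H^{k}(\Omega)}^2\big)=C_T\big(\|s_0\|_{L^2(\Omega)}^2+\|f\|_{H^{k}(\Omega)}^2\big)$; the $j=k$ summand then yields $\|\nabla^{k}s_m\|_{L^2(\Omega)}^2\le \frac{C_T}{t^{k}}\big(\|s_0\|_{L^2(\Omega)}^2+\|f\|_{H^{k}(\Omega)}^2\big)$ uniformly in $m$. Passing to the limit (weak lower semicontinuity of the norms along the subsequence of the proof of Theorem~\ref{lower_regularity}, now bounded in $L^2_{\mathrm{loc}}((0,T];H^{k}(\Omega))$) transfers the inequality to $s$, and running this for each $k\in\{0,\dots,m\}$ produces the stated family of estimates.

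\emph{Main obstacle.} The delicate part is Step~1: each spatial derivative of the Robin condition $(-D\nabla s+\textbf{v}_d s)\cdot\textbf{n}=\lambda s$ on $\partial E$ generates boundary integrals $\int_{\partial E}\partial^{\alpha}(\lambda s_m)\,\partial^{\alpha}s_m$ that must be reabsorbed via the trace inequality into $\varepsilon\|\nabla\partial^{\alpha}s_m\|_{L^2(\Omega)}^2+C_\varepsilon\|\partial^{\alpha}s_m\|_{L^2(\Omega)}^2$ without destroying the sign $C(\underline\gamma)-\underline\theta/2<0$; this forces a single choice of $\underline\gamma$ valid at all orders $j=0,\dots,m$ simultaneously, and it is here that the $C^2$ (or exterior-sphere) regularity of $\partial\Omega\cup\partial G$ and $\partial E$, together with the $W^{m,\infty}$ regularity of $D$, $\textbf{v}_d$ and $\lambda$, is genuinely used. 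The rest is the bookkeeping already exhibited in the proof of Theorem~\ref{lower_regularity}.
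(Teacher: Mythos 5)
Your proposal is correct and follows essentially the same route as the paper: differentiate the equation $j$ times, run the energy estimate using $(\text{A}_3)$, $(\text{A}_4)$, Cauchy--Schwarz with small parameters and the trace inequality to obtain $\frac{1}{2}\frac{d}{dt}\|\nabla^{j}s\|_{L^2(\Omega)}^2 \le -c\|\nabla^{j+1}s\|_{L^2(\Omega)}^2 + C\bigl(\|s\|_{H^{j}(\Omega)}^2+\|f\|_{H^{j}(\Omega)}^2\bigr)$, and then apply Gr\"{o}nwall to a polynomially time-weighted sum of these norms (the paper takes the explicit weights $(C(\underline{\theta},\underline{\gamma})t)^k/(2^k k!)$, which satisfy precisely your recursion $j a_j \le 2c\, a_{j-1}$). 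The only cosmetic difference is that the paper organizes the absorption of the lower-order terms as an induction on $m$ via the hypothesis $\|s\|_{H^{k-1}(\Omega)}^2 \le C_T t^{-(k-1)}\bigl(\|s_0\|_{L^2(\Omega)}^2+\|f\|_{H^{k-1}(\Omega)}^2\bigr)$, whereas you absorb them directly into $\zeta_k$ using $t^{j}\le T^{\,j-i}t^{i}$; both are valid.
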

\begin{proof}
	We use the method of induction on $m \in \mathbb{N}$, using the fact that we have done the first case $m=1$ of induction mathematically in Theorem \ref{lower_regularity}. 
	As a notation for derivatives, we define
	\begin{align}
	\|\nabla^k s\|_{L^2(\Omega)}^2 := \sum_{|\alpha|\leq k}\|\partial_x^{\alpha}s\|_{L^2(\Omega)}^2.\nonumber
	\end{align}

	Now, taking the $k-$order derivative with respect to $x$ for $k\in \mathbb{N}$ which is denoted by $\partial_x^{\alpha}$ of the equation \eqref{paraboliceqn}, multiplying by $\partial_x^\alpha s$ and integrating the results by parts together with using Green's theorem for the equation, we obtain
	\begin{align}
	\frac{1}{2}\frac{d}{dt}\|\partial_x^\alpha s(t)\|_{L^2(\Omega)}^2 + \int_{\Omega}\nabla\partial_x^{\alpha}s\cdot \sum_{j=0}^{k}\sum_{|\beta| = j,\beta+\gamma = \alpha}\mybinom{\alpha}{\beta}\partial_x^{\beta}D\nabla\partial_x^{\gamma}sdx \nonumber\\-\int_{\Omega}\nabla\partial_x^{\alpha}s\cdot \sum_{j=0}^{k}\sum_{|\beta| = j,\beta+\gamma = \alpha}\mybinom{\alpha}{\beta}\partial_x^{\beta}\textbf{v}_d\partial_x^{\gamma}sdx + \int_{\partial E}\partial_x^{\alpha}s\lambda\partial_x^{\alpha}sd\sigma(E)\nonumber\\+\int_{\partial E}\partial_x^{\alpha}(\lambda s)\cdot \partial_x^{\alpha}sd\sigma(E)
	=\int_{\Omega}\partial_x^{\alpha}f\partial_x^{\alpha}sdx,\nonumber
	\end{align}
	and thus
	\begin{align}\label{high_derivative}
	\frac{1}{2}\frac{d}{dt}\|\partial_x^\alpha s(t)\|_{L^2(\Omega)}^2 =- \int_{\Omega}\nabla\partial_x^{\alpha}s\cdot \sum_{j=0}^{k}\sum_{|\beta| = j,\beta+\gamma = \alpha}\mybinom{\alpha}{\beta}\partial_x^{\beta}D\nabla\partial_x^{\gamma}sdx\nonumber\\+\int_{\Omega}\nabla\partial_x^{\alpha}s\cdot\sum_{j=0}^{k}\sum_{|\beta| = j,\beta+\gamma = \alpha}\mybinom{\alpha}{\beta}\partial_x^{\beta}\textbf{v}_d\partial_x^{\gamma}sdx -\int_{\partial E}\partial_x^{\alpha}s\lambda\partial_x^{\alpha}sd\sigma(E)\nonumber\\ - \int_{\partial E}\partial_x^{\alpha}(\lambda s)\cdot \partial_x^{\alpha}sd\sigma(E)
	+\int_{\Omega}\partial_x^{\alpha}f\partial_x^{\alpha}sdx.
	\end{align}
    Denote
	\begin{align}
	A&:=- \int_{\Omega}\nabla\partial_x^{\alpha}s\cdot \sum_{j=0}^{k}\sum_{|\beta| = j,\beta+\gamma = \alpha}\mybinom{\alpha}{\beta}\partial_x^{\beta}D\nabla\partial_x^{\gamma}sdx\nonumber\\
	&= -\int_{\Omega}\nabla\partial_x^{\alpha}s\cdot D \nabla\partial_x^{\alpha}sdx - \int_{\Omega}\nabla\partial_x^{\alpha}s\cdot \sum_{j=1}^{k}\sum_{|\beta| = j,\beta+\gamma = \alpha}\mybinom{\alpha}{\beta}\partial_x^{\beta}D\nabla\partial_x^{\gamma}sdx.\nonumber
	\end{align}
	We can estimate $|A|$ from above, it follows
	\begin{align}\label{est_A}
	|A| \leq -\underline{\theta}\|\nabla\partial_x^{\alpha}s\|_{L^2(\Omega)}^2 + C\|D\|_{1,\infty}\left(\varepsilon_1\|\nabla \partial_x^{\alpha}s\|_{L^2(\Omega)}^2 + \frac{1}{\varepsilon_1}\|s\|_{H^{k-1}(\Omega)}^2\right)\nonumber\\
	\leq \frac{-\underline{\theta}}{2}\|\nabla\partial_x^{\alpha}s \|_{L^2(\Omega)}^2 + C\|s\|_{H^{k-1}(\Omega)}^2,
	\end{align}
	where we choose $\varepsilon_1 = \underline{\theta}(4C\|D\|_{m,\infty})^{-1}$.
	Set
	\begin{align}
	B:=\int_{\Omega}\nabla\partial_x^{\alpha}s\cdot\sum_{j=0}^{k}\sum_{|\beta| = j,\beta+\gamma = \alpha}\mybinom{\alpha}{\beta}\partial_x^{\beta}\textbf{v}_d\partial_x^{\gamma}sdx,\nonumber
	\end{align}
	and obtain the upper bound
	\begin{align}\label{est_B}
	|B|\leq C\|\textbf{v}_d\|_{m,\infty}\left(\varepsilon_2 \|\nabla \partial_x^{\alpha}s\|_{L^2(\Omega)}^2 + \frac{1}{\varepsilon_2}\|\partial_x^{\alpha}s\|_{H^{k-1}(\Omega)}^2\right).
	\end{align}
	Now, let us label the third and fourth terms in the right hand side of \eqref{high_derivative} as follow
	\begin{align}
	\tilde{C}:=-\int_{\partial E}\partial_x^{\alpha}s\lambda\partial_x^{\alpha}sd\sigma(E) - \int_{\partial E}\partial_x^{\alpha}(\lambda s)\cdot \partial_x^{\alpha}sd\sigma(E).\nonumber
	\end{align}
	Using the assumptions on $\lambda$ together with applying Cauchy's inequality, trace inequality for $\tilde{C}$, we have the following estimate:
	\begin{align}\label{est_C}
	\tilde{C} \leq \underline{\gamma}\|\partial_x^{\alpha}s\|_{L^2(\partial E)}^2 + \|\lambda\|_{m,\infty}\|\partial_x^\alpha s\|_{L^2(\partial E)}^2
	\leq C(\underline{\gamma})\left(\|\nabla\partial_x^{\alpha}s\|_{L^2(\Omega)}^2 + \|\partial_x^{\alpha}s\|_{L^2(\Omega)}^2\right).
	\end{align}
	Finally, we estimate the last term of \eqref{high_derivative}, by using Cauchy's inequality, we obtain
	\begin{align}\label{est_D}
	\int_{\Omega}\partial_x^{\alpha}f\partial_x^{\alpha}sdx 
	\leq \frac{1}{2}\left(\|\partial_x^{\alpha}f\|_{L^2(\Omega)}^2 + \|\partial_x^{\alpha}s\|_{L^2(\Omega)}^2\right).
	\end{align}
	Combining \eqref{est_A}-\eqref{est_D} and choosing $\varepsilon_2 = \underline{\theta}(4C\|\textbf{v}_d\|_{1,\infty})^{-1}$, we have the following estimate
	\begin{align}
	\frac{1}{2}\frac{d}{dt}\|\partial_x^\alpha s(t)\|_{L^2(\Omega)}^2 \leq \frac{-\underline{\theta}}{2}\|\nabla\partial_x^{\alpha}s \|_{L^2(\Omega)}^2 + C\|s\|_{H^{k-1}(\Omega)}^2 + C\|\partial_x^\alpha s\|_{H^{k-1}(\Omega)}^2 \nonumber\\+ C(\underline{\gamma})\|\nabla \partial_x^{\alpha}s\|_{L^2(\Omega)}^2 + \|f\|_{H^k(\Omega)}^2.\nonumber
	\end{align}
	Now, summing all of first-order derivatives, we obtain
	\begin{align}\label{nablak_est}
	\frac{1}{2}\frac{d}{dt}\|\nabla^k s(t)\|_{L^2(\Omega)}^2 \leq \left(C(\underline{\gamma}) - \frac{\underline{\theta}}{2}  \right)\|\nabla^{k+1}s\|_{L^2(\Omega)}^2 + C\left(\|f\|_{H^k(\Omega)}^2 + \|s\|_{H^k(\Omega)}^2\right).
	\end{align}
	Now, we aim to find $s \in C([0,T],H^{m-1}(\Omega))$ using the induction hypothesis under the assumptions $f \in H^{m-1}(\Omega)$ and $D,\lambda,\textbf{v}_d \in W^{m,\infty}(0,T;\Omega)$. Using the same argument as in the case $m=1$, we define
	\begin{align}\label{expansion_2}
	\zeta_2(t) := \sum_{k=1}^{n}\frac{(C(\underline{\theta},\underline{\gamma})t)^k}{2^kk!}\|\nabla^k s\|_{L^2(\Omega)}^2.
	\end{align} 
	Taking the derivative of \eqref{expansion_2} with respect to $t$, we obtain
	\begin{align}
	\zeta'_2(t) &= \sum_{k=1}^m\frac{(C(\underline{\theta},\underline{\gamma}))^kt^{k-1}}{2^k(k-1)!}\|\nabla^ks\|_{L^2(\Omega)}^2 + \sum_{k=0}^{m}\frac{(C(\underline{\theta},\underline{\gamma}))^k}{2^kk!}\frac{d}{dt}\|\nabla^ks\|_{L^2(\Omega)}^2\nonumber\\
	&:= G_1+G_2.\nonumber
	\end{align}	
	\begin{align}\label{G_2}
	G_2 &\leq \sum_{k=0}^m\frac{(C(\underline{\theta},\underline{\gamma})t)^k}{2^kk!}\left(-C(\underline{\theta},\underline{\gamma})\|\nabla^{k+1}s\|_{L^2(\Omega)}^2 + C\left(\|f\|_{H^k(\Omega)}^2 + \|s\|_{H^k(\Omega)}^2\right)\right)\nonumber\\
	&=-2\sum_{k=0}^m\frac{(C(\underline{\theta},\underline{\gamma}))^{k+1}t^k}{2^{k+1}k!}\|\nabla^{k+1}s\|_{L^2(\Omega)}^2 \nonumber\\&+ C\sum_{k=0}^{m}\frac{(C(\underline{\theta},\underline{\gamma}))^k}{2^kk!}\left(\|f\|_{H^k(\Omega)}^2 + \|s\|_{H^k(\Omega)}^2\right)\nonumber\\
	&\leq -2G_1 - \frac{2(C(\underline{\theta},\underline{\gamma}))^{m+1}t^m}{2^{m+1}m!}\|\nabla^{m+1}s\|_{L^2(\Omega)}^2 \nonumber\\&+ C\sum_{k=0}^{m}\frac{(C(\underline{\theta},\underline{\gamma}))^k}{2^kk!}\left(\|f\|_{H^k(\Omega)}^2 + \|s\|_{H^k(\Omega)}^2\right).
	\end{align}
	On the other hand, the induction hypothesis gives the following inequality
	\begin{align}\label{induction_hypo}
	\|s\|_{H^{k-1}(\Omega)}^2 \leq \frac{C_T}{t^{k-1}}\left(\|f\|_{H^{k-1}(\Omega)}^2+\|s_0\|_{L^2(\Omega)}^2\right).
	\end{align}
	Combining \eqref{G_2} and \eqref{induction_hypo}, we obtain
	\begin{align}
	\zeta'_2(t) &\leq C\sum_{k=0}^{m}\frac{(C(\underline{\theta},\underline{\gamma}))^k}{2^kk!}\left(\|f\|_{H^k(\Omega)}^2 + \|s\|_{H^k(\Omega)}^2\right)\nonumber\\
	&\leq C_T\left(\|f\|_{H^m(\Omega)}^2 + \sum_{k=0}^m\frac{(C(\underline{\theta},\underline{\gamma})t)^k}{2^kk!}\left[\|\nabla^k s\|_{L^2(\Omega)}^2 + \|s\|_{H^{k-1}(\Omega)}^2\right]\right)\nonumber\\
	&\leq C_T\left(\|f\|_{H^m(\Omega)}^2 + \zeta_2(t) +\sum_{k=0}^m\frac{(C(\underline{\theta},\underline{\gamma})t)^k}{2^kk!}\frac{C_T}{t^{k-1}}\left[\|f\|_{H^{k-1}(\Omega)}^2 + \|s_0\|_{L^2(\Omega)}^2\right] \right)\nonumber\\
	&\leq C_T\left(\|f\|_{H^m(\Omega)}^2 + \|s_0\|_{L^2(\Omega)}^2 + \zeta_2(t)\right).\nonumber
	\end{align}
	Gr\"{o}nwall's inequality yields
	\begin{align}
	\zeta_2(t) \leq C_T\left(\|f\|_{H^m(\Omega)}^2 + \|s_0\|_{L^2(\Omega)}^2 + \zeta_2(0)\right) \leq C_T\left(\|f\|_{H^m(\Omega)}^2 + \|s_0\|_{L^2(\Omega)}^2\right).\nonumber
	\end{align}
	The bound on $\zeta_2(t)$ gives the following estimate
	\begin{align}
	\|\nabla^m s\|_{L^2(\Omega)}^2 \leq \frac{C_T}{(C(\underline{\theta},\underline{\gamma})t)^m},\nonumber
	\end{align}
	which completes the induction proof. \qed
\end{proof}
\begin{remark}\label{rm_C1}
From Theorem \ref{Higer_order}, for $m = 3$, $\Omega \subset \mathbb{R}^d$ with $d=2$, there exists a unique solution $s \in C([0,T]; C^1(\Omega))$ and $s' \in L^2(0,T;H^{-1}(\Omega))$ that solves the equation \eqref{paraboliceqn}. 

By the same arguments as in Theorem \ref{lower_regularity}, this also implies that $s \in C([0,T];H^m(\Omega))$. On the other hand, in our model, we consider our domain in $\Omega \subset \mathbb{R}^d$ with $d = 2$. Moreover, assume $\Omega$ satisfies the strong locally Lipschitz condition (cf. \cite{Adams03}, Theorem 4.12), taking $m=3$, hence $H^3(\Omega)$ compact embedding into $C^1(\Omega)$, i.e. $H^3(\Omega) \subset C^1(\Omega)$. As a conclusion, we obtain $s \in C([0,T]; C^1(\Omega))$. This property ensures that the smoke concentration $s$ is Lipschitz with respect to the space variable -- a fact needed to handle the well-posedness of our  SDEs.
\end{remark}

\section{Discussion}
\label{sec:discussion}

In this chapter, we presented various models aimed at modelling crowds of mixed populations (active and passive) moving inside heterogeneous environments.

Based on our numerical experiments, we observed the impact of passive agents on the residence times of the population and conclude that the lack of environment knowledge can have a substantial impact on the evacuation. 
Additionally, we notice that the size of the obstacles and doors have a significant influence on the overall dynamics.

While the presence of passive agents increases the evacuation time, we speculate that by manipulating the spacial distribution of active particles, it is possible to optimize the residence time of the passive agents. We plan to investigate these aspects in a forthcoming publication.

From the mathematical point of view, the situation becomes a lot more challenging when there is a feedback mechanism between the agent-based dynamics and the environment (fire, smoke, geometry). Formulating this relationship mathematically would allow for an optimization approach, eventually in a multiscale setting.
The main advantage of such a mathematical framework would be to contribute to an intelligent design of building interiors and to provide a basis for smart evacuation signaling systems.

\bibliographystyle{spmpsci}
\bibliography{literature}
\end{document}